\theoremstyle{definition}
\newtheorem{definition}{Definition}[section]
\newtheorem{remark}[definition]{Remark}
 \theoremstyle{plain}
\newtheorem{theorem}{Theorem}
\newtheorem{lemma}[definition]{Lemma}
\newtheorem{corollary}[definition]{Corollary}
\newtheorem{proposition}[definition]{Proposition}
\title[A stable Meyer-It\^o formula]{A Meyer-It\^o Formula for Stable Processes via Fractional Calculus}
\author{Alejandro Santoyo Cano}
\author{Ger\'onimo Uribe Bravo}
\address{Instituto de Matem\'aticas\\ 
Universidad Nacional Aut\'onoma de M\'exico\\
\'Area de la Investigaci\'on Cient\'ifica, Circuito Exterior, Ciudad Universitaria\\ Coyoac\'an, 04510. Ciudad de M\'exico, M\'exico }
\newcommand{\RR}{\mathbb{R}} 
\newcommand{\NN}{\mathbb{N}} 
\newcommand{\ZZ}{\mathbb{Z}}
\newcommand{\BB}{\mathcal{B}}
\newcommand{\FF}{\mathcal{F}}
\newcommand{\GG}{\mathcal{G}}
\newcommand{\LL}{\mathcal{L}}
\newcommand{\EE}{\mathbb{E}}
\newcommand{\PP}{\mathbb{P}}
\DeclareMathOperator{\ii}{1{\hskip -2.5 pt}l}
\DeclareMathOperator{\sgn}{sgn} 
\newcommand{\stabs}{S_{\alpha}\left(  \beta, \sigma \right)}
\newcommand{\stabc}{S_{\alpha}\left(  c_-, c_+ \right)} 
\newcommand{\ClassC}{\mathcal{C}^{\alpha,c_-,c_+}}
\renewcommand{\SS}{\mathcal{S}}
\newcommand{\Phipr}{\Phi^{\prime}}
\DeclareMathOperator{\leb}{Leb}
\newcommand{\bline}{\begin{linenomath*}}
\newcommand{\eline}{\end{linenomath*}}
\subjclass[2010]{
26A33
, 60G18
, 60G52
}
\thanks{GUB's research is supported by
UNAM-DGAPA-PAPIIT 
grant IN114720. 
ASC's research is supported by 
CONACyT PhD scholarship CVU 486052. 
} 
\begin{document}
\begin{abstract}
The infinitesimal generator of a one-dimensional strictly $\alpha$-stable process can be represented as a weighted sum of (right and left) Riemann-Liouville fractional derivatives of order $\alpha$ and one obtains the fractional Laplacian in the case of symmetric stable processes. 
Using this relationship, we compute the inverse of the infinitesimal generator on 
Lizorkin space, 
from which we can recover the potential if $\alpha \in (0,1)$ and the recurrent potential if $\alpha \in (1,2)$. The inverse of the infinitesimal generator is expressed in terms of a linear combination of (right and left) Riemann-Liouville fractional integrals of order $\alpha$. One can then state a class of functions that give semimartingales when applied to strictly stable processes and state a Meyer-It\^o theorem with a non-zero (occupational) local time term, providing a generalization of the Tanaka formula given by Tsukada \cite{MR3964382}. This result is used to find a Doob-Meyer (or semimartingale) decomposition for $|X_t - x|^{\gamma}$ with $X$ a recurrent strictly stable process of index $\alpha$ and  $\gamma\in (\alpha-1,\alpha)$, generalizing the work of Engelbert and Kurenok \cite{MR3943123} to the asymmetric case. 
\end{abstract}
\maketitle

\section{Introduction and statement of the results}
One might argue that the connection between fractional calculus and stable processes 
can be further strengthened 
even though links between the fractional Laplacian and symmetric stable processes are often alluded to (cf. \cite{MR3971272,MR4043885} with Remark \ref{remark:frac_lapl}). The oldest references that relate fractional calculus and stable random variables are the seminal work of Feller \cite{MR0052018}, which uses fractional calculus to compute a series for stable densities and the articles of Gorenflo and Mainardi (cf. \cite{MR1648257,GM07}), which identify a correspondence between stable characteristic functions and the Fourier transform of fractional derivatives. 
More recent references are the book of Meerschaert and Sikorskii \cite{MR3971272} and the article of Kolokoltsov \cite{MR3377407}, 
where the infinitesimal generator of a stable process and various transformations are written in terms of different types of fractional derivatives. 

One objective of this work is to present a natural application of fractional calculus to (one-dimensional and asymmetric) stable processes by inverting their infinitesimal generator. 
The inversion is valid 
in the so called Lizorkin space. 
Multiple consequences include a generalization of the celebrated Tanaka formula for Brownian motion into the stable setting as first obtained by Tsukada \cite{MR3964382}. 
This will follow from constructing a function which the  generator transforms into the $\delta$ distribution. 
More generally, one can define a class of functions whose image under the generator is a signed measure. One obtains semimartingales when applying functions of this class to stable processes; the semimartingale decomposition gives us a version of Meyer-It\^o formula for discontinuous semimartingales (cf. Protter \cite[IV.7]{MR2020294}) which features a non-zero local time term. 
This allows for a concrete semimartingale decompositions for power functions applied to stable processes which were recently obtained for symmetric stable processes in Engelbert and Kurenok \cite{MR3943123}. 

This work is based on several results, from both probability theory and fractional calculus, 
so let us first state the basic elements we will need. 

\begin{definition}[Strictly stable process]
A L\'evy process $\left(X_t\right)_{t\geq 0}$ 
is called a strictly stable process with index of stability $\alpha \in (0,2)\setminus \left\{1\right\}$  if, for any $c>0$: 
$X_{ct}\stackrel{d}{=} c^{1/\alpha}X_t$
\end{definition}

We will only consider strictly stable processes in this paper, excluding the cases when the index is $1$ or $2$, corresponding to the symmetric Cauchy process and Brownian motion, which have been studied with different techniques. Stable processes belong to the class of L\'evy processes (c.f. \cite{MR1406564}) and their properties are deferred to the next section.

According to \cite{MR1739520}(Chapter 3) there exist some constants $c_-, c_+ \geq 0$, not both zero, such that the L\'evy measure $\nu$ of $X$, which describes the jumps of $X$ and is given by
\[
\nu(A)=\mathbb{E}(\# \{t\in [0,1]: X_{t}-X_{t-}\in A\}) , 
\]satisfies:
\begin{linenomath*}
\begin{equation*} 
\nu(dh)	 = 
\left( c_{-} \ii_{\left\{ h<0 \right\}} + c_{+} \ii_{\left\{ h>0 \right\} } \right) \frac{dh}{\left|h\right|^{\alpha+1}}. 
\end{equation*}
\end{linenomath*}
Stable processes can then be constructed using a Poisson random measure $N$ with intensity $ds\, \nu(dh)$ if $\alpha\in (0,1)$ or the compensated Poisson random measure  $\tilde{N}$ when $\alpha\in (1,2)$ by means of the 
L\'evy-It\^o decomposition: 
\bline
\begin{equation*}
X_t =X_0 + \begin{cases}
\displaystyle 
 \int_0^t \int_{\RR_0} h N(ds,dh)& \text{ if }\alpha\in (0,1)
 \\ \displaystyle 
 \int_0^t \int_{\RR_0} h \tilde{N}(ds,dh)&\text{ if }\alpha\in (1,2)
\end{cases}.
\end{equation*}
\eline
In fact, note that in the recurrent case when $\alpha\in (1,2)$, $X_t$ is integrable for any $t$ and $X$ is a martingale (whenever $X_0$ is deterministic). In both cases, we will write that $X \sim \stabc$ when we refer to a strictly stable process with such parameters. 
The infinitesimal generator $\mathcal{L}$ of $X$ can be defined as the derivative at zero of the semigroup on an adequate class of functions. Indeed, recall that if $\phi:\RR\to\RR$ belongs to the Schwartz space $\mathcal{S}(\RR)$ of rapidly decreasing functions, we have
\[
	\mathcal{L}\phi(x)
	:=\left.\frac{\partial }{\partial t}\right|_{t=0} \mathbb{E}(\phi(x+X_t))
	=\begin{cases}
	\displaystyle 
	\int_{\RR_0} [\phi(x+y)-\phi(x)]\, \nu(dy)&\alpha\in(0,1)\\
	\displaystyle 
	\int_{\RR_0} [\phi(x+y)-\phi(x)-y\phi'(x)]\, \nu(dy)&\alpha\in(1,2)
	\end{cases}, 
\]as in \cite[I.2]{MR1406564}. 

The behavior and further properties of the process $X$ differ substantially whether $\alpha \in (0,1)$ or $\alpha \in (1,2)$, so they will be studied separately (as above, there are many differences in these cases, such as the transient/recurrent dichotomy, the polar/non-polar character of zero, or the bounded vs unbounded variation of the sample paths). 
Nevertheless, in both cases we get a representation of their infinitesimal generator and its inverse in terms of fractional operators.
The fractional operators we will use are the Riemann-Liouville's, these definitions and further properties can be consulted in \cite[Ch. 2]{MR1347689}. 

\begin{definition}[Riemann-Liouville fractional operators]\label{def:RLFO}
Let $\alpha \geq 0$ and $\varphi \in \mathcal{S}(\RR)$. Then, the left and right Riemann-Liouville fractional operators of order $\alpha$ applied to $\varphi$ are defined in three cases: 
\begin{itemize}
    \item For $\alpha = 0$ we get the identity operator
    \bline
    \begin{equation*}
        W_-^\alpha \varphi\left(x\right) = W_+^{\alpha} \varphi\left(x\right) := \varphi\left(x\right).
    \end{equation*}
    \eline
    \item For $\alpha > 0$, the (left and right) Riemann-Liouville fractional integrals are given by
    \bline
    \begin{eqnarray*}
    W_{-}^\alpha\varphi(x)
    &:=& \frac{1}{\Gamma\left(\alpha\right)} \int_{-\infty}^{x}\left(x-t\right)^{\alpha-1}\varphi\left(t\right)dt\quad\text{and}\\
    W_+^\alpha\varphi(x)
    &:=& \frac{1}{\Gamma\left(\alpha\right)} \int_{x}^{\infty}\left(t-x\right)^{\alpha-1}\varphi\left(t\right)dt.\\
    \end{eqnarray*}
    \eline
    \item For $n-1 < \alpha \leq n$, with $n \in \NN$, the Riemann-Liouville fractional derivatives are given by
    \bline
    \begin{eqnarray*}
    W_-^{-\alpha}\varphi(x)
    &:=& 
    \frac{d^n}{dx^n} W_-^{n-\alpha}\varphi(x) \quad\text{and} 
    \\
    W_+^{-\alpha}\varphi(x)
    &:=& 
    (-1)^n \frac{d^n}{dx^n} W_+^{n-\alpha}\varphi(x). 
    \end{eqnarray*}
    \eline
\end{itemize}
\end{definition}

\begin{remark}
\begin{enumerate}
\item In fact, both the fractional integral and derivative are defined under $L_p$ assumptions depending on $\alpha$, but we restrict to Schwartz space so that their Fourier transforms are well defined. 
\item If $\alpha > 0$, we will use the following notation for the fractional integrals and derivatives:
\begin{linenomath*}
\begin{equation*}
I^{\alpha}_\pm := W^{\alpha}_\pm\quad\text{and}\quad D^{\alpha}_\pm := W^{-\alpha}_\pm.
\end{equation*}
\end{linenomath*}
\item If $\alpha \in \NN$, then the left fractional operators $I_-^\alpha$ and $D_-^\alpha $, correspond to the iterated integral and classical differential operators of order $\alpha$. Fractional operators can be regarded as ``nice'' interpolations between their corresponding integer neighbors.
\item On an adequate domain (the so called Lizorkin space, to be introduced), 
they satisfy the group property with respect to composition:  for $\alpha,\beta\in\RR$, 
\bline
\begin{equation*}
W_-^\alpha\circ W_-^{\beta} = W_-^{\alpha+\beta} \quad\text{and}\quad W_+^{\alpha}\circ W_+^{\beta} = W_+^{\alpha+\beta}.
\end{equation*}
\eline Hence, both the left and right fractional operators of order $\alpha$ can be inverted by fractional operators of order $-\alpha$. 
\item Again on Lizorkin space, we have that $W^\beta\phi\to W^\alpha \phi$ as $\beta\to\alpha$, as follows from the expressions of the Fourier transforms in Proposition \ref{prop:FTfracop}. 
\end{enumerate}
\end{remark}
With some algebraic manipulations, the infinitesimal generator of a strictly $\alpha$-stable process can be written as a linear combination of left and right Riemann-Liouville fractional derivatives of order $\alpha$. For a detailed proof see for example the article of Kolokoltsov \cite{MR3377407} (Section 2), or the book of Meerschaert and Sikorskii \cite{MR3971272} (Section 2.2). 

\begin{proposition}[Infinitesimal generator]\label{prop: InfGen_FC}
Let $\alpha \in (0,2)\setminus \left\{1\right\}$, $c_-, c_+ \geq 0$, not both zero. If $X\sim \stabc$, then the domain of the infinitesimal generator $\LL$ of $X$ contains $\mathcal{S}(\RR)$. For $\varphi \in \mathcal{S}(\RR)$, we have: 
\bline
\begin{equation*}
\mathcal{L}\varphi\left(x\right) = M_- D_-^{\alpha}\varphi\left(x\right)  +  M_+ D_+^{\alpha}\varphi\left(x\right),
\end{equation*}
\eline
where $M_\pm =  c_\pm \Gamma(-\alpha)$. 
\end{proposition}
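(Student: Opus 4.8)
The plan is to take as given the integral representation of $\LL\varphi$ recorded in the display immediately preceding the statement --- this is \cite[I.2]{MR1406564}, and it already contains the assertion $\mathcal{S}(\RR)\subseteq\mathrm{Dom}(\LL)$ --- and to turn it into the claimed linear combination of Riemann--Liouville derivatives by (repeated) integration by parts. Writing $\nu(dh)=\big(c_-\ii_{\{h<0\}}+c_+\ii_{\{h>0\}}\big)|h|^{-\alpha-1}\,dh$, I would split $\LL\varphi(x)$ into an ``$h>0$'' piece carrying the weight $c_+$ and an ``$h<0$'' piece carrying $c_-$. The substitution $h\mapsto -h$ sends the $h<0$ piece to an integral over $(0,\infty)$ of the same form but reflected, interchanging the left and right operators, so it is enough to show the $h>0$ piece equals $c_+\Gamma(-\alpha)\,D_+^\alpha\varphi(x)$; the $h<0$ piece then produces $c_-\Gamma(-\alpha)\,D_-^\alpha\varphi(x)$ in the same way.

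For $\alpha\in(0,1)$ the $h>0$ piece is $c_+\int_0^\infty[\varphi(x+h)-\varphi(x)]\,h^{-\alpha-1}\,dh$; integrating by parts with $dv=h^{-\alpha-1}\,dh$, $v=-h^{-\alpha}/\alpha$, the boundary term at $+\infty$ vanishes because $\varphi$ is bounded and $h^{-\alpha}\to0$, and at $0$ because $\varphi(x+h)-\varphi(x)=O(h)$ with $\alpha<1$, leaving $\tfrac{c_+}{\alpha}\int_0^\infty h^{-\alpha}\varphi'(x+h)\,dh$. Since $D_+^\alpha\varphi(x)=-\tfrac{1}{\Gamma(1-\alpha)}\int_0^\infty h^{-\alpha}\varphi'(x+h)\,dh$ (by the substitution $t=x+h$ and differentiation under the integral, legitimate for $\varphi\in\mathcal{S}(\RR)$) and $\Gamma(1-\alpha)=-\alpha\Gamma(-\alpha)$, this is exactly $c_+\Gamma(-\alpha)\,D_+^\alpha\varphi(x)$. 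For $\alpha\in(1,2)$ the $h>0$ piece is $c_+\int_0^\infty[\varphi(x+h)-\varphi(x)-h\varphi'(x)]\,h^{-\alpha-1}\,dh$ and I integrate by parts twice: in the first step the Taylor bound $\varphi(x+h)-\varphi(x)-h\varphi'(x)=O(h^2)$ (with $\alpha<2$, at $0$) and the linear growth of that bracket (with $\alpha>1$, at $\infty$) kill the boundary terms and produce $\tfrac{c_+}{\alpha}\int_0^\infty h^{-\alpha}[\varphi'(x+h)-\varphi'(x)]\,dh$; in the second step $\varphi'(x+h)-\varphi'(x)=O(h)$ near $0$ and boundedness of $\varphi'$ at $\infty$ again kill the boundary terms, yielding $-\tfrac{c_+}{\alpha(1-\alpha)}\int_0^\infty h^{1-\alpha}\varphi''(x+h)\,dh$, which equals $c_+\Gamma(-\alpha)\,D_+^\alpha\varphi(x)=\tfrac{c_+\Gamma(-\alpha)}{\Gamma(2-\alpha)}\int_0^\infty h^{1-\alpha}\varphi''(x+h)\,dh$ by $\Gamma(2-\alpha)=-\alpha(1-\alpha)\Gamma(-\alpha)$.

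A second, more Fourier-analytic route is also available: for $\varphi\in\mathcal{S}(\RR)$ the function $\LL\varphi$ is integrable, so Fubini gives $\widehat{\LL\varphi}(\xi)=\psi(\xi)\widehat\varphi(\xi)$ where $\psi$ is the characteristic exponent of $X$, and a contour evaluation of $\int_{\RR_0}(e^{i\xi h}-1-i\xi h\,\ii_{\{\alpha>1\}})\,\nu(dh)$ gives $\psi(\xi)=M_+(-i\xi)^\alpha+M_-(i\xi)^\alpha$ in the principal branch; comparing with the Fourier multipliers of $D_\pm^\alpha$ from Proposition \ref{prop:FTfracop} and invoking injectivity of the Fourier transform on $\mathcal{S}(\RR)$ closes the argument. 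In either route there is no idea beyond bookkeeping: for the integration-by-parts proof the only care needed is in verifying that every boundary term vanishes and in matching the $\Gamma$-function identities; for the Fourier proof it is fixing the branch of $(\mp i\xi)^\alpha$ consistently between the stable exponent and the Riemann--Liouville multipliers, together with justifying the contour deformation. I would write out the integration-by-parts route in full, since it never leaves the real line and uses nothing beyond the representation of $\LL$ already displayed.
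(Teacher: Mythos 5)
Your argument is correct and follows essentially the same route as the paper, which does not prove this proposition directly but instead cites Kolokoltsov and Meerschaert--Sikorskii for the ``generator form'' of $D_\pm^{\alpha}$ and observes that the identity follows by matching that form against the jump-integral representation of $\LL$. Your integration-by-parts computation (with the boundary terms and the $\Gamma$-identities $\Gamma(1-\alpha)=-\alpha\Gamma(-\alpha)$ and $\Gamma(2-\alpha)=-\alpha(1-\alpha)\Gamma(-\alpha)$ checked correctly) is precisely the content of those cited references, so you have simply made explicit the step the paper outsources.
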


\begin{remark}\label{remark:frac_lapl}
This representation is consistent with the case $\alpha = 2$ and $c_- = c_+$, which corresponds to the Brownian motion, and its infinitesimal generator is the Laplacian $\Delta$. In the case $\alpha \in (0,2)\setminus \left\{1\right\}$ and $c_- = c_+$,  corresponding to a symmetric strictly $\alpha$-stable process, the infinitesimal generator is given by the fractional Laplacian $-(-\Delta)^{\alpha/2}$.
\end{remark}

The semigroup property of fractional operators is no longer enough to invert the infinitesimal generator, since we are lacking an expresion for the composition of left and right fractional operators. The result of this computation is stated in the forthcoming Proposition \ref{Proposition:frac_comp}.

The main problem working in the fractional calculus framework is the domain of definition of these operators; Schwartz space is not invariant under fractional operators (cf. \cite{MR1347689}, section 8.2). Since we are seeking for the inverse of the infinitesimal generator, it is useful to have a space which remains invariant under the action of the Riemann-Liouville fractional operators. This kind of space has been thoroughly studied by Lizorkin \cite{MR0262814, LizorkinTranslation}, Samko, Kilbas and Marichev \cite{MR1347689} and Rubin \cite{MR1428214, MR3410931}.

\begin{definition}[Lizorkin space]\label{def:Lizorkin}
Consider the space of functions that vanish at zero together with all its derivatives:
\bline
\begin{equation*}
\Psi = \left\{ \psi \in \mathcal{S}(\RR)\left|  \psi^{ (j)}(0)=0, j \in \{0,1,2,\ldots \}  \right. \right\}.
\end{equation*}
\eline
Then, the space of functions whose Fourier transforms are in $\Psi$ is called the Lizorkin space and is defined by
\bline
\begin{equation*}
\Phi = \left\{ \phi \in \mathcal{S}(\RR) \left| \FF[\phi] \in \Psi  \right. \right\}.
\end{equation*}
\eline
\end{definition}

In the Lizorkin space, compositions of fractional operators are well defined and therefore fractional integrals are the inverses of fractional derivatives. In general, to invert the generator, we need to see how crossed compositions are computed. The following result is stated, without proof, for fractional integrals in the article of Feller \cite{MR0052018}. 
\begin{proposition}\label{Proposition:frac_comp}
Let $\lambda, \mu \in \RR$ with $(\lambda + \mu) \notin \ZZ$ and $\phi\in \Phi$. 
Then,  the crossed composition of Riemann-Liouville operators satisfy:
\bline
\begin{equation}
W_+^{\lambda} W_-^{\mu} \phi \left(x\right) = \frac{\sin\left(\mu \pi\right)}{\sin\left( \left(\lambda +\mu\right) \pi \right)} W_-^{\lambda + \mu} \phi \left(x\right) +  \frac{\sin\left(\lambda \pi \right)}{\sin\left( \left(\lambda +\mu\right) \pi \right)} W_+^{\lambda + \mu} \phi \left(x\right). \label{eq:cross_comp}
\end{equation}
\eline
\end{proposition}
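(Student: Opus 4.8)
The plan is to work on the Fourier side, where both fractional operators and crossed compositions become multiplication operators, and where the Lizorkin space is precisely designed to avoid singularities at the origin. First I would record the Fourier symbols of the Riemann–Liouville operators (this is exactly the content invoked in the Remark as Proposition~\ref{prop:FTfracop}): for $\phi\in\Phi$ one has, with a suitable convention for the Fourier transform,
\bline
\begin{equation*}
\FF\!\left[W_-^{\lambda}\phi\right](\xi) = (-i\xi)^{-\lambda}\,\FF[\phi](\xi)
\qquad\text{and}\qquad
\FF\!\left[W_+^{\lambda}\phi\right](\xi) = (i\xi)^{-\lambda}\,\FF[\phi](\xi),
\end{equation*}
\eline
where $(\mp i\xi)^{-\lambda} = |\xi|^{-\lambda} e^{\mp i\,\mathrm{sgn}(\xi)\lambda\pi/2}$ is the branch continuous off the negative real axis. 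Since $\FF[\phi]\in\Psi$ vanishes to infinite order at $\xi=0$, these products are again Schwartz functions vanishing to infinite order at $0$, so $W_\pm^\lambda\phi\in\Phi$ and all compositions below are legitimate; in particular the group property in the Remark follows by multiplying symbols.

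Next I would simply compose symbols. Applying $W_-^{\mu}$ then $W_+^{\lambda}$ multiplies $\FF[\phi](\xi)$ by $(i\xi)^{-\lambda}(-i\xi)^{-\mu}$. The crux is therefore the pointwise identity, for $\xi\neq 0$ and $(\lambda+\mu)\notin\ZZ$,
\bline
\begin{equation*}
(i\xi)^{-\lambda}(-i\xi)^{-\mu}
= \frac{\sin(\mu\pi)}{\sin((\lambda+\mu)\pi)}\,(-i\xi)^{-(\lambda+\mu)}
+ \frac{\sin(\lambda\pi)}{\sin((\lambda+\mu)\pi)}\,(i\xi)^{-(\lambda+\mu)}.
\end{equation*}
\eline
To verify this, write $\epsilon=\mathrm{sgn}(\xi)$, so that the left side equals $|\xi|^{-(\lambda+\mu)} e^{-i\epsilon(\lambda-\mu)\pi/2}$, while the two terms on the right are $|\xi|^{-(\lambda+\mu)} e^{+i\epsilon(\lambda+\mu)\pi/2}$ and $|\xi|^{-(\lambda+\mu)} e^{-i\epsilon(\lambda+\mu)\pi/2}$ respectively. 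Cancelling $|\xi|^{-(\lambda+\mu)}$ and setting $s=(\lambda+\mu)\pi/2$, $d=(\lambda-\mu)\pi/2$, the claim reduces to the elementary trigonometric identity $\sin(2s)\,e^{-i\epsilon d} = \sin(s+d)\,e^{i\epsilon s} + \sin(s-d)\,e^{-i\epsilon s}$, which one checks by expanding the right-hand side with the angle-addition formulas (the $\cos s$ terms combine to $2\sin s\cos s\,e^{-i\epsilon d}$ after using $\cos(\pm\epsilon s)=\cos s$, $\sin(\pm\epsilon s)=\pm\epsilon\sin s$, and the imaginary parts cancel). The hypothesis $(\lambda+\mu)\notin\ZZ$ is exactly what makes $\sin((\lambda+\mu)\pi)\neq 0$, so the coefficients are well defined.

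Finally I would multiply this symbol identity through by $\FF[\phi](\xi)$ — legitimate since all three symbols, after multiplication by $\FF[\phi]\in\Psi$, are genuine Schwartz functions — recognize the three resulting products as $\FF[W_-^{\lambda+\mu}\phi]$, $\FF[W_+^{\lambda+\mu}\phi]$ on the right and $\FF[W_+^{\lambda}W_-^{\mu}\phi]$ on the left, and apply the inverse Fourier transform to obtain \eqref{eq:cross_comp}. The main obstacle is not conceptual but bookkeeping: one must fix a single consistent branch convention for $(\pm i\xi)^{-\lambda}$ across $\xi>0$ and $\xi<0$ and check that the Fourier symbols of $W_\pm^\lambda$ are the ones stated with that convention (including the negative-order/derivative case, where $\frac{d}{dx}$ contributes a factor $i\xi$ that must be reconciled with $(\mp i\xi)^{+1}$), since an inconsistent choice of branch silently breaks the trigonometric identity. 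Once the conventions are pinned down, the verification is the short computation sketched above.
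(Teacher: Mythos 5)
Your proposal is correct and takes essentially the same route as the paper: both pass to the Fourier side via Proposition \ref{prop:FTfracop}, reduce \eqref{eq:cross_comp} to a pointwise identity among the symbols $(\pm i\xi)^{-\lambda}$, and verify that identity with angle-addition formulas (the paper separates real and imaginary parts and invokes Lemma \ref{lemma:trig_id1}). One typographical caveat: the branch formula $(\mp i\xi)^{-\lambda}=|\xi|^{-\lambda}e^{\mp i\sgn(\xi)\lambda\pi/2}$ as displayed is sign-inconsistent with the (correct) values $(i\xi)^{-\lambda}=|\xi|^{-\lambda}e^{-i\sgn(\xi)\lambda\pi/2}$ and $(-i\xi)^{-\mu}=|\xi|^{-\mu}e^{+i\sgn(\xi)\mu\pi/2}$ that you actually use two lines later, which is precisely the bookkeeping hazard you flag yourself and does not affect the argument.
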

Working in the Lizorkin space and using the last result we can compute the inverse of the infinitesimal generator of a stable process: 
\begin{theorem}[Inverse of the Infinitesimal Generator]\label{theorem: Inverse IG}
Let $\alpha \in (0,2)\setminus \left\{1\right\}$, $c_-, c_+ \geq 0$, not both zero. 
Consider $X\sim \stabc$ with infinitesimal generator $\LL$. 
Then, $\LL$ is invertible in $\Phi$ and for every $\phi\in \Phi$
\bline
\begin{equation}
\displaystyle \mathcal{L}^{-1}\phi\left(x\right) = K_{-} I_-^{\alpha}\phi\left(x\right) + K_{+} I_+^{\alpha}\phi\left(x\right), \label{eq: Inverse IG}
\end{equation}
\eline
where
\bline
\begin{equation*}
K_\pm = \frac{M_\pm}{M_-^2 + M_+^2 + 2M_-M_+\cos(\pi \alpha)}, \quad i=1,2,
\end{equation*}
\eline
and the constants $M_i$ as defined in Proposition \ref{prop: InfGen_FC}.
\end{theorem}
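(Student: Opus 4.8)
The plan is to pass to Fourier transforms, where $\LL$ becomes multiplication by its symbol, and to read off $\LL^{-1}$ from the reciprocal of that symbol; the point of working on $\Phi$ rather than $\SS(\RR)$ is precisely to make the division by the symbol (which vanishes at the origin) legitimate. First I would record the symbols of the fractional operators on $\Phi$ from Proposition~\ref{prop:FTfracop}: for $\phi\in\Phi$, $\FF[D_\pm^\alpha\phi](\xi)=(\pm i\xi)^{\alpha}\FF[\phi](\xi)$ and $\FF[I_\pm^\alpha\phi](\xi)=(\pm i\xi)^{-\alpha}\FF[\phi](\xi)$ with the principal branch of the power, so that by Proposition~\ref{prop: InfGen_FC}
\[
\FF[\LL\phi](\xi)=\Bigl(M_-(-i\xi)^{\alpha}+M_+(i\xi)^{\alpha}\Bigr)\FF[\phi](\xi)=:\Psi_\LL(\xi)\,\FF[\phi](\xi),\qquad \Psi_\LL(\xi)=|\xi|^{\alpha}\bigl(M_-e^{-i\pi\alpha\,\sgn(\xi)/2}+M_+e^{i\pi\alpha\,\sgn(\xi)/2}\bigr).
\]
A short computation gives $|\Psi_\LL(\xi)|^{2}=|\xi|^{2\alpha}D$ with $D:=M_-^{2}+M_+^{2}+2M_-M_+\cos(\pi\alpha)$.

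The second step is to check $D>0$. Since $M_\pm=c_\pm\Gamma(-\alpha)$ with $c_\pm\ge 0$ and $\Gamma(-\alpha)\neq 0$, the numbers $M_-,M_+$ have the same sign, so $M_-M_+\ge 0$, and one has the identity $D=(M_--M_+)^{2}+2M_-M_+\bigl(1+\cos(\pi\alpha)\bigr)$. Both summands are nonnegative (the second because $\alpha\in(0,2)\setminus\{1\}$ forces $\cos\pi\alpha\in(-1,1)$), and they cannot both vanish: that would require $M_-=M_+$ and $M_-M_+=0$, i.e.\ $c_-=c_+=0$, which is excluded. Hence $D>0$ and, in particular, $\Psi_\LL(\xi)\neq 0$ for every $\xi\neq 0$, while $\Psi_\LL$ is $C^{\infty}$ on $\RR\setminus\{0\}$ with $|\partial^{k}(1/\Psi_\LL)(\xi)|\lesssim|\xi|^{-\alpha-k}$ near the origin.

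Next I would establish that $\LL$ is a bijection of $\Phi$. Injectivity is immediate: $\LL\phi=0$ forces $\FF[\phi]=0$ off the origin, hence everywhere by continuity, hence $\phi=0$. For surjectivity, given $\phi\in\Phi$ put $g:=\FF^{-1}\bigl[\FF[\phi]/\Psi_\LL\bigr]$. Here is the one genuinely technical point: although $1/\Psi_\LL$ has a $|\xi|^{-\alpha}$ singularity at $0$, the quotient $\FF[\phi]/\Psi_\LL$ is again Schwartz and vanishes to infinite order at $0$, because $\FF[\phi]\in\Psi$ (so all its derivatives are $O(|\xi|^{N})$ near $0$ for every $N$), and therefore by the Leibniz rule every derivative of $\FF[\phi]/\Psi_\LL$ is $O(|\xi|^{N-\alpha-k})$ near $0$ and extends continuously by $0$ there; rapid decay at infinity is clear since $1/\Psi_\LL$ grows polynomially. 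Thus $\FF[\phi]/\Psi_\LL\in\Psi$, so $g\in\Phi$, and $\FF[\LL g]=\Psi_\LL\FF[g]=\FF[\phi]$ gives $\LL g=\phi$. Consequently $\LL$ is invertible on $\Phi$ and $\FF[\LL^{-1}\phi]=\FF[\phi]/\Psi_\LL$.

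It remains to identify $\LL^{-1}$ with the claimed operator, i.e.\ to verify the scalar identity $1/\Psi_\LL(\xi)=K_-(-i\xi)^{-\alpha}+K_+(i\xi)^{-\alpha}$ for $\xi\neq 0$. Multiplying the right-hand side by $\Psi_\LL(\xi)$ and using $(-i\xi)^{-\alpha}(i\xi)^{\alpha}=e^{i\pi\alpha\,\sgn(\xi)}$ and $(i\xi)^{-\alpha}(-i\xi)^{\alpha}=e^{-i\pi\alpha\,\sgn(\xi)}$, the product equals
\[
K_-M_-+K_+M_+ + (K_-M_++K_+M_-)\cos(\pi\alpha) + i\,(K_-M_+-K_+M_-)\sin(\pi\alpha)\,\sgn(\xi);
\]
with $K_\pm=M_\pm/D$ the imaginary part vanishes (since $K_-M_+=M_-M_+/D=K_+M_-$) and the real part is $D/D=1$. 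Hence $\FF[\LL^{-1}\phi]=\bigl(K_-(-i\xi)^{-\alpha}+K_+(i\xi)^{-\alpha}\bigr)\FF[\phi]=\FF\bigl[K_-I_-^{\alpha}\phi+K_+I_+^{\alpha}\phi\bigr]$, and inverting the Fourier transform (both sides lie in $\Phi$ by invariance of the Lizorkin space under $I_\pm^{\alpha}$) yields \eqref{eq: Inverse IG}. The main obstacle is the argument that division by $\Psi_\LL$ maps $\Psi$ into itself — this is exactly why one cannot stay in $\SS(\RR)$; alternatively one could avoid Fourier transforms and extract the two crossed compositions occurring in $\LL\LL^{-1}$ from Proposition~\ref{Proposition:frac_comp} via a limiting argument as $\lambda+\mu\to 0$, but the symbol computation above is cleaner.
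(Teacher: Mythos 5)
Your proof is correct, and it takes a genuinely different (though cognate) route from the paper's. The paper defines the candidate inverse $\GG\phi=K_-I_-^\alpha\phi+K_+I_+^\alpha\phi$ and verifies $\GG\LL=\LL\GG=\mathrm{id}$ by pure operator algebra, expanding the composition into four terms and invoking a separate lemma giving $D_\pm^\alpha I_\pm^\alpha=\mathrm{id}$ and $D_-^\alpha I_+^\alpha+D_+^\alpha I_-^\alpha=2\cos(\alpha\pi)\,\mathrm{id}$; that lemma is in turn derived from the crossed-composition formula of Proposition~\ref{Proposition:frac_comp} by the limit $\mu\to-\alpha$ (so $\lambda+\mu\to 0$), exactly the alternative you mention at the end. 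You instead stay entirely on the Fourier side: you compute the symbol $\Psi_\LL$, check the scalar identity $1/\Psi_\LL=K_-(-i\xi)^{-\alpha}+K_+(i\xi)^{-\alpha}$ directly, and then transfer back. What your approach buys is twofold. First, you make explicit that the denominator $D=M_-^2+M_+^2+2M_-M_+\cos(\pi\alpha)$ is strictly positive (so $K_\pm$ are well defined and $\Psi_\LL$ is nonvanishing off the origin) — the paper never checks this, though it is needed for the statement to make sense. Second, you address surjectivity honestly: your argument that division by $\Psi_\LL$ maps the space $\Psi$ of Fourier transforms into itself (infinite-order vanishing at $0$ absorbing the $|\xi|^{-\alpha-k}$ singularities of the derivatives of $1/\Psi_\LL$) is precisely the invariance of Lizorkin space under $I_\pm^\alpha$, which the paper uses implicitly when asserting $\GG\phi\in\Phi$. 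The one point worth tightening is the passage from ``every derivative of $\FF[\phi]/\Psi_\LL$ extends continuously by $0$ at the origin'' to smoothness at the origin; this is a standard mean-value-theorem argument, but it should be at least cited. The paper's route, by contrast, isolates the crossed-composition identity \eqref{eq:cross_comp} as a statement of independent interest (it is reused in the proof of Theorem~\ref{EngKur_generalization}), which is its main advantage over the more self-contained symbol computation.
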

Note that Lizorkin space is known to be dense in the space of continuous functions vanishing at infinity and in $L_p$  (cf. \cite{LizorkinTranslation} and \cite{MR1330207}), so that the above inversion formula is quite general. 
As an application of equation \eqref{eq: Inverse IG}, the following known results can be recovered:

\begin{enumerate}
\item For the case $\alpha \in (0,1)$, the L\'evy process $X$ is transient. 
Therefore, 
its potential corresponds to the inverse of the negative of the infinitesimal generator, $(-\LL)^{-1}$. 
The above theorem recovers the expression of Sato \cite[Example 5.4]{MR0408001}. 
\item For the case $\alpha \in (1,2)$, the L\'evy process $X$ is recurrent and its classical potential is infinite. Nevertheless, Port \cite{MR217877} defined the recurrent potential for stable processes (by an appropriate compensated kernel) and computed it explicitly. 
As Sato \cite{MR0408001} notes, for a wide class of L\'evy processes, the limit 
$\lim_{\lambda\to 0}(\lambda-\LL)^{-1}$ 
corresponds to a potential (classical or recurrent). 
On Lizorkin space, where we can explicitly compute an inversion thanks to the above theorem, 
Port's computation can be recovered. 
\item A heuristic explanation of the function involved in the Tanaka formula for strictly stable processes given by Tsukada in \cite{MR3964382} 
can be given as follows. Note that the It\^o formula for L\'evy processes (see Proposition \ref{prop:ItoFormula}) tells us that for any Schwartz function $f$, writing $g=\LL f$, we have
\[
	f(x+X_t)=f(x)+M^f_t+\int_0^t g(x+X_s)\, ds,
\]where $M^f$ is a martingale whose explicit expression is only needed later. Formally, if $g$ equals the Dirac $\delta$ distribution, the last summand equals the time that $X$ spends at $x$ on $[0,t]$, which is one guiding principle behind the construction of the local time of $X$ at $x$. Hence, if $\LL F=\delta$ (which will be given a sense in Section \ref{Prelim} and a proof in Lemma \ref{lemma: FracIntDelta}), then the local time should equal $F(x+X)-M^F$. Our formula for $\LL^{-1}$ allows us to guess a solution to $\LL F=\delta$ as a linear combination $\kappa_-(x^{-})^{\alpha-1}+\kappa_+(x^{+})^{\alpha-1}$, which is exactly the formula of Tsukada. 
That $\kappa_-\neq \kappa_+$ in general is a manifestation of the asymmetry in the jumps of $X$. 
\end{enumerate}

To state the Tanaka and Meyer-It\^o formulae for stable processes, we need more preliminaries concerning  definition of local time and an important class of admissible functions. 
We now consider $\alpha\in (1,2)$ for states to be recurrent and local time to be non trivial. 

\begin{definition}[Occupational local time]
Consider a family of random variables with two indices $\left\{ L_t^a(X): a\in \RR,t\geq 0 \right\}$. 
We will call it an occupational local time of a process $X$ if the occupation time formula is satisfied for any positive Borel measurable function $f:\RR \to [0,\infty)$:
\bline
\begin{equation*}
\int_0^t f\left(X_s\right) ds = \int_{-\infty}^{\infty} f\left(a\right) L_t^a(X) da \quad \text{a.s.}
\end{equation*}
\eline
\end{definition}

The fact that this local time exists for recurrent stable processes, as well as being jointly continuous in time and space, was established by Boylan \cite{MR158434} and Barlow \cite{MR958195}. See the textbook account in \cite[Ch. V]{MR1406564}.

The following definition corresponds to the function that appears in the Tanaka formula given by Tsukada in \cite{MR3964382}, but we will write it in our notation. 
\begin{definition}\label{definition:TanakaFunction}
For every fixed $\alpha \in (1,2)$, $c_-, c_+ \geq 0$, not both zero, we define the function $F=F^{\alpha,c_-,c_+}$ by:
\bline
\begin{equation}
F(x)=\kappa_+(x^-)^{\alpha-1}+\kappa_-(x^+)^{\alpha-1}
\label{TanakaFunction}
\end{equation}
\eline
where
\[	
	\kappa_\pm=\frac{c_\pm}{\Gamma(\alpha)\Gamma(-\alpha)[c_+^2+c_{-}^2+2c_+c_-\cos \alpha\pi]}
\]
\end{definition}
It is intentional that $\kappa_-$ accompanies $x^+$ because of Lemma \ref{lemma: FracIntDelta}. 
As we have remarked and will prove after Proposition \ref{lemma: FracIntDelta}, 
$F$ is a weak solution to the Poisson equation $\LL F= \delta$.  
If we consider adequate measures $\mu$ for which the convolution $F*\mu$ is well defined, we could regard $f(x)=(F*\mu)(x)$ as a solution to $\LL f = \mu$. 

\begin{definition}[The class $\mathcal{C}^{\alpha, c_-,c_+}$]\label{def: ClassC}
For every fixed $\alpha \in (1,2)$ and $c_-, c_+ \geq 0$, not both zero, 
the class $\mathcal{C}^{\alpha}$ is defined as 
\bline
\begin{align*}
 \displaystyle \left\{ 
f=F*\mu\left|  \mu\text{ is a 
signed measure such that } \int |x|^{\alpha-1}\, |\mu|(dx)<\infty
\right. 
\right\}. \label{TanakaClass}
\end{align*}
\eline
\end{definition}
The integrability condition on $\mu$ implies that the convolution is well defined and pointwise finite. 
The Meyer-It\^o formula will feature functions $f=F*\mu\in \mathcal{C}^{\alpha,c_-,c_+}$ where $\mu$ is finite and of compact support. 
Later, in Theorem \ref{EngKur_generalization}, 
we will consider convolutions where $\mu$ is a non compactly supported measure. 
This class of functions is quite large. Indeed, it contains the absolute value function and functions of the type $|x|^\gamma$ for $\gamma\in (\alpha-1,\alpha)$ (cf. Lemma \ref{lemma: FracIntDelta}). 
Therefore, differences of convex functions are contained in $\mathcal{C}^{\alpha, c_-,c_+}$. 
The case $\gamma=\alpha-1$ is special in that we can only prove its membership to $\mathcal{C}^{\alpha,c_-,c_+}$ in the symmetric case. 

Recall that the Meyer-It\^o theorem for semimartingales, for example from \cite{MR2020294}(Theorem 70), gives a semimartingale decomposition for $|X|$ which contains a semimartingale local time term. 
However, the latter is zero for a strictly stable process. 
For functions in the class $\mathcal{C}^{\alpha,c_-,c_+}$ we prove the following occupational Meyer-It\^o theorem, 
with a non-zero local time term. 

\begin{theorem}[Occupational Meyer-It\^o formula]\label{theorem: MeyerIto}
Let  $\alpha \in (1,2)$, $c_-, c_+ \geq 0$, not both zero, and consider a strictly stable process $X\sim \stabc$. 
Let $f=F*\mu\in \mathcal{C}^{\alpha,c_-,c_+}$ and furthermore assume that $\mu$ is finite and compactly supported. 
Then, 
\bline
\begin{equation}
    f\left(X_t\right) = f\left(X_0\right) + M_t + \int_{-\infty}^{\infty} L_t^a\left(X\right) \mu \left(da\right), \label{eq: MIformula}
\end{equation}
\eline
where
\bline
\begin{equation*}
    M_t = \int_{0}^{t}\int_{\mathbb{R}_{0}}\left[f\left(X_{s-}+h\right)-f\left(X_{s-}\right)\right]\tilde{N}\left(ds,dh\right), 
\end{equation*}
\eline
is a martingale and $L_t^a(X)$ is the occupational local time at $a$ up to time $t$ of $X$.
\end{theorem}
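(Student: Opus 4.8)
The plan is to prove \eqref{eq: MIformula} first for smooth mollifications of $f$, using the It\^o formula for L\'evy processes (Proposition \ref{prop:ItoFormula}) together with the occupation time formula, glued by the identity $\mathcal{L}F=\delta$ of Lemma \ref{lemma: FracIntDelta}, and then to let the mollification parameter go to zero. Fix $\rho_n\in C_c^\infty(\RR)$ with $\rho_n\ge 0$, $\int\rho_n=1$, $\mathrm{supp}\,\rho_n\subseteq[-\varepsilon_n,\varepsilon_n]$, $\varepsilon_n\downarrow 0$, and set $\mu_n:=\mu*\rho_n$ and $f_n:=f*\rho_n=F*\mu_n$. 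Then $\mu_n$ is a smooth, compactly supported density with $\|\mu_n\|\le\|\mu\|$, while $f_n\in C^\infty(\RR)$ has at most $|x|^{\alpha-1}$ growth and, for each fixed $n$, bounded first and second derivatives; hence $\mathcal{L}f_n$ is a well-defined continuous function, and since $\mathcal{L}$ is translation invariant, Lemma \ref{lemma: FracIntDelta} gives $\mathcal{L}f_n=(\mathcal{L}F)*\mu_n=\delta*\mu_n=\mu_n$.

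The crux is to show that
\[
M_t=\int_0^t\int_{\RR_0}\bigl[f(X_{s-}+h)-f(X_{s-})\bigr]\,\tilde{N}(ds,dh)
\]
is a well-defined square integrable martingale and that the analogous integrals $M^n$ built from $f_n$ converge to $M$ in $L^2$. I would split the jump integral at $|h|=1$. For $|h|>1$ one uses the global H\"older estimate $|f(x+h)-f(x)|\le C|h|^{\alpha-1}$ (from subadditivity of $t\mapsto t^{\alpha-1}$ on $[0,\infty)$, inherited by $f=F*\mu$), together with $|f(x)|\le C(1+|x|^{\alpha-1})$ and the moment bound $\EE|X_s|^{2(\alpha-1)}<\infty$, valid precisely because $2(\alpha-1)<\alpha$. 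The delicate part is $|h|\le1$: the naive H\"older bound is not $\nu$-square-integrable near the origin, so instead one notes that the function $\phi_F(b):=\int_{|h|\le1}|F(b+h)-F(b)|^2\,\nu(dh)$ satisfies $\phi_F(b)\asymp|b|^{\alpha-2}$ near the cusp $b=0$ — hence is locally integrable exactly because $\alpha>1$ — and decays like $|b|^{2(\alpha-2)}$ at infinity; by Cauchy--Schwarz, $\int_{|h|\le1}|f(b+h)-f(b)|^2\,\nu(dh)\le\|\mu\|\,(\phi_F*|\mu|)(b)$, which is locally integrable with controlled decay since $\mu$ is finite and compactly supported. Then the occupation time formula turns $\EE\int_0^t\int_{|h|\le1}|f(X_s+h)-f(X_s)|^2\,\nu(dh)\,ds$ into $\int_{\RR}\EE[L_t^b(X)]\,(\phi_F*|\mu|)(b)\,db$, which is finite using $\EE[L_t^b(X)]=\int_0^t p_s(b-X_0)\,ds\le C(t)$ uniformly in $b$ and $\EE[L_t^b(X)]\lesssim|b|^{-\alpha-1}$ at infinity (from $p_s(y)\le C\min(s^{-1/\alpha},s|y|^{-\alpha-1})$). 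The same argument, applied to $e_n:=f_n-f=F*(\mu_n-\mu)$ in place of $F$, gives a dominating function whose integral stays bounded, and since $e_n\to0$ with $\|e_n\|_\infty\le C\varepsilon_n^{\alpha-1}$ and $e_n\to 0$ locally in $C^1$ off the cusp set, dominated convergence yields $M_t^n\to M_t$ in $L^2$.

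With $M$ and the convergence $M^n\to M$ in hand, the rest is routine. Proposition \ref{prop:ItoFormula}, although stated for Schwartz functions, extends to the $C^\infty$, polynomially growing $f_n$ by a standard approximation (all relevant integrands being under control, as $\mathcal{L}f_n=\mu_n$ is bounded); since $X$ is a martingale in the recurrent regime this gives $f_n(X_t)=f_n(X_0)+M^n_t+\int_0^t\mathcal{L}f_n(X_s)\,ds=f_n(X_0)+M^n_t+\int_0^t\mu_n(X_s)\,ds$, and the occupation time formula (applied to $\mu_n^+$ and $\mu_n^-$ separately) rewrites the last term as $\int_{-\infty}^\infty L_t^a(X)\,\mu_n(a)\,da$. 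Letting $n\to\infty$: $f_n\to f$ locally uniformly gives $f_n(X_t)\to f(X_t)$ and $f_n(X_0)\to f(X_0)$; writing $\int L_t^a(X)\mu_n(a)\,da=\int (g*\widetilde\rho_n)(y)\,\mu(dy)$ with $g(a)=L_t^a(X)$ and $\widetilde\rho_n(x)=\rho_n(-x)$, the joint continuity of the local time (so $g$ is continuous, bounded and compactly supported for fixed $\omega$) and dominated convergence against the finite measure $|\mu|$ give $\int L_t^a(X)\mu_n(a)\,da\to\int_{-\infty}^\infty L_t^a(X)\,\mu(da)$ a.s.; and passing to a subsequence along which $M_t^{n_k}\to M_t$ a.s. yields \eqref{eq: MIformula} for each fixed $t$, almost surely. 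Finally $t\mapsto f(X_t)$ and $M$ are c\`adl\`ag while $t\mapsto\int L_t^a(X)\,\mu(da)$ is continuous, so the identity holds simultaneously for all $t\ge0$ off a single null set; the martingale property of $M$ was established above.

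The main obstacle is exactly the well-definedness and $L^2$-continuity of the jump martingale $M$: the global H\"older exponent $\alpha-1$ of $f$ is below $\alpha/2$, so the small-jump part is not controllable pointwise, and one must use the occupation time formula to average against the local time, invoking both the precise cusp behaviour of $F$ (giving $\phi_F\in L^1_{\mathrm{loc}}$ because $\alpha>1$) and heat-kernel bounds on $\EE[L_t^b(X)]$; the assumption $\alpha\in(1,2)$ enters here in an essential way (through $\alpha>1$ and $2(\alpha-1)<\alpha$), beyond merely guaranteeing recurrence and the existence of local time. A secondary, purely technical point is the extension of the L\'evy--It\^o formula from Schwartz functions to the mollifications $f_n$. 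One could alternatively obtain the general case by stochastic Fubini from the case $\mu=\delta_a$ (the Tanaka formula of \cite{MR3964382}), but this requires the same integrability estimate for $F$ and so does not bypass the main difficulty.
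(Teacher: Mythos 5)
Your proposal is correct in outline and in its key estimates, and it follows the same architecture as the paper's proof (mollify $f$ to $f_n=F*\rho_n*\mu$, apply the L\'evy--It\^o formula of Proposition \ref{prop:ItoFormula}, split the compensated jump integral at $|h|=1$, convert the drift via the occupation time formula, and pass to the limit). Where you genuinely diverge is in the central difficulty, the square-integrability of the small-jump part. The paper (Lemma \ref{lemma: Tsukada_results} and Corollary \ref{lemma: mgl_bounds}, imported from Tsukada) takes the expectation \emph{first}, using the uniform negative-moment bound $\EE[|X_s-a|^{-\gamma}]\le S(\alpha,\gamma)s^{-\gamma/\alpha}$ of Proposition \ref{prop: StableMoments} to get a deterministic bound $C\,s^{(\alpha-2-\epsilon_0)/\alpha}|h|^{\alpha+\epsilon_0}$ that is then integrated against $ds\,\nu(dh)$; you instead integrate over $h$ first, exploit the cusp behaviour $\phi_F(b)\asymp|b|^{\alpha-2}\in L^1_{\mathrm{loc}}$, and then average against $\EE[L_t^b]$ using heat-kernel bounds. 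The two are essentially Fubini rearrangements of the same computation, and both isolate correctly where $\alpha>1$ is used beyond recurrence; the paper's version has the advantage of producing an explicit dominating function in $L^1(ds\otimes\nu)$ that serves simultaneously for the martingale property, the It\^o isometry, and the $L^2$ convergence $M^{1,n}\to M^1$, which is exactly how Steps 2--3 of the paper are organized. One point where you are more cavalier than the paper: you assert that the pointwise generator $\LL f_n$ (defined by the integral \eqref{eq:Lextension}) equals $\mu_n=\mu*\rho_n$ because $\LL F=\delta$ and $\LL$ commutes with convolution. But $\LL F=\delta$ is only established distributionally on $\Phi'$ (Lemma \ref{lemma: FracIntDelta}), and identifying the $\Phi'$-identity with the pointwise one — equivalently, extending the functional $\phi\mapsto(\LL f_n,\phi)$ from the Lizorkin space to $C_c$ so it can be tested against $a\mapsto L^a_t$ — is precisely what the paper's Lemma \ref{lemma: fracderdist} (via Lizorkin's completely balanced averages and the density of $\Phi$ in $C_c$) is for; your argument needs that lemma, or an equivalent direct computation of \eqref{eq:Lextension} for $f_n$, to be complete. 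A final cosmetic slip: Proposition \ref{prop:ItoFormula} is stated for $C^2_{1+,b}$, not for Schwartz functions, so no further approximation is needed there once one checks $f_n\in C^\infty_{1+,b}$.
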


The novel part in this result is the representation of the semimartingale in terms of an occupational local time.
\begin{remark} 
\begin{itemize}
\item In the limiting case $\alpha=2$, we have $F_{\pm}(x)=x^{\pm}$ and the corresponding class $\mathcal{C}$ can be identified with that of differences of convex functions (cf. \cite[Thm. 6.22]{MR1121940}). 
\item For recurrent symmetric stable process, that is $\alpha \in (1,2)$ and $c_-=c_+=c >0$, we have $F(x)=\kappa_{\alpha,c}|x|^{\alpha - 1}$ for some constant $\kappa_{\alpha,c}$ (cf. \cite[Corollary 1]{MR2409011}).
\item The Tanaka formula of Tsukada \cite{MR3964382}, corresponds to the case where $f=F=F*\delta$. 
\item The compact support hypothesis of $\mu$ is sufficient to ensure the integrability of all the terms in \eqref{eq: MIformula}. Since strictly stable processes have finite $\kappa$-moments for $\kappa \in (-1,\alpha)$, for non compactly supported measures $\mu$, we would at least  need to verify (or assume) the integrability of $f(X_t)$ in $L^1(\PP)$. 
\end{itemize}
\end{remark}

In general, we cannot handle the case when $\mu$ is not compactly supported, due to the integrability restrictions of strictly stable processes. Nevertheless, in the following particular case, we obtain a 
generalization of the works of Salminen and Yor in \cite{MR2409011} and Engelbert and Kurenok \cite{MR3943123} from the symmetric to the general case. 
Formally, the result would follow from applying Theorem \ref{theorem: MeyerIto} to the infinite measure $\mu(dy)=|y|^{\gamma-\alpha}[k_-\ii_{y>0}+k_+\ii_{y<0}]\, dy$. 
Recall the definition of the constants $M_\pm$ in Proposition \ref{prop: InfGen_FC}. 
\begin{theorem}[Power decomposition]\label{EngKur_generalization}
Let $\alpha \in (1,2)$ and $c_-,c_+\geq 0$ not both zero, and consider a strictly stable process $X\sim \stabc$. Then for all $x\in \RR$ and $\gamma \in (\alpha-1,\alpha)$  we have the decomposition
\bline
\begin{eqnarray}
\left| X_t - x\right|^{\gamma} &=& \left| X_0 - x\right|^{\gamma} + \int_0^t \int_{\RR_0} \left[ \left| X_{s-} - x + h\right|^{\gamma} - \left| X_{s-} - x\right|^{\gamma}\right] \tilde{N}(ds,dh) \nonumber\\
&+&  \int_0^t \left| X_s - x\right|^{\gamma-\alpha} \left[ k_-\ii_{\{X_s>x\}}  + k_+\ii_{\{X_s<x\}} \right] ds, \label{eq:EKgen}
\end{eqnarray}
\eline
where $k_{\pm}:=k_{\pm}\left( \alpha, \gamma, c_-, c_+ \right)$ 
are given by
\bline
\begin{eqnarray*}
k_- &=& 
\frac{\Gamma(\gamma + 1)}{\Gamma(\gamma- \alpha+1)}\left[ M_+\frac{\sin\left(-\alpha \pi\right)}{\sin\left((\gamma-\alpha+1)\pi\right)} + M_- \frac{\sin\left((\gamma+1)\pi\right)}{\sin\left((\gamma-\alpha+1)\pi\right)} + M_+\right]\quad\text{and}\\
k_+ &=& \frac{\Gamma(\gamma + 1)}{\Gamma(\gamma- \alpha+1)}\left[ M_-\frac{\sin\left(-\alpha \pi\right)}{\sin\left((\gamma-\alpha+1)\pi\right)} + M_+ \frac{\sin\left((\gamma+1)\pi\right)}{\sin\left((\gamma-\alpha+1)\pi\right)} + M_-\right]. 
\end{eqnarray*}
\eline
\end{theorem}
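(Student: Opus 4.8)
The plan is to apply It\^o's formula for L\'evy processes (Proposition \ref{prop:ItoFormula}) to smooth approximations of $f=f_{x}:=|\,\cdot-x|^{\gamma}$ and pass to the limit, the key analytic input being the explicit identification of $\mathcal{L}f_{x}$ as the absolutely continuous measure with density $y\mapsto|y-x|^{\gamma-\alpha}[k_{-}\ii_{\{y>x\}}+k_{+}\ii_{\{y<x\}}]$. Equivalently one could phrase the approximation step as a truncated form of Theorem \ref{theorem: MeyerIto} --- this is the ``formal'' route alluded to before the statement, applied to truncations of the infinite measure $\mu$ --- but the It\^o route is cleaner because it avoids reconstructing $f_{x}$ from $F$ and the attendant kernel-of-$\mathcal{L}$ (affine) ambiguity. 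So the proof has two parts: (a) the fractional-calculus identity $\mathcal{L}f_{x}=\mu$; (b) the approximation/limit argument.

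For part (a), translating we may take $x=0$ and split $|t|^{\gamma}=(t^{+})^{\gamma}+(t^{-})^{\gamma}$, so that by Proposition \ref{prop: InfGen_FC} it suffices to compute $D_{\pm}^{\alpha}(t^{\pm})^{\gamma}$. The two ``aligned'' combinations are classical: writing $D_{-}^{\alpha}=\frac{d^{2}}{dt^{2}}W_{-}^{2-\alpha}$, one has $W_{-}^{2-\alpha}(t^{+})^{\gamma}=\frac{\Gamma(\gamma+1)}{\Gamma(\gamma+3-\alpha)}(t^{+})^{\gamma+2-\alpha}$, and since $\gamma>\alpha-1$ the exponent $\gamma+2-\alpha$ exceeds $1$, so this function is $C^{1}$, vanishes to first order at $0$, and its distributional second derivative carries no atomic term; hence $D_{-}^{\alpha}(t^{+})^{\gamma}=\frac{\Gamma(\gamma+1)}{\Gamma(\gamma-\alpha+1)}(t^{+})^{\gamma-\alpha}$, and symmetrically $D_{+}^{\alpha}(t^{-})^{\gamma}=\frac{\Gamma(\gamma+1)}{\Gamma(\gamma-\alpha+1)}(t^{-})^{\gamma-\alpha}$. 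For the two ``crossed'' combinations, which are not given by convergent integrals, I would use $(t^{+})^{\gamma}=\Gamma(\gamma+1)\,W_{-}^{\gamma+1}\delta$, apply the crossed-composition identity \eqref{eq:cross_comp} (after its standard extension from $\Phi$ to homogeneous distributions; note $\gamma+1-\alpha\in(0,1)$ is not an integer) to write $W_{+}^{-\alpha}W_{-}^{\gamma+1}\delta$ as a combination of $W_{-}^{\gamma+1-\alpha}\delta$ and $W_{+}^{\gamma+1-\alpha}\delta$, and recall $W_{\mp}^{\gamma+1-\alpha}\delta=(t^{\pm})^{\gamma-\alpha}/\Gamma(\gamma-\alpha+1)$, with the analogous reflected computation for $D_{-}^{\alpha}(t^{-})^{\gamma}$. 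Adding the four pieces and regrouping, the coefficients of $(t^{+})^{\gamma-\alpha}$ and $(t^{-})^{\gamma-\alpha}$ are exactly the constants $k_{\pm}$ of the statement, the aligned pieces contributing the bare $M_{\pm}$ summands and the crossed pieces the $\sin$-quotients; this is essentially the computation behind Lemma \ref{lemma: FracIntDelta}. The condition $\gamma<\alpha$ ensures $|y-x|^{\gamma-\alpha}$ is locally integrable, so $\mathcal{L}f_{x}$ is a genuine ($\sigma$-finite, signed) measure with no singular part at $x$.

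For part (b), take a $C^{\infty}$ family $f_{\varepsilon}(y)=(|y-x|^{2}+\varepsilon^{2})^{\gamma/2}$, so that $f_{\varepsilon}\downarrow f_{x}$, $f_{\varepsilon}'(x)=0$, and $f_{\varepsilon},f_{\varepsilon}'$ are bounded uniformly in $\varepsilon$ on compacts. Proposition \ref{prop:ItoFormula} gives
\bline
\begin{equation*}
f_{\varepsilon}(X_{t})=f_{\varepsilon}(X_{0})+\int_{0}^{t}\!\!\int_{\RR_{0}}[f_{\varepsilon}(X_{s-}+h)-f_{\varepsilon}(X_{s-})]\,\tilde{N}(ds,dh)+\int_{0}^{t}\mathcal{L}f_{\varepsilon}(X_{s})\,ds,
\end{equation*}
\eline
and I would let $\varepsilon\downarrow0$. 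The endpoints converge pointwise. For the drift, using the integral form of $\mathcal{L}$ and dominated convergence in $h$ one gets $\mathcal{L}f_{\varepsilon}(y)\to|y-x|^{\gamma-\alpha}[k_{-}\ii_{\{y>x\}}+k_{+}\ii_{\{y<x\}}]$ for every $y\ne x$; together with the uniform bound $|\mathcal{L}f_{\varepsilon}(y)|\le C_{K}\,|y-x|^{\gamma-\alpha}$ on compacts $K$ (for $|y-x|\ge\varepsilon$ the generator is comparable to $|y-x|^{\gamma-\alpha}$, and for $|y-x|<\varepsilon$ it is $O(\varepsilon^{\gamma-\alpha})\le|y-x|^{\gamma-\alpha}$ since $\gamma-\alpha<0$) and the occupation-time formula, which gives $\int_{0}^{t}|X_{s}-x|^{\gamma-\alpha}\,ds=\int_{\RR}|a-x|^{\gamma-\alpha}L_{t}^{a}(X)\,da<\infty$ a.s.\ (joint continuity and compact support of $a\mapsto L^{a}_{t}(X)$, plus $\gamma-\alpha\in(-1,0)$), dominated convergence yields $\int_{0}^{t}\mathcal{L}f_{\varepsilon}(X_{s})\,ds\to\int_{0}^{t}|X_{s}-x|^{\gamma-\alpha}[k_{-}\ii_{\{X_{s}>x\}}+k_{+}\ii_{\{X_{s}<x\}}]\,ds$. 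Consequently the compensated integrals converge as well; to identify the limit with $M_{t}=\int_{0}^{t}\!\int_{\RR_{0}}[|X_{s-}-x+h|^{\gamma}-|X_{s-}-x|^{\gamma}]\tilde{N}(ds,dh)$ and to know $M$ is a true martingale, I would check that $h\mapsto|y+h|^{\gamma}-|y|^{\gamma}$ (and its $f_{\varepsilon}$-analogues) lies in $\{g:\int_{\RR_{0}}(|g|^{2}\wedge|g|)\,\nu(dh)<\infty\}$ with a bound integrable in $(s,\omega)$: near $h=0$ the increment is $O(|h|)$ and $\int_{|h|\le1}|h|^{2}\,\nu(dh)<\infty$ because $\alpha<2$, while for $|h|\ge1$ it is $O(|h|^{\gamma})$ and $\int_{|h|\ge1}|h|^{\gamma}\,\nu(dh)<\infty$ because $\gamma<\alpha$, the $(s,\omega)$-integrability following from $\EE|X_{s}-x|^{\kappa}<\infty$ for $\kappa\in(-1,\alpha)$.

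The main obstacle is the behaviour near the singularity $y=x$: establishing that $\mathcal{L}f_{\varepsilon}\to\mathcal{L}f_{x}$ off $x$ with precisely the density above, together with the uniform-in-$\varepsilon$ local bound $|\mathcal{L}f_{\varepsilon}(y)|\le C|y-x|^{\gamma-\alpha}$ --- in particular that no spurious mass concentrates at $x$ in the limit. A secondary point is that when $\gamma$ is close to $\alpha$ the large-jump increments $|X_{s-}-x+h|^{\gamma}$ are not square-integrable against $\nu$, so the convergence of the compensated Poisson integrals must be run in the $|g|^{2}\wedge|g|$ theory rather than through the It\^o isometry, with the moment bookkeeping $\kappa\in(-1,\alpha)$. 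The exclusion of $\gamma=\alpha-1$ is essential and not cosmetic: there $|y-x|^{\gamma-\alpha}=|y-x|^{-1}$ fails to be locally integrable and $\mathcal{L}f_{x}$ acquires a $\delta_{x}$ term, which is exactly the Tanaka regime captured by Definition \ref{definition:TanakaFunction} and Theorem \ref{theorem: MeyerIto}.
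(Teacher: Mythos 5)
Your route through part (b) is genuinely different from the paper's. The paper does not mollify the power function: it takes the Tanaka formula of Corollary \ref{corollary: TanakaFormula} at level $a$, integrates both sides against the translated measure $\mu^x(da)$, and interchanges the $a$-integral with the compensated Poisson integral via a stochastic Fubini theorem, after which the occupation formula turns $\int L^a_t\,\mu^x(da)$ into the time integral. Your mollification $f_\varepsilon=(|\cdot-x|^2+\varepsilon^2)^{\gamma/2}$ plus It\^o plus a limit in $\varepsilon$ is a legitimate alternative: it avoids representing $|\cdot-x|^\gamma$ as $F*\mu$ and hence the stochastic Fubini, at the price of controlling $\LL f_\varepsilon$ near the singularity --- which you rightly flag as the main obstacle, and which is settled cleanly by the scaling identity $\LL f_\varepsilon(y)=\varepsilon^{\gamma-\alpha}(\LL f_1)\bigl((y-x)/\varepsilon\bigr)$ with $f_1(z)=(z^2+1)^{\gamma/2}$, together with $|\LL f_1(z)|\le C(1+|z|)^{\gamma-\alpha}$. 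Part (a), the identification of the drift density via $f_\pm^{\gamma}=\Gamma(\gamma+1)I_\mp^{\gamma+1}\delta$ and the crossed-composition formula \eqref{eq:cross_comp}, is exactly the computation the paper carries out before Lemma \ref{lemma: const_Fournier}. (One point worth a sentence there: an identity in $\Phi'$ determines $\LL f_x$ only modulo polynomials, since $\Phi$ contains no nonzero compactly supported functions; homogeneity of degree $\gamma-\alpha\in(-1,0)$ removes the ambiguity, as you hint.)

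The one genuine gap is the square-integrability estimate for the small-jump part of the compensated integral. You bound the increment near $h=0$ by $O(|h|)$ and invoke $\int_{|h|\le 1}|h|^2\,\nu(dh)<\infty$. But the implicit constant in that $O(|h|)$ is of order $|X_{s-}-x|^{\gamma-1}$ (mean value theorem), so the resulting $(s,\omega)$-bound is $|h|^2|X_{s-}-x|^{2\gamma-2}$, whose expectation requires $2\gamma-2>-1$, i.e.\ $\gamma>1/2$; this fails for $\gamma$ close to $\alpha-1$ whenever $\alpha<3/2$. Since points are non-polar for $\alpha\in(1,2)$, $X_{s-}$ genuinely approaches $x$ and the blow-up cannot be waved away. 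The standard repair is what the paper does: split the jumps at $|h|=|X_{s-}-x|$ rather than at $|h|=1$ and substitute $h=(X_{s-}-x)u$, reducing matters to $\EE\int_0^t|X_{s-}-x|^{2\gamma-\alpha}\,ds<\infty$ (valid because $2\gamma-\alpha>\alpha-2>-1$); alternatively, use the interpolated bound $|h|^{\alpha+\epsilon_0}|X_{s-}-x|^{2\gamma-\alpha-\epsilon_0}$ in the spirit of Lemma \ref{lemma: Tsukada_results}. With that substitution (and a localization to cover the unbounded first derivative of $f_\varepsilon$ when $\gamma>1$, since Proposition \ref{prop:ItoFormula} is stated for $C^2_{1+,b}$), your argument goes through.
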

Note that the last integral in \eqref{eq:EKgen} could be written in terms of the local time as
\bline
\begin{equation*}
    \int_{-\infty}^{\infty} \left| a - x\right|^{\gamma-\alpha} \left[ k_-\ii_{\{a>x\}}  + k_+\ii_{\{a<x\}} \right] L_t^a da.
\end{equation*}
\eline
The main result of Engelbert and Kurenok \cite{MR3943123} is that this decomposition corresponds to a submartingale, thus providing the Doob-Meyer decomposition for $\left| X_t - x\right|^{\gamma}$, when $X$ is a symmetric stable process. However, if asymmetry in the jumps of the stable process is allowed, this decomposition will not be in general a submartingale. By direct inspection, the last term of the decomposition will correspond to an increasing process if and only if $k_{\pm} \geq 0$.

The constants $k_{\pm}$ have been found and used by Fournier \cite{MR3060151} by other means and in a different context. Fournier proved pathwise uniqueness for SDEs driven by an asymmetric strictly stable process and, in order to use the Gronwall inequality, he defined a constant $\beta(a,c) \in (\alpha-1,1)$, where $a=\cos(\pi \alpha)$ and $c=c_-/c_+$, assuming $0<c_- < c_+$. Then, he proved that $k_+ = 0$ for $\gamma=\beta(a,c)$.  
We will prove in Lemma \ref{lemma: const_Fournier} that, in fact, both $k_{\pm}$ are non negative for all $\gamma \geq \beta(a,c)$ and otherwise one of them is negative. 

\begin{corollary}\label{cor:Semi-DM}
Let $a=\cos(\pi \alpha)$ and $c=(c_-\wedge c_+)/(c_-\vee c_+)$. 
Then the power decomposition in Theorem \ref{EngKur_generalization} for the process $\left| X_t - x\right|^{\gamma}$ is a submartingale if $\gamma \in [\beta(a,c),\alpha)$; whereas, for $\gamma \in (\alpha-1,\beta(a,c))$ it is a semimartingale, whose finite variation part is not monotone.
\end{corollary}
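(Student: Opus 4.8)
The plan is to read the corollary off the decomposition \eqref{eq:EKgen} of Theorem~\ref{EngKur_generalization} together with the sign analysis of the constants $k_\pm$ supplied by Lemma~\ref{lemma: const_Fournier}. Write $Y_t=|X_t-x|^\gamma$, so that \eqref{eq:EKgen} reads $Y_t=Y_0+M_t+A_t$ with $M_t$ the stochastic integral against $\tilde N$ and
\[
A_t=\int_0^t |X_s-x|^{\gamma-\alpha}\bigl(k_-\ii_{\{X_s>x\}}+k_+\ii_{\{X_s<x\}}\bigr)\,ds .
\]
Since $\gamma\in(\alpha-1,\alpha)\subset(-1,\alpha)$, the $Y_t$ are integrable, and by the scaling property of $X$ so are the $A_t$ (the integrand has a finite moment of order $\gamma-\alpha\in(-1,0)$, and the resulting power of $s$ is integrable near $0$ because $\gamma>0$); hence $M$ is a true martingale and $A$, which is continuous and adapted, is the predictable finite-variation part of a bona fide semimartingale decomposition of $Y$. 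Furthermore the occupation formula with $f=\ii_{\{x\}}$ gives $\leb\{s\le t:X_s=x\}=0$ a.s., so $|X_s-x|^{\gamma-\alpha}>0$ for $ds$-a.e.\ $s$; therefore the monotonicity of $A$ is governed entirely by the signs of $k_-$ and $k_+$, and in particular $A$ is a.s.\ nondecreasing if and only if $k_-\ge 0$ and $k_+\ge 0$.

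I would then split on $\gamma$. For $\gamma\in[\beta(a,c),\alpha)$, Lemma~\ref{lemma: const_Fournier} gives $k_-\ge 0$ and $k_+\ge 0$, so $A$ is nondecreasing; consequently $Y$ is a submartingale and \eqref{eq:EKgen} is its Doob--Meyer decomposition. For $\gamma\in(\alpha-1,\beta(a,c))$, Lemma~\ref{lemma: const_Fournier} gives that one of $k_\pm$ is strictly negative, and since a short computation also yields $k_-+k_+>0$ throughout $(\alpha-1,\alpha)$, the other constant is strictly positive; say $k_+<0<k_-$ (the reverse ordering is symmetric under exchanging $(c_-,c_+)$). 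On any time interval on which $X$ stays strictly above $x$ we have $dA_t=|X_t-x|^{\gamma-\alpha}k_-\,dt>0$, while on any interval on which $X$ stays strictly below $x$ we have $dA_t=|X_t-x|^{\gamma-\alpha}k_+\,dt<0$. As $X$ is a recurrent strictly $\alpha$-stable process with $\alpha\in(1,2)$ it oscillates — $\limsup_t X_t=+\infty$ and $\liminf_t X_t=-\infty$ a.s.\ — and by right-continuity of its paths it occupies each of $(x,\infty)$ and $(-\infty,x)$ over time intervals of positive length; so $A$ strictly increases on some interval and strictly decreases on another. Hence the finite-variation part of \eqref{eq:EKgen} is a.s.\ not monotone, and $Y$ is a semimartingale that is neither a sub- nor a supermartingale.

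Granting Lemma~\ref{lemma: const_Fournier} the corollary is then immediate; the substantive work lies in that lemma, namely rewriting the trigonometric expressions for $k_\pm$ via product-to-sum identities so that $k_\pm$ becomes $c_\pm$ times a single trigonometric factor, and matching the resulting sign condition with Fournier's defining relation for $\beta(a,c)$ (the inequality $k_-+k_+>0$ falls out of the same computation). Within the proof of the corollary itself the only delicate point is the non-monotonicity step, where one uses both the strict positivity of the ``other'' constant and the oscillation of a recurrent $\alpha$-stable process, so as to guarantee that $X$ genuinely spends positive time on either side of $x$ and $A$ therefore moves in both directions rather than being constant or monotone.
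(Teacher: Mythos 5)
Your proposal is correct and takes essentially the same route as the paper: the corollary is read directly off the decomposition \eqref{eq:EKgen} of Theorem \ref{EngKur_generalization} together with the sign analysis of $k_\pm$ in Lemma \ref{lemma: const_Fournier}. You actually supply more detail than the paper's one-line proof — in particular the justification that the finite-variation part is genuinely non-monotone for $\gamma\in(\alpha-1,\beta(a,c))$, via the oscillation of the recurrent process and the a.e.\ positivity of $|X_s-x|^{\gamma-\alpha}$ — and your detour through the claim $k_-+k_+>0$ is unnecessary, since the lemma already asserts that one of $k_\pm$ is positive on all of $(\alpha-1,\alpha)$.
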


The organization of the paper is as follows. In section \ref{Prelim} we state known and preliminary results regarding strictly stable processes and fractional calculus that we need for the main results. Section \ref{MainRes} contains proofs of the main results, examining the crossed composition Proposition \ref{Proposition:frac_comp}, the Inversion Theorem \ref{theorem: Inverse IG}, the Meyer-It\^o Theorem \ref{theorem: MeyerIto} and finally the Power Decomposition Theorem \ref{EngKur_generalization}.

\section{Preliminaries: stable processes and fractional operators} \label{Prelim}
Fractional calculus has been studied almost since the invention of calculus. 
One of the most famous applications is the solution to the tautochrone problem by Abel (cf. \cite{MR3721889}). 
Even though many mathematicians have contributed to the formalization of the field; 
it was Marcel Riesz who systematized several results in terms of non-local operator theory (cf. \cite{MR33}). 
The book of Samko, Kilbas and Marichev \cite{MR1347689} will be our main reference for the theory of fractional calculus in what follows. As been pointed out in the introduction, the connection between fractional calculus and stable processes will appear naturally by means of their infinitesimal generator.

In this section we state the preliminaries, regarding stable processes and fractional operators, we will need in order to prove the results outlined in the previous section.

Following Applebaum \cite{MR2512800} (Theorem 1.2.14 and 2.4.16), we state the L\'evy-Khintchine formula and the L\'evy-It\^o decomposition for the special case of strictly stable processes.

\begin{corollary}[L\'evy-Khintchine formula]
Let $X \sim \stabc$ with $c_-, c_+ \geq 0$, not both zero. Then its characteristic exponent, $\phi(u):= \frac{1}{t}\log \EE\left( e^{iuX_t }\right)$ with $u\in \RR$, can be written as
\bline
\begin{equation*}
\phi(u) = \begin{cases}\displaystyle \int_{\RR_0} \left(e^{iuh} -1\right) \nu(dh) & \text{if $\alpha\in(0,1)$},\\
\displaystyle \int_{\RR_0} \left(e^{iuh} -1- iuh \right) \nu(dh) & \text{if $\alpha\in(1,2)$}.
\end{cases}
\end{equation*}
\eline
\end{corollary}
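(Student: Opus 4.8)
The plan is to read the characteristic exponent straight off the L\'evy--It\^o decomposition recorded above, using the exponential formula for integrals against a Poisson random measure (the ``master formula''; see Applebaum \cite{MR2512800}, or \cite{MR1406564}). The point is that for a strictly $\alpha$-stable process there is no Gaussian part, so $\phi$ is entirely governed by $\nu$, and the only thing to check is that the relevant integrals converge in each of the two regimes $\alpha\in(0,1)$ and $\alpha\in(1,2)$.

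First I would treat $\alpha\in(0,1)$. Here the decomposition is $X_t-X_0=\int_0^t\int_{\RR_0}h\,N(ds,dh)$, and this is a genuine absolutely convergent sum of jumps because $\int_{\RR_0}(|h|\wedge 1)\,\nu(dh)<\infty$ --- the only delicate point being the behaviour at the origin, where $\int_{|h|\le 1}|h|\,\nu(dh)=(c_-+c_+)\int_0^1 h^{-\alpha}\,dh<\infty$ precisely because $\alpha<1$. Since moreover $\int_{\RR_0}|e^{iuh}-1|\,\nu(dh)<\infty$ (the integrand is $O(|uh|)$ near $0$ and bounded at infinity, and $\int_{|h|>1}\nu(dh)<\infty$), the exponential formula applied to the Poisson random measure $N$ with intensity $ds\,\nu(dh)$ gives
\[
\EE\!\left(e^{iu(X_t-X_0)}\right)=\exp\!\left(t\int_{\RR_0}\bigl(e^{iuh}-1\bigr)\nu(dh)\right),
\]
so $\phi(u)=\int_{\RR_0}(e^{iuh}-1)\,\nu(dh)$. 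For $\alpha\in(1,2)$ one argues identically with the compensated measure: now $\int_{\RR_0}(|uh|^2\wedge|uh|)\,\nu(dh)<\infty$, since near $0$ the integrand $|e^{iuh}-1-iuh|\le \tfrac12 u^2h^2$ is integrated against $|h|^{-\alpha-1}dh$ (finite because $\alpha<2$) and near infinity it is $O(|uh|)$ against $|h|^{-\alpha-1}dh$ (finite because $\alpha>1$). The exponential formula for compensated Poisson integrals (Applebaum \cite{MR2512800}, Thm.~2.4.16) then yields $\EE(e^{iu(X_t-X_0)})=\exp\bigl(t\int_{\RR_0}(e^{iuh}-1-iuh)\nu(dh)\bigr)$, i.e. $\phi(u)=\int_{\RR_0}(e^{iuh}-1-iuh)\,\nu(dh)$. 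In both cases $X_t-X_0$ has the same law regardless of the (deterministic) starting point, so this is exactly $\tfrac1t\log\EE(e^{iuX_t})$.

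I do not expect a substantive obstacle here. An alternative, equally short route is to quote the general L\'evy--Khintchine formula (Applebaum \cite{MR2512800}, Thm.~1.2.14): the Gaussian coefficient vanishes for any $\alpha<2$, and the drift appearing in the truncation-free form (when $\alpha\in(0,1)$) or in the canonically centred form (when $\alpha\in(1,2)$) is forced to be $0$ --- for instance because strict stability imposes $c\,\phi(u)=\phi(c^{1/\alpha}u)$ for all $c>0$, while the integral term already obeys this scaling (substitute $h\mapsto c^{-1/\alpha}h$ and use $\nu(c^{-1/\alpha}dh)=c\,\nu(dh)$), so any residual linear term $ib'u$ must satisfy $b'(c-c^{1/\alpha})=0$ for all $c$, hence $b'=0$ as $\alpha\ne 1$; alternatively, in the case $\alpha\in(1,2)$ one simply invokes that $X$ is a martingale when $X_0$ is deterministic. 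The only place requiring care is the borderline integrability of $\nu$ at $0$ and at $\infty$, which is exactly what makes the uncompensated form appropriate for $\alpha\in(0,1)$ and the compensated form appropriate for $\alpha\in(1,2)$.
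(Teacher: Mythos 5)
Your argument is correct and matches the paper's intent: the paper gives no proof, simply citing Applebaum's Theorems 1.2.14 and 2.4.16 (the general L\'evy--Khintchine formula and the L\'evy--It\^o decomposition), and your computation via the exponential formula for (compensated) Poisson integrals, together with the scaling argument killing any residual drift, is exactly the specialization being invoked. The integrability checks at $0$ and $\infty$ in the two regimes are the right points to verify and you have them correct.
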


Moreover, it can be proved (cf. Applebaum \cite{MR2512800} Theorem 1.2.21) that in the case $\alpha \in (0,2)\setminus \left\{1\right\}$ the characteristic exponent of a stable process $X$ of index $\alpha$ is equal to:
\bline
\begin{equation}
\phi(u) = \exp\left[ -\sigma|u|^{\alpha} \left( 1 - i\beta \sgn(u) \tan\left( \frac{\pi \alpha}{2} \right) \right) \right]. \label{eq:CF_sp}
\end{equation}
\eline

Here we have another parametrization of a stable process in terms of the skewness and scale parameters, denoted by $X\sim \stabs$. We can recover the $(c_-,c_+)$ parametrization solving:
\bline
\begin{eqnarray*}
\beta &=& \frac{c_+ - c_-}{c_+ + c_-},\\
\sigma &=& -(c_+ + c_-)\Gamma(-\alpha)\cos\left( \frac{\pi \alpha}{2} \right).
\end{eqnarray*}
\eline

\begin{remark}
The characteristic exponent of a strictly stable process and the Fourier transform of the fractional operators are intrinsically related as we will see in Remark \ref{remark_FTFO}.
\end{remark}

The following result concerns the finiteness of moments for stable processes. 
For the first part, the proof can be consulted in \cite{MR3964382} and the second one in \cite{MR1406564}.
\begin{proposition}\label{prop: StableMoments}
Let $\alpha \in (0,2)\setminus \left\{1\right\}$, $c_-, c_+ \geq 0$, not both zero, and consider a strictly stable process $X\sim \stabc$. Then, the following bounds are satisfied:
\begin{enumerate}
\item  For all $t>0$, $x\in \RR$ and $0<\gamma<1$,
\bline
\begin{equation*}
    \EE \left[ |X_t - x|^{-\gamma} \right] \leq S(\alpha,\gamma) t^{-\gamma/\alpha},
\end{equation*}
\eline
where $S(\alpha,\gamma)$ is a constant which depends on $\alpha$ and $\gamma$ and is independent of $x$. 
\item For all $t>0$ and $0 \leq \gamma<\alpha$,
\bline
\begin{equation*}
    \EE \left[ |X_t|^{\gamma} \right] < \infty. 
\end{equation*}
\eline If $\gamma\geq \alpha$ and $t>0$, $X_t\not \in L_\gamma$. 
\end{enumerate}
\end{proposition}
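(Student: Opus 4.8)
Here is how I would organize the argument; the two parts use quite different ingredients, so I would treat them separately.

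\medskip
\textbf{Part (1): negative moments.} The plan is to reduce everything, via self-similarity, to a single $x$-independent integral against the density of $X_1$. First I would record that $X_t$ has a bounded continuous density $p_t$ for every $t>0$: from \eqref{eq:CF_sp} the characteristic function of $X_t$ has modulus $e^{-t\sigma|u|^{\alpha}}$, which is integrable over $\RR$ since $\alpha>0$ and $\sigma>0$, so Fourier inversion gives $p_t$ with $\norm{p_t}_\infty\le\frac{1}{2\pi}\int_\RR e^{-t\sigma|u|^{\alpha}}\,du<\infty$. The strict stability $X_{ct}\stackrel{d}{=}c^{1/\alpha}X_t$ then yields $p_t(y)=t^{-1/\alpha}p_1(t^{-1/\alpha}y)$, and substituting $z=t^{-1/\alpha}y$ in $\EE[|X_t-x|^{-\gamma}]=\int_\RR|y-x|^{-\gamma}p_t(y)\,dy$ gives
\[
\EE\big[|X_t-x|^{-\gamma}\big]=t^{-\gamma/\alpha}\int_\RR|z-w|^{-\gamma}\,p_1(z)\,dz,\qquad w:=t^{-1/\alpha}x .
\]
It then remains to bound the last integral uniformly over $w\in\RR$. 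I would split at $|z-w|=1$: on $\{|z-w|\le 1\}$ bound $p_1\le\norm{p_1}_\infty$ and use $\int_{|u|\le1}|u|^{-\gamma}\,du=\frac{2}{1-\gamma}$ (this is precisely where $\gamma<1$ is needed), while on $\{|z-w|>1\}$ bound $|z-w|^{-\gamma}\le 1$ and use $\int_\RR p_1=1$. This produces $S(\alpha,\gamma):=\frac{2}{1-\gamma}\norm{p_1}_\infty+1$, independent of $x$ and of $t$, as required.

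\medskip
\textbf{Part (2): positive moments.} Here I would invoke the classical moment criterion for infinitely divisible laws (Sato \cite{MR1739520}, Thm.~25.3, or Bertoin \cite{MR1406564}): for a submultiplicative, locally bounded $g$—take $g(h)=1+|h|^{\gamma}$ with $\gamma>0$—one has $\EE[g(X_t)]<\infty$ for some (equivalently every) $t>0$ if and only if $\int_{|h|\ge1}g(h)\,\nu(dh)<\infty$, the case $\gamma=0$ being trivial. Substituting the explicit Lévy measure $\nu(dh)=(c_-\ii_{\{h<0\}}+c_+\ii_{\{h>0\}})|h|^{-\alpha-1}\,dh$ gives $\int_{|h|\ge1}|h|^{\gamma}\,\nu(dh)=(c_-+c_+)\int_1^{\infty}h^{\gamma-\alpha-1}\,dh$, which is finite exactly when $\gamma<\alpha$ and infinite when $\gamma\ge\alpha$. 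This simultaneously yields $\EE[|X_t|^{\gamma}]<\infty$ for $0\le\gamma<\alpha$ and $X_t\notin L_\gamma$ for $\gamma\ge\alpha$.

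\medskip
\textbf{Main obstacle.} There is no deep difficulty. The one point that requires care is the uniformity in $x$ in part (1), and that is exactly why I would recast the expectation through self-similarity so that the problem becomes the $x$-independent estimate $\sup_{w\in\RR}\int_\RR|z-w|^{-\gamma}p_1(z)\,dz<\infty$. One also has to make sure that $p_1$ is genuinely \emph{bounded} (and not merely locally integrable), which the integrability of the characteristic function provides; the local integrability of $|\cdot|^{-\gamma}$ for $\gamma<1$ then does the rest.
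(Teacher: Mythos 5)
Your argument is correct. The paper itself offers no proof of this proposition, deferring part (1) to Tsukada \cite{MR3964382} and part (2) to Bertoin \cite{MR1406564}; your write-up is essentially the standard argument behind those citations — for (1), boundedness of the density $p_1$ via integrability of the characteristic function (note $\sigma>0$ in both regimes $\alpha\in(0,1)$ and $\alpha\in(1,2)$), self-similarity to extract the factor $t^{-\gamma/\alpha}$, and the split at $|z-w|=1$ using local integrability of $|\cdot|^{-\gamma}$ for $\gamma<1$; for (2), Sato's submultiplicative moment criterion applied to $g(h)=1+|h|^{\gamma}$ together with the explicit L\'evy measure, which gives both the finiteness for $\gamma<\alpha$ and the failure for $\gamma\geq\alpha$ in one stroke. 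The only cosmetic point is that your constant $S(\alpha,\gamma)=\tfrac{2}{1-\gamma}\norm{p_1}_{\infty}+1$ also depends on $c_{-},c_{+}$ through $\norm{p_1}_{\infty}$, but since the process is fixed throughout, this matches the intended meaning of the statement (independence of $x$ and $t$ is what matters).
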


The following proposition is a corollary of Theorem 4.4.7 in Applebaum \cite{MR2512800}, it is a  version of It\^o's formula (termed predictable in \cite{MR2409011}) for stable processes. 

\begin{proposition}[It\^o's formula]\label{prop:ItoFormula}
Let $X\sim \stabc$  with $c_-, c_+ \geq 0$, not both zero, and $\alpha \in (0,1)$ and $f\in C^2_{1+,b}$. 
Then for any $t\geq 0$, with probability 1 we have 
\bline
\begin{eqnarray*}
f\left(X_t\right) &=& f\left(X_0\right) + \int_0^t \int_{\RR_0} \left[ f\left(X_{s-}+h\right) - f\left(X_{s-}\right) \right] \tilde{N}(ds,dh)\\
&&+ \int_0^t \int_{\RR_0} \left[ f\left(X_{s}+h\right) - f\left(X_{s}\right) \right] \nu(dh)ds,
\end{eqnarray*}
\eline
and in the case $\alpha \in (1,2)$ we have
\bline
\begin{eqnarray*}
f\left(X_t\right) &=& f\left(X_0\right) + \int_0^t \int_{\RR_0} \left[ f\left(X_{s-}+h\right) - f\left(X_{s-}\right) \right] \tilde{N}(ds,dh)\\
&&+ \int_0^t \int_{\RR_0} \left[ f\left(X_{s}+h\right) - f\left(X_{s}\right) - hf^{\prime}\left(X_{s}\right) \right] \nu(dh)ds.
\end{eqnarray*}
\eline
\end{proposition}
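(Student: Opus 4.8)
The plan is to obtain this as a specialization of the general It\^o formula for L\'evy-type stochastic integrals, Theorem 4.4.7 in Applebaum \cite{MR2512800}, to the L\'evy--It\^o decomposition of $X$ recalled above. A strictly $\alpha$-stable process has no Gaussian component and, in the fully compensated form written above, no drift; consequently the diffusion and finite-variation terms of the general formula disappear and the second-order term vanishes, so the only task is to reorganize the jump contributions into the claimed stochastic and compensator integrals and to check the integrability hypotheses under which the general formula applies to $f\in C^2_{1+,b}$.

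For $\alpha\in(0,1)$ the process $X_t=X_0+\int_0^t\int_{\RR_0}h\,N(ds,dh)$ has finite-variation paths, so the It\^o formula reduces to the elementary telescoping identity $f(X_t)=f(X_0)+\sum_{0<s\le t}[f(X_s)-f(X_{s-})]$, which we rewrite as $f(X_0)+\int_0^t\int_{\RR_0}[f(X_{s-}+h)-f(X_{s-})]\,N(ds,dh)$. Decomposing $N(ds,dh)=\tilde N(ds,dh)+\nu(dh)\,ds$ and using that the jump times are at most countable (so $X_{s-}=X_s$ for Lebesgue-a.e.\ $s$) splits the right-hand side into the compensated integral against $\tilde N$ and the absolutely continuous compensator $\int_0^t\int_{\RR_0}[f(X_s+h)-f(X_s)]\,\nu(dh)\,ds$, which is the asserted formula. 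For $\alpha\in(1,2)$ one applies the general formula to $X_t=X_0+\int_0^t\int_{\RR_0}h\,\tilde N(ds,dh)$: besides the jump term $f(X_{s-}+h)-f(X_{s-})$ integrated against $\tilde N$, the compensation of the (compensated) jump integral produces a compensator $\int_0^t\int_{\RR_0}[f(X_s+h)-f(X_s)-hf'(X_s)]\,\nu(dh)\,ds$, the $-hf'(X_s)$ correction being exactly the contribution of the mean-zero normalization of the jumps. This is the second display.

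It remains to check that $f\in C^2_{1+,b}$ makes all these integrals well defined, i.e.\ meets the integrability requirements of Applebaum's theorem. Split $\RR_0$ into $\{|h|\le1\}$ and $\{|h|>1\}$. On the small-jump region a second-order Taylor expansion gives $|f(x+h)-f(x)|\le\norm{f'}_\infty|h|$ and $|f(x+h)-f(x)-hf'(x)|\le\tfrac12\norm{f''}_\infty h^2$, and since $\nu(dh)=O(|h|^{-1-\alpha})\,dh$ with $\alpha<2$, both quantities (and their squares, needed for the integral against $\tilde N$) are $\nu$-integrable near $0$. On the large-jump region $\int_{|h|>1}|h|^{-1-\alpha}\,dh<\infty$ already for $\alpha>0$, which settles the $\nu$-integrable integrand $f(x+h)-f(x)$ arising when $\alpha\in(0,1)$, while for $\alpha\in(1,2)$ the growth bound $|f(x+h)-f(x)-hf'(x)|\lesssim|h|$ is $\nu$-integrable at infinity because $\int_{|h|>1}|h|\,|h|^{-1-\alpha}\,dh<\infty$ precisely when $\alpha>1$; the growth/boundedness conditions packaged into $C^2_{1+,b}$ are exactly calibrated for these two regimes. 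Finally the pointwise values $f(X_s)$ and $f'(X_s)$ in the compensators are integrable over $[0,t]$ by Proposition \ref{prop: StableMoments}, so Fubini applies and the two formulae follow.

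The only genuine point here is bookkeeping: matching the many terms of the general L\'evy-type It\^o formula --- drift, diffusion, compensated small jumps, large jumps --- against the stripped-down stable case, and justifying the passage from $N$ to $\tilde N$, which hinges on the convergence of the compensator integrals discussed above. No new analytic difficulty arises beyond these integrability checks, all of which are controlled by the defining bounds of $C^2_{1+,b}$ together with Proposition \ref{prop: StableMoments}.
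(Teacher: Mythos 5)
Your derivation follows exactly the route the paper takes: the proposition is stated there as a direct corollary of Theorem 4.4.7 of Applebaum with no further argument, so your specialization of the general L\'evy--It\^o formula (no Gaussian part, no drift) together with the small-jump/large-jump integrability checks is precisely the intended proof. The only point to watch is the large-jump region for $\alpha\in(0,1)$: since $f\in C^2_{1+,b}$ need not itself be bounded, $|f(x+h)-f(x)|$ is only $O(|h|)$ there and $\int_{|h|>1}|h|\,\nu(dh)=\infty$ when $\alpha<1$, so the splitting of $N$ into $\tilde{N}$ plus compensator on $\{|h|>1\}$ needs either boundedness of $f$ or a pathwise reading of that term --- an imprecision already present in the statement itself and immaterial for the paper, which only invokes the formula for $\alpha\in(1,2)$.
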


As pointed out by Engelbert and Kurenok \cite{MR3943123} in their Remark 1.1, it is a common mistake to state the It\^o formula in terms of the infinitesimal generator $\LL$, since functions in $C^2_{1+,b}$ are not in its domain. So, when we have a function $f \in C^2_{1+,b}$, we define:
\bline
\begin{equation}
    \LL f(x) := \int_{\RR_0}  \left[ f\left(X_{s}+h\right) - f\left(X_{s}\right) - hf^{\prime}\left(X_{s}\right) \right]  \nu(dh)ds. \label{eq:Lextension}
\end{equation}
\eline
If $f \in \SS \subset C^2_{1+,b}$, then it coincides with the infinitesimal generator, 
so that $\LL$ can be considered as an extension of the infinitesimal generator in the class $C^2_{1+,b}$.

In the next proposition we rewrite the definition of fractional derivative depending on the index $\alpha$, this representation is called the generator form. The fact that they are equivalent can be found on the book of Meerschaert and Sikorskii \cite{MR3971272} and the article of Kolokoltsov \cite{MR3377407}. 

\begin{proposition}[Generator form]
Let $f\in \mathcal{S}(\RR)$ and $\alpha \in (0,2)\setminus \{1\}$. Then the generator form of the left and right fractional derivatives are as follow:
\bline
\begin{eqnarray*}
D_-^{\alpha} f\left(x\right) &=& \begin{cases}
\displaystyle \frac{1}{\Gamma\left(-\alpha\right)} \int_{0}^{\infty} \frac{f\left(x - h\right) - f\left(x\right)}{h^{1+\alpha}} dh,\quad \text{if }\alpha \in (0,1)\\
\displaystyle \frac{1}{\Gamma\left(-\alpha\right)} \int_{0}^{\infty} \frac{f\left(x - h\right) - f\left(x\right) + hf^{\prime}\left(x\right)}{h^{1+\alpha}} dh,\quad \text{if }\alpha \in (1,2)\\
\end{cases}\\
D_+^{\alpha} f\left(x\right) &=& \begin{cases}
\displaystyle \frac{1}{\Gamma\left(-\alpha\right)} \int_{0}^{\infty} \frac{f\left(x + h\right) - f\left(x\right)}{h^{1+\alpha}} dh, \quad \text{if }\alpha \in (0,1)\\
\displaystyle \frac{1}{\Gamma\left(-\alpha\right)} \int_{0}^{\infty} \frac{f\left(x + h\right) - f\left(x\right) - hf^{\prime}\left(x\right)}{h^{1+\alpha}} dh, \quad \text{if }\alpha \in (1,2)\\
\end{cases}
\end{eqnarray*}
\eline
\end{proposition}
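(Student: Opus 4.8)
The plan is to derive each identity directly from Definition~\ref{def:RLFO} by a change of variables, differentiation under the integral sign, and $n$ integrations by parts, the constant being fixed at the end by the functional equation of $\Gamma$. I would write out the left-hand operators in full; the right-hand ones follow verbatim after replacing $h$ by $-h$ in the inner integral and carrying along the factor $(-1)^n$ from Definition~\ref{def:RLFO}.

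Set $n=1$ when $\alpha\in(0,1)$ and $n=2$ when $\alpha\in(1,2)$, so $n-1<\alpha<n$ and $D_-^{\alpha}f=\frac{d^n}{dx^n}W_-^{\,n-\alpha}f$. The substitution $h=x-t$ gives, for $f\in\mathcal{S}(\RR)$,
\[
W_-^{\,n-\alpha}f(x)=\frac{1}{\Gamma(n-\alpha)}\int_{0}^{\infty}h^{\,n-\alpha-1}f(x-h)\,dh,
\]
which converges absolutely since $n-\alpha-1>-1$ handles $h\downarrow 0$ and the rapid decay of $f$ handles $h\to\infty$. Reading the right-hand side as the convolution of $f$ with $k_{n-\alpha}$, where $k_\beta(y)=y_+^{\beta-1}/\Gamma(\beta)\in L^1_{\mathrm{loc}}(\RR)$, and using that $f$ and all its derivatives lie in $\mathcal{S}(\RR)$, one may differentiate $n$ times under the integral to get
\[
D_-^{\alpha}f(x)=\frac{1}{\Gamma(n-\alpha)}\int_{0}^{\infty}h^{\,n-\alpha-1}f^{(n)}(x-h)\,dh .
\]

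Next I integrate by parts $n$ times, lowering the exponent of $h$ by one at each step. The right quantities to integrate against are the Taylor remainders of $h\mapsto f(x-h)$ at $h=0$, namely $u(h)=f(x-h)-f(x)$ when $n=1$ and $v(h)=f(x-h)-f(x)+hf'(x)$ when $n=2$: these vanish to order $n$ at the origin, so the boundary terms at $0$ disappear, while the boundary terms at $\infty$ vanish because $f$ and $f'$ are bounded and the surviving powers of $h$ are negative ($\alpha>0$, resp.\ $\alpha>1$). One integration by parts yields $\int_0^\infty h^{-\alpha}f'(x-h)\,dh=-\alpha\int_0^\infty h^{-\alpha-1}u(h)\,dh$ in the case $n=1$, and two integrations by parts yield $\int_0^\infty h^{1-\alpha}f''(x-h)\,dh=\alpha(\alpha-1)\int_0^\infty h^{-\alpha-1}v(h)\,dh$ in the case $n=2$; the same order-of-vanishing estimates show these singular integrals converge absolutely. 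Therefore
\[
D_-^{\alpha}f(x)=\frac{-\alpha}{\Gamma(1-\alpha)}\int_0^\infty\frac{f(x-h)-f(x)}{h^{1+\alpha}}\,dh \quad(\alpha\in(0,1))
\]
and
\[
D_-^{\alpha}f(x)=\frac{\alpha(\alpha-1)}{\Gamma(2-\alpha)}\int_0^\infty\frac{f(x-h)-f(x)+hf'(x)}{h^{1+\alpha}}\,dh \quad(\alpha\in(1,2)).
\]
Since $\Gamma(z+1)=z\Gamma(z)$ gives $\Gamma(1-\alpha)=-\alpha\,\Gamma(-\alpha)$ and $\Gamma(2-\alpha)=\alpha(\alpha-1)\Gamma(-\alpha)$, both prefactors equal $1/\Gamma(-\alpha)$, which is the asserted formula for $D_-^{\alpha}$. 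For $D_+^{\alpha}$ one starts from $W_+^{\,n-\alpha}f(x)=\frac{1}{\Gamma(n-\alpha)}\int_0^\infty h^{\,n-\alpha-1}f(x+h)\,dh$, differentiates $n$ times, and runs the same integrations by parts against $f(x+h)-f(x)$ (resp.\ $f(x+h)-f(x)-hf'(x)$); together with the factor $(-1)^n$ this produces $\frac{1}{\Gamma(-\alpha)}\int_0^\infty\frac{f(x+h)-f(x)}{h^{1+\alpha}}\,dh$, resp.\ the analogous expression with $-hf'(x)$ in the numerator.

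The only points needing genuine care — and thus the main, though mild, obstacle — are the justification of differentiation under the integral sign (clean once $W_\pm^{\,n-\alpha}f$ is expressed as a convolution against $k_{n-\alpha}\in L^1_{\mathrm{loc}}$ with a Schwartz partner) and the verification that every integration-by-parts boundary term vanishes, which is exactly what forces the Taylor-remainder bookkeeping at $h=0$ and, as a byproduct, proves that the generator-form integrals are absolutely convergent. The remaining manipulation is just the elementary $\Gamma$-recursion.
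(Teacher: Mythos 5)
Your derivation is correct and complete: the change of variables, differentiation under the integral (justified by the Schwartz decay), the Taylor-remainder bookkeeping that kills the boundary terms at $0$ and $\infty$, and the $\Gamma$-recursion $\Gamma(1-\alpha)=-\alpha\Gamma(-\alpha)$, $\Gamma(2-\alpha)=\alpha(\alpha-1)\Gamma(-\alpha)$ all check out. The paper itself gives no proof of this proposition, deferring to Meerschaert--Sikorskii and Kolokoltsov, and your integration-by-parts argument is precisely the standard derivation found there.
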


From this generator form follows that the infinitesimal generator of strictly stable processes can be seen as a weighted sum of fractional derivatives given in Proposition \ref{prop: InfGen_FC}. Now we focus on the properties of the fractional operators that will lead us to the proof of the Inversion Theorem \ref{theorem: Inverse IG}.

The main reason to use the Lizorkin space $\Phi$, defined in the introduction \ref{def:Lizorkin}, is that the Fourier transform of fractional operators applied to functions in $\Phi$ behaves well. First, recall that for $f\in \mathcal{S}(\RR)$ the Fourier transform of $f$ is defined by:
\bline
\begin{equation*}
\FF\left[f\right](u) = \int_{\RR} f(x) e^{iux} dx.
\end{equation*}
\eline

\begin{proposition}[Fourier transform of fractional operators]\label{prop:FTfracop}
Let $f\in \Phi$ and $\alpha \geq 0$, then the Fourier transforms of the Riemann-Liouville fractional operators of index $\alpha$ satisfy the following identities:
\bline
\begin{eqnarray*}
\FF\left[D_-^{\alpha}f\right] \left(u\right) &=& \left( -iu \right)^{\alpha} \FF\left[f\right](u)\\
\FF\left[D_+^{\alpha}f\right] \left(u\right)  &=& \left( iu \right)^{\alpha} \FF\left[f\right](u)\\
\FF\left[I_-^{\alpha}f\right] \left(u\right) &=& \left( -iu \right)^{-\alpha} \FF\left[f\right](u)\\
\FF\left[I_+^{\alpha}f\right] \left(u\right)  &=& \left( iu \right)^{-\alpha} \FF\left[f\right](u).
\end{eqnarray*}
\eline
\end{proposition}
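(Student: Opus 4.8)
The plan is to reduce the four identities to two ingredients: the elementary behaviour of $\FF$ under integer‑order differentiation and integration \emph{on $\Phi$}, and the Fourier transform of the one‑sided power kernel. With the convention $\FF[f](u)=\int_\RR f(x)e^{iux}\,dx$, integration by parts gives $\FF[f'](u)=-iu\,\FF[f](u)$, so for $n\in\NN$ one has $\FF[D_-^n f](u)=(-iu)^n\FF[f](u)$ and, since $D_+^n=(-1)^n\tfrac{d^n}{dx^n}$, also $\FF[D_+^n f](u)=(iu)^n\FF[f](u)$. Dually, if $g\in\Phi$ then $\int_\RR g=\FF[g](0)=0$ because $\FF[g]\in\Psi$ vanishes at the origin; hence the primitive $G(x)=\int_{-\infty}^x g(t)\,dt=-\int_x^\infty g(t)\,dt$ is again Schwartz (indeed in $\Phi$), and from $G'=g$ we get $\FF[G](u)=(-iu)^{-1}\FF[g](u)$, the right‑hand side extending smoothly through $u=0$ because $\FF[g]$ vanishes there to infinite order. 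Iterating settles the case $\alpha\in\NN\cup\{0\}$ for $I_-^\alpha,D_-^\alpha$, and analogously (via $W_+$) for $I_+^\alpha,D_+^\alpha$.

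For non‑integer $\alpha>0$ I would write the Riemann–Liouville integrals as convolutions, which is immediate from Definition~\ref{def:RLFO}: $I_-^\alpha f=k_+^\alpha*f$ and $I_+^\alpha f=k_-^\alpha*f$, with $k_+^\alpha(x)=\tfrac{1}{\Gamma(\alpha)}x^{\alpha-1}\ii_{\{x>0\}}$ and $k_-^\alpha(x)=\tfrac{1}{\Gamma(\alpha)}(-x)^{\alpha-1}\ii_{\{x<0\}}$. The Fourier transforms of the kernels are $\FF[k_+^\alpha](u)=(-iu)^{-\alpha}$ and $\FF[k_-^\alpha](u)=(iu)^{-\alpha}$ with the principal branch of the power: for $0<\alpha<1$ one regularizes, $\FF[k_+^\alpha](u)=\tfrac{1}{\Gamma(\alpha)}\lim_{\epsilon\downarrow0}\int_0^\infty x^{\alpha-1}e^{(iu-\epsilon)x}\,dx=\lim_{\epsilon\downarrow0}(\epsilon-iu)^{-\alpha}=(-iu)^{-\alpha}$, the convergence of the improper integral being the standard Abel argument valid precisely for $\alpha<1$; for $\alpha=n+\beta$ with $n\in\NN$, $\beta\in(0,1)$, the group property of the fractional operators on $\Phi$ recalled in the introduction, together with the invariance of $\Phi$ under Riemann–Liouville operators (cf.\ \cite{MR1347689}), gives $\FF[I_-^\alpha f]=\FF\bigl[I_-^n(I_-^\beta f)\bigr]=(-iu)^{-n}\FF[I_-^\beta f]=(-iu)^{-n}(-iu)^{-\beta}\FF[f]=(-iu)^{-\alpha}\FF[f]$, and symmetrically for $I_+^\alpha$.

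The point requiring care is the convolution theorem itself, since $k_\pm^\alpha\notin L_1(\RR)$: as tempered distributions $\FF[k_+^\alpha*f]=\FF[k_+^\alpha]\,\FF[f]$, and the right‑hand side is the product of the distribution $(-iu)^{-\alpha}$ — locally integrable when $\alpha<1$, but with a finite‑order pole at the origin when $\alpha\geq1$ — with $\FF[f]=\psi\in\Psi$. Because $\psi$ vanishes to infinite order at $0$, the only singular point of $(-iu)^{-\alpha}$, the product $(-iu)^{-\alpha}\psi(u)$ is an honest Schwartz function, again in $\Psi$; this is exactly where passing from $\SS(\RR)$ to $\Phi$ is essential. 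Finally, for the derivatives with $n-1<\alpha\leq n$, Definition~\ref{def:RLFO} gives $D_-^\alpha f=\tfrac{d^n}{dx^n}I_-^{n-\alpha}f$ with $I_-^{n-\alpha}f\in\Phi$, whence $\FF[D_-^\alpha f](u)=(-iu)^n\FF[I_-^{n-\alpha}f](u)=(-iu)^n(-iu)^{-(n-\alpha)}\FF[f](u)=(-iu)^\alpha\FF[f](u)$, using $(-iu)^n(-iu)^{-(n-\alpha)}=(-iu)^\alpha$ for the principal branch (no winding occurs since $\arg(-iu)\in\{-\pi/2,\pi/2\}$); the right‑hand operators follow either by repeating the argument with $k_-^\alpha$ or by the reflection $f\mapsto f(-\,\cdot\,)$, which interchanges $W_-^\alpha$ and $W_+^\alpha$ and sends $\FF[f](u)$ to $\FF[f](-u)$, turning $-iu$ into $iu$. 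The main obstacle throughout is the rigorous treatment of $\FF[k_\pm^\alpha]$ for $\alpha\geq1$ and the ensuing distributional product — handled cleanly by the group‑property bootstrap above — together with keeping the branch of $(\pm iu)^{\pm\alpha}$ consistent between the integral and derivative formulas.
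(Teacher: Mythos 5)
Your proposal is correct, but note that the paper does not prove this proposition at all: it simply cites Samko--Kilbas--Marichev \cite{MR1347689}, Lemma 8.1. What you have written is essentially the standard argument behind that reference, made self-contained: integration by parts for the integer cases, the regularized computation $\lim_{\epsilon\downarrow 0}(\epsilon-iu)^{-\alpha}=(-iu)^{-\alpha}$ for the one-sided power kernel when $\alpha\in(0,1)$, the observation that multiplication by $(\pm iu)^{\mp\alpha}$ preserves $\Psi$ precisely because $\FF[f]$ vanishes to infinite order at the origin (which is the whole point of working in $\Phi$ rather than $\SS$), and reflection to pass between the $W_-$ and $W_+$ identities. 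The branch bookkeeping $(-iu)^n(-iu)^{-(n-\alpha)}=(-iu)^{\alpha}$ is consistent with the convention $(\pm iu)^{\alpha}=|u|^{\alpha}e^{\pm i\sgn(u)\alpha\pi/2}$ used in Remark \ref{remark_FTFO}. One point you should make explicit to avoid an appearance of circularity: the paper's Remark derives the continuity of $\beta\mapsto W^{\beta}_{\pm}$ (and, implicitly, the group property on $\Phi$) \emph{from} Proposition \ref{prop:FTfracop}, so your bootstrap $I_-^{\alpha}=I_-^{n}\circ I_-^{\beta}$ must not invoke that group property as stated there. It does not need to: for positive orders $a,b>0$ the identity $I_-^{a}I_-^{b}=I_-^{a+b}$ is the classical Dirichlet/Beta-function semigroup property, provable directly by Fubini on any reasonable domain, and that is the only instance your argument uses. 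With that caveat spelled out, the proof is complete and arguably a useful addition, since the paper leaves this key computation entirely to the literature.
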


The proof of this proposition can be found in the book of Samko, Kilbas and Marichev \cite{MR1347689} Lemma 8.1. Note that one of the main features of the Lizorkin space $\Phi$, is that the Fourier transforms are well behaved near zero in such a way that the product $(\pm iu)^{-\alpha}\FF[f](u)$ is well defined. 

\begin{remark}\label{remark_FTFO}
If we take the principal branch of logarithm, we have
\bline
\begin{eqnarray*}
\left( \pm iu \right)^{\alpha} &=& |u|^{\alpha} e^{\pm i\sgn(u) \alpha \pi / 2} \\
&=& |u|^{\alpha} \left( \cos\left( \frac{\alpha \pi} {2}\right) \pm i \sgn(u) \sin\left( \frac{\alpha \pi} {2} \right)\right),
\end{eqnarray*}
\eline
for all $u, \alpha \in \RR$. These are precisely the characteristic functions  of the one sided stable processes, see equation \eqref{eq:CF_sp} with $\sigma=1$ and $\beta=\pm1$.
\end{remark}

Now we are ready to prove Proposition \ref{Proposition:frac_comp}, regarding the crossed composition of Riemann-Liouville operators. First, note that from the definition of the Riemann-Liouville operators and their Fourier transforms it is easy to verify that composition of operators of the same side, left or right, commute and satisfy the semigroup property. However, the composition of crossed operators, left with right or vice versa, is not as direct as in the previous case.
\begin{proof}[Proof of Proposition \ref{Proposition:frac_comp}] 
Since the Fourier transform characterizes a function $\phi\in\Phi$, 
we will prove that the Fourier transform of both sides of the statement coincide. 
First, using the Fourier transform of fractional operators in Proposition \ref{prop:FTfracop} 
we have:
\bline
\begin{eqnarray*}
\FF\left[W_-^{\lambda}f\right] \left(u\right) &=& \left( -iu \right)^{-\lambda} \FF\left[f\right](u)\\
&=& |u|^{-\lambda}e^{i\frac{\pi}{2}\sgn(u)\lambda}\FF\left[f\right](u), \\
\FF\left[W_+^{\mu}f\right] \left(u\right)  &=& \left( iu \right)^{-\mu} \FF\left[f\right](u)\\
&=& |u|^{-\mu}e^{-i\frac{\pi}{2}\sgn(u)\mu}\FF\left[f\right](u).
\end{eqnarray*}
\eline
Then, for the LHS of equation \eqref{eq:cross_comp} we have:
\bline
\begin{eqnarray*}
\FF\left[ W_+^{\lambda} W_-^{\mu} \phi \right]\left(u\right) &=& |u|^{-\lambda}e^{i\frac{\pi}{2}\sgn(u)\lambda} |u|^{-\mu}e^{-i\frac{\pi}{2}\sgn(u)\mu} \FF\left[\phi\right]\left( u \right)\\
&=& |u|^{-(\lambda + \mu)}e^{i\frac{\pi}{2}\sgn(u)\left( \lambda - \mu \right)} \FF\left[\phi\right]\left( u \right).
\end{eqnarray*}
\eline
On the other hand, for the RHS of equation \eqref{eq:cross_comp} we have:
\bline
\begin{eqnarray*}
&& \frac{\sin\left(\mu\pi\right)}{\sin\left( \left(\lambda +\mu\right) \pi \right)} \FF\left[W_-^{\lambda + \mu} \phi\right]\left(u\right) +  \frac{\sin\left(\lambda \pi\right)}{\sin\left( \left(\lambda +\mu\right) \pi \right)} \FF\left[W_+^{\lambda + \mu} \phi\right]\left(u\right) \\
&=&\frac{\sin\left(\mu \pi\right)}{\sin\left( \left(\lambda +\mu\right) \pi \right)}|u|^{-(\lambda + \mu)} e^{-i\frac{\pi}{2}\sgn(u)\left( \lambda + \mu \right)} \FF\left[\phi\right]\left( u \right) \\
&&+   \frac{\sin\left(\lambda \pi\right)}{\sin\left( \left(\lambda +\mu\right) \pi \right)}|u|^{-(\lambda + \mu)} e^{i\frac{\pi}{2}\sgn(u)\left( \lambda + \mu \right)} \FF\left[\phi\right]\left( u \right).
\end{eqnarray*}
\eline
In order for the LHS and the RHS to be equal, it suffices to prove that:
\bline
\begin{equation*}
e^{i\frac{\pi}{2}\sgn(u)\left( \lambda - \mu \right)} = \frac{\sin\left(\mu \pi\right)}{\sin\left( \left(\lambda +\mu\right) \pi \right)} e^{-i\frac{\pi}{2}\sgn(u)\left( \lambda + \mu \right)} +\frac{\sin\left(\lambda \pi\right)}{\sin\left( \left(\lambda +\mu\right) \pi \right)} e^{i\frac{\pi}{2}\sgn(u)\left( \lambda + \mu \right)}.
\end{equation*}
\eline

This is equivalent for the real and imaginary parts agreeing and using the formula for the sum of angles we need to prove that
\bline
\begin{eqnarray*}
\cos\left( (\lambda - \mu)\frac{\pi}{2} \right) &=& \frac{\cos\left( (\lambda + \mu) \frac{\pi}{2} \right)\left[ \sin(\mu \pi) + \sin(\lambda \pi) \right]}{\sin\left( (\lambda + \mu) \pi \right)}\\
&=& \frac{ \sin(\mu \pi) + \sin(\lambda \pi) }{2\sin\left( (\lambda + \mu) \frac{\pi}{2}\right)}, \quad \text{and}\\
\sin\left( (\lambda - \mu)\sgn(u)\frac{\pi}{2} \right) &=& \frac{\sin\left( (\lambda + \mu) \sgn(u) \frac{\pi}{2} \right)\left[ \sin(\lambda \pi) - \sin(\mu \pi) \right]}{\sin\left( (\lambda + \mu) \pi \right)}\\
&=& \frac{\sgn(u)\left[ \sin(\lambda \pi) - \sin(\mu \pi) \right]}{2\cos\left( (\lambda + \mu) \frac{\pi}{2} \right)}.
\end{eqnarray*}
\eline
These trigonometric relations are proved in lemma \ref{lemma:trig_id1}, finishing the proof.
\end{proof}

Finally, we will be interested in some distributions acting on the Lizorkin space of test functions. For the definition of the action of Riemann-Liouville operators on distributions we refer to \cite{MR1347689}(Section 8.1) and Rubin \cite{MR1428214}(section 3).

\begin{definition}\label{def:Duality}
Let $f\in\Phi^{\prime}$ and $\alpha \in \RR$. The distributions $W_-^{\alpha} f$ and $W_+^{\alpha}f$ are defined by duality:
\bline
\begin{eqnarray*}
\left(W_-^{\alpha} f, \phi \right) &=& \left( f, W_+^{\alpha} \phi \right)\\
\left( W_+^{\alpha}f, \phi \right)  &=& \left(f,W_-^{\alpha} \phi \right),
\end{eqnarray*}
\eline
for any $\phi \in \Phi$ and $(g,\phi)$ denotes the evaluation of the distribution $g$ on the function $\phi$. 
\end{definition}

Note that $\delta$ belongs to $\Phi'$ (and, indeed, any Schwartz distribution) since $\Phi$ is contained in $\SS$. 
The infinitesimal generator $\LL$ associated to $X\sim \stabc$ corresponds to a linear combination of fractional derivatives as in Proposition \ref{prop: InfGen_FC}. Then, if we denote its dual operator by $\tilde{\LL}$, this corresponds to the infinitesimal generator associated to $\tilde{X}\sim S_{\alpha}(c_+,c_-)$. So that, if we take $f\in\Phi^{\prime}$, then for any $\phi \in \Phi$ we have
\bline
\begin{equation}
    \left(\LL f, \phi \right) = \left( f, \tilde{\LL} \phi \right). \label{eq:LLduality}
\end{equation}
\eline

In the next section we are going to use these results to prove the those outlined in the Introduction.

\section{Proof of the main results}\label{MainRes}
The objective of this section is to prove the Inversion Theorem \ref{theorem: Inverse IG}, the occupational Meyer-It\^o formula stated as Theorem \ref{theorem: MeyerIto} and the Doob-Meyer/semimartingale decomposition of Theorem \ref{EngKur_generalization} together with Corollary \ref{cor:Semi-DM}. 

The representation in Proposition \ref{prop: InfGen_FC} is crucial to work out the Inversion Theorem  \ref{theorem: Inverse IG}. While it is well understood that the left (right) fractional derivatives is the inverse of the left (right) fractional integral, at least in Lizorkin space, the action of the crossed compositions, as far as we know, has not been reported yet. 
For instance, in the symmetric case, where $c_- = c_+$, the infinitesimal generator corresponds to the fractional Laplacian $-(-\Delta)^{\alpha/2}$ and its inverse operator is known in the literature as the Riesz potential (cf. \cite{MR1347689}); however, since in this case the left and right fractional derivatives merge into the fractional Laplacian, a crossed composition does not appear.

Before the proof of the Inversion Theorem \ref{theorem: Inverse IG}, we will prove one more lemma regarding the composition of fractional derivatives and integrals. Since we are taking functions in the Lizorkin space, these compositions are well defined.
\begin{lemma}[Fractional compositions]\label{lemma:frac_comp}
Let $\phi \in \Phi$ and $\alpha > 0$ with $\alpha \notin \NN$, then the compositions of fractional derivatives and integrals of order $\alpha$ satisfy:
\bline
\begin{eqnarray*}
D_-^{\alpha} I_-^{\alpha} \phi\left(x\right) &=& \phi\left(x\right),\\
D_+^{\alpha} I_+^{\alpha}\phi\left(x\right) &=& \phi\left(x\right),\\
D_-^{\alpha} I_+^{\alpha} \phi\left(x\right) + D_+^{\alpha} I_-^{\alpha} \phi\left(x\right) &=& 2\cos(\alpha \pi)\phi(x).
\end{eqnarray*}
\eline
\end{lemma}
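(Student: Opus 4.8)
The plan is to prove all three identities by passing to the Fourier transform, using Proposition \ref{prop:FTfracop} and the fact that the Fourier transform is injective on $\Phi$ (so that it suffices to check the transformed identities pointwise in $u$, and moreover all the products of symbols appearing are well-defined because $\FF[\phi]$ vanishes to all orders at $u=0$). The first two identities are immediate: by Proposition \ref{prop:FTfracop}, $\FF[D_-^\alpha I_-^\alpha \phi](u) = (-iu)^\alpha (-iu)^{-\alpha}\FF[\phi](u) = \FF[\phi](u)$, and likewise with the $+$ subscript using $(iu)^\alpha(iu)^{-\alpha}=1$. These are just the same-side group property already alluded to in the remarks after Definition \ref{def:RLFO}, restricted to the case of exponents $\alpha$ and $-\alpha$.

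For the third (crossed) identity, I would compute
\bline
\begin{align*}
\FF\left[D_-^{\alpha} I_+^{\alpha}\phi\right](u) + \FF\left[D_+^{\alpha} I_-^{\alpha}\phi\right](u)
&= \left[(-iu)^\alpha (iu)^{-\alpha} + (iu)^\alpha (-iu)^{-\alpha}\right]\FF[\phi](u).
\end{align*}
\eline
Using the principal branch as in Remark \ref{remark_FTFO}, $(\pm iu)^{\alpha} = |u|^\alpha e^{\pm i\sgn(u)\alpha\pi/2}$ and $(\pm iu)^{-\alpha}=|u|^{-\alpha}e^{\mp i \sgn(u)\alpha\pi/2}$, so the bracketed factor equals $e^{-i\sgn(u)\alpha\pi} + e^{i\sgn(u)\alpha\pi} = 2\cos(\sgn(u)\alpha\pi) = 2\cos(\alpha\pi)$, since cosine is even. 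Hence the Fourier transform of the left-hand side equals $2\cos(\alpha\pi)\FF[\phi](u)$, which is the Fourier transform of the right-hand side; injectivity of $\FF$ on $\Phi$ then finishes the proof.

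Alternatively, and perhaps more in keeping with the preceding exposition, one can derive the third identity directly from Proposition \ref{Proposition:frac_comp} by specializing $\lambda = -\alpha$, $\mu = \alpha$ (note $\lambda+\mu = 0 \notin \ZZ$ fails, so one must instead take $\lambda=\alpha,\mu=-\alpha$ in $W_+^\lambda W_-^\mu$, i.e. $W_+^{-\alpha}W_-^{\alpha} = D_+^\alpha I_-^\alpha$ after reindexing, and similarly $W_-^{-\alpha}W_+^{\alpha}=D_-^\alpha I_+^\alpha$); here one should be careful that $\lambda+\mu=0$ is an excluded integer value, so the clean route is to apply \eqref{eq:cross_comp} with generic parameters and take a limit, or simply to argue directly via Fourier transforms as above, which sidesteps the $\lambda+\mu\in\ZZ$ restriction entirely. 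For this reason I expect the only genuine subtlety — and the main thing to get right — is the bookkeeping of which crossed composition corresponds to which pair of signed exponents and the verification that the branch-of-logarithm conventions make the symbol products combine to $2\cos(\alpha\pi)$; everything else is a one-line Fourier computation. I would therefore present the proof via Proposition \ref{prop:FTfracop} and Remark \ref{remark_FTFO} directly, noting that the hypothesis $\alpha\notin\NN$ guarantees $\cos(\alpha\pi)\neq \pm 1$ is not needed here but is consistent with the non-triviality of the crossed terms.
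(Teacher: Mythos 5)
Your proof is correct, but for the third (crossed) identity it takes a genuinely different route from the paper. The paper does not compute the Fourier symbols of the crossed compositions directly; instead it applies Proposition \ref{Proposition:frac_comp} twice with generic $\lambda,\mu$, sums the two resulting expressions, sets $\lambda=\alpha$ and takes the limit $\mu\to-\alpha$ (invoking continuity of $\mu\mapsto W_\pm^\mu$ on Lizorkin space), evaluating $\lim_{\mu\to-\alpha}\bigl(\sin(\mu\pi)+\sin(\alpha\pi)\bigr)/\sin\bigl((\alpha+\mu)\pi\bigr)=\cos(\alpha\pi)$ by l'H\^opital. This is exactly the limit workaround you flag as necessary because $\lambda+\mu=0$ violates the hypothesis $(\lambda+\mu)\notin\ZZ$ of Proposition \ref{Proposition:frac_comp}, so your diagnosis of the subtlety matches the paper's actual strategy. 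Your primary argument — multiplying the symbols $(\mp iu)^{\alpha}(\pm iu)^{-\alpha}$ from Proposition \ref{prop:FTfracop} and Remark \ref{remark_FTFO} to get $e^{\mp i\sgn(u)\alpha\pi}$, summing to $2\cos(\alpha\pi)$, and concluding by injectivity of $\FF$ on $\Phi$ — is shorter and sidesteps both the excluded integer value and the limit/continuity argument; its only implicit requirement is the Lizorkin invariance needed to apply Proposition \ref{prop:FTfracop} to the inner operator's output, which the paper assumes throughout anyway. Since the paper's own proof of Proposition \ref{Proposition:frac_comp} is itself a Fourier-symbol computation, the two routes are ultimately the same machinery, but yours applies it once rather than through the intermediate crossed-composition formula and a limit.
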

\begin{proof}

The first two equations as well as the fact that all the compositions of fractional operators commute follow from Proposition \ref{prop:FTfracop}. For the last equation, 
we use 
Proposition \ref{Proposition:frac_comp} with $\lambda = \alpha$ and $\mu \to -\alpha$, to get the result. This last limit can be taken since the composition groups $\mu\to W^\mu_{\pm}$ are continuous on Lizorkin space. 

Using equation \eqref{eq:cross_comp} twice to obtain both cross compositions,  we have:
\bline
\begin{eqnarray}
W_-^{\lambda} W_+^{\mu}\phi \left(x\right) &+& W_+^{\lambda} W_-^{\mu}  \phi \left(x\right) \nonumber\\
&=& \frac{\sin\left(\lambda \pi\right)}{\sin\left( \left(\lambda +\mu\right) \pi \right)} W_-^{\lambda + \mu} \phi \left(x\right) +  \frac{\sin\left(\mu \pi \right)}{\sin\left( \left(\lambda +\mu\right) \pi \right)} W_+^{\lambda + \mu} \phi \left(x\right) \nonumber\\
&&+ \frac{\sin\left(\mu \pi\right)}{\sin\left( \left(\lambda +\mu\right) \pi \right)} W_-^{\lambda + \mu} \phi \left(x\right) +  \frac{\sin\left(\lambda \pi \right)}{\sin\left( \left(\lambda +\mu\right) \pi \right)} W_+^{\lambda + \mu} \phi \left(x\right) \nonumber\\
&=& \left[ \frac{\sin\left(\lambda \pi\right)}{\sin\left( \left(\lambda +\mu\right) \pi \right)} + \frac{\sin\left(\mu \pi\right)}{\sin\left( \left(\lambda +\mu\right) \pi \right)} \right] W_-^{\lambda + \mu} \phi \left(x\right) \nonumber\\
&&+ \left[ \frac{\sin\left(\mu \pi \right)}{\sin\left( \left(\lambda +\mu\right) \pi \right)}  +  \frac{\sin\left(\lambda \pi \right)}{\sin\left( \left(\lambda +\mu\right) \pi \right)} \right] W_+^{\lambda + \mu} \phi \left(x\right). \label{eq:sumcrosses}
\end{eqnarray}
\eline

Moreover, 
by l'H\^opital's rule we have:
\bline
\begin{eqnarray*}
\lim_{\mu \to -\alpha} \left( \frac{\sin\left(\mu \pi\right) +\sin\left(\alpha \pi \right)}{\sin\left( \left(\alpha +\mu\right) \pi \right)} \right) \
 =\cos(\alpha \pi).
\end{eqnarray*}
\eline
Finally, with $\lambda = \alpha$ and taking the limit $\mu \to -\alpha$ in equation \eqref{eq:sumcrosses} we have:
\bline
\begin{eqnarray*}
D_-^{\alpha} I_+^{\alpha} \phi\left(x\right) &+& D_+^{\alpha} I_-^{\alpha} \phi\left(x\right) \\
&&= \lim_{\mu \to -\alpha}  \;_{-\infty}W_x^{\alpha} \;_xW_{\infty}^{\mu}\phi\left(x\right) +  \;_xW_{\infty}^{\alpha}\;_{-\infty}W_x^{\mu} \phi\left(x\right) \\
&&= \lim_{\mu \to -\alpha} \left[ \frac{\sin\left(\alpha \pi\right)}{\sin\left( \left(\alpha +\mu\right) \pi \right)} + \frac{\sin\left(\mu \pi\right)}{\sin\left( \left(\alpha +\mu\right) \pi \right)} \right] W_-^{\alpha + \mu} \phi \left(x\right)\\
&&+ \lim_{\mu \to -\alpha} \left[ \frac{\sin\left(\mu \pi \right)}{\sin\left( \left(\alpha +\mu\right) \pi \right)}  +  \frac{\sin\left(\alpha \pi \right)}{\sin\left( \left(\alpha +\mu\right) \pi \right)} \right] W_+^{\alpha + \mu} \phi \left(x\right)\\
&&= 2\cos(\alpha \pi)\phi\left(x\right).
\end{eqnarray*}
\eline
Where we used that $W^0$ is the identity operator as in Definition \ref{def:RLFO}.
\end{proof}
We are ready to prove the invertibility of the infinitesimal generator $\LL$ in $\Phi$ 
and its expression as a weighted sum of fractional integrals of order $\alpha  \in (0,2)\setminus \left\{1\right\}$.
\begin{proof}{(Inversion Theorem \ref{theorem: Inverse IG})} 
Define the operator $\GG$ as
\bline
\begin{equation*}
    \GG \phi\left(x\right) = K_- I_-^{\alpha}\phi\left(x\right) + K_+ I_+^{\alpha}\phi\left(x\right),
\end{equation*}
\eline
we will prove that $\GG \left( \LL \phi \right) = \LL \left( \GG \phi \right) = \phi$, so that $\LL$ is invertible and $\LL^{-1} = \GG$. 
By our definition of $\GG$, we have:
\bline
\begin{eqnarray*}
\GG \left( \LL \phi \left(x\right)\right) &=& \GG \left(  M_- D_-^{\alpha}\phi\left(x\right)  +  M_+ D_+^{\alpha}\phi\left(x\right) \right) \\
&=& K_- M_- I_-^{\alpha} \left( D_-^{\alpha}\phi\left(x\right)\right)  +  K_- M_+ I_-^{\alpha} \left(  D_+^{\alpha}\phi\left(x\right) \right) \\
&+& K_+ M_- I_+^{\alpha} \left( D_-^{\alpha}\phi\left(x\right)\right)   +  K_+M_+ I_+^{\alpha} \left(D_+^{\alpha}\phi\left(x\right) \right)
\end{eqnarray*}
\eline
Substituting the values of $K_-$ and $K_+$, and defining $M = M_-^2 + M_+^2 + 2M_- M_+\cos(\alpha \pi)$ to temporarily  ease notation,  and using Lemma \ref{lemma:frac_comp}, we get
\bline
\begin{eqnarray*}
\GG \left( \LL \phi \left(x\right)\right) &=& \frac{M_-^2}{M} \phi\left(x\right) +  \frac{M_+^2}{M} \phi\left(x\right) + \frac{M_- M_+}{M} I_-^{\alpha} D_+^{\alpha}\phi\left(x\right) + \frac{M_+ M_-}{M}I_+^{\alpha} D_-^{\alpha}\phi\left(x\right) \\
&=& \frac{M_-^2 + M_+^2 + 2M_- M_+\cos(\alpha \pi)}{M} \phi\left(x\right)\\
&=& \phi\left(x\right).
\end{eqnarray*}
\eline

We conclude that $(\GG\circ \LL)\phi=\phi$ 
and analogous computations prove that $(\LL\circ \GG)\phi=\phi$. 
\end{proof}

Considering the following generalized functions in the dual space $\Phi^{\prime}$, using Definition \ref{def:Duality} 
we will prove an important relationship between the Dirac $\delta$ distribution and the power functions, which are strongly related with the strictly stable processes. Moreover, the following lemma could be regarded as the key result to obtain Tanaka type formulae. 

\begin{lemma}\label{lemma: FracIntDelta}
If $\lambda >0$, then, the (generalized) functions $f_+^{\lambda}(x):= x^{\lambda} \ii_{\{ x>0 \}}$ and $f_-^{\lambda}(x):=|x|^{\lambda} \ii_{\{ x<0 \}}$ belong to $\Phi^{\prime}$ and 
\bline
\begin{eqnarray*}
f_+^{\lambda}(x) &=& \Gamma(\lambda+1) I_-^{\lambda+1} \delta\left(x\right),\\
f_-^{\lambda}(x) &=& \Gamma(\lambda+1) I_+^{\lambda+1} \delta\left(x\right).
\end{eqnarray*}
\eline
Therefore, 
\bline
\begin{eqnarray*}
 \LL^{-1}(\delta) = F^{\alpha, c_-,c_+}. 
\end{eqnarray*}
\eline
\end{lemma}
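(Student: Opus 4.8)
The plan is to obtain the two distributional identities by a direct pairing against the duality definition of fractional integrals (Definition \ref{def:Duality}), and then to read off $\LL^{-1}(\delta)=F$ from the inversion formula \eqref{eq: Inverse IG} after matching constants. First I would check that $f_\pm^\lambda\in\Phi'$: since $\lambda>0$ the functions $x\mapsto x^\lambda\ii_{\{x>0\}}$ and $x\mapsto|x|^\lambda\ii_{\{x<0\}}$ are locally integrable and of polynomial growth, hence tempered distributions, and restricting them to $\Phi\subset\SS$ gives elements of $\Phi'$, with $(f_+^\lambda,\phi)=\int_0^\infty x^\lambda\phi(x)\,dx$ and $(f_-^\lambda,\phi)=\int_{-\infty}^0|x|^\lambda\phi(x)\,dx$ finite for every $\phi\in\Phi$.

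For the core step, fix $\phi\in\Phi$. By Definition \ref{def:Duality} (note $I_\pm^{\lambda+1}=W_\pm^{\lambda+1}$ since $\lambda+1>0$) and the explicit formula for the right Riemann--Liouville integral in Definition \ref{def:RLFO},
\begin{equation*}
(I_-^{\lambda+1}\delta,\phi)=(\delta,W_+^{\lambda+1}\phi)=W_+^{\lambda+1}\phi(0)=\frac{1}{\Gamma(\lambda+1)}\int_0^\infty t^\lambda\phi(t)\,dt=\frac{1}{\Gamma(\lambda+1)}(f_+^\lambda,\phi),
\end{equation*}
which is the first identity; the second follows identically from $W_-^{\lambda+1}\phi(0)=\frac{1}{\Gamma(\lambda+1)}\int_{-\infty}^0|t|^\lambda\phi(t)\,dt$. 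No convergence issue arises since $\phi$ is rapidly decreasing.

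To conclude, since $\alpha\in(1,2)$ I would apply the above with $\lambda=\alpha-1>0$. As $(x^+)^{\alpha-1}=f_+^{\alpha-1}(x)$ and $(x^-)^{\alpha-1}=f_-^{\alpha-1}(x)$, Definition \ref{definition:TanakaFunction} reads, in $\Phi'$, $F=\kappa_+f_-^{\alpha-1}+\kappa_-f_+^{\alpha-1}=\Gamma(\alpha)\big(\kappa_-I_-^\alpha\delta+\kappa_+I_+^\alpha\delta\big)$, and a one-line computation with $M_\pm=c_\pm\Gamma(-\alpha)$ gives $\Gamma(\alpha)\kappa_\pm=K_\pm$ with $K_\pm$ as in Theorem \ref{theorem: Inverse IG}, so $F=K_-I_-^\alpha\delta+K_+I_+^\alpha\delta$. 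This is exactly the right-hand side of \eqref{eq: Inverse IG} evaluated at $\delta\in\Phi'$: the operators $I_\pm^\alpha$ act on $\Phi'$ by Definition \ref{def:Duality}, and transposing the proof of Theorem \ref{theorem: Inverse IG} through \eqref{eq:LLduality} shows that $\GG:=K_-I_-^\alpha+K_+I_+^\alpha$ remains a two-sided inverse of $\LL$ on $\Phi'$ (its transpose is $K_-I_+^\alpha+K_+I_-^\alpha$, which by Theorem \ref{theorem: Inverse IG} applied to $\tilde X\sim S_\alpha(c_+,c_-)$ is $\tilde\LL^{-1}$, so $(\LL\GG\psi,\phi)=(\GG\psi,\tilde\LL\phi)=(\psi,\tilde\LL^{-1}\tilde\LL\phi)=(\psi,\phi)$, and symmetrically). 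Hence $\LL^{-1}(\delta)=\GG\delta=F$.

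I expect the only delicate point to be this last one: making precise the sense in which the inversion operator of Theorem \ref{theorem: Inverse IG}, a priori defined on $\Phi$, acts on $\delta\in\Phi'$ and stays the inverse of $\LL$ there, while tracking the $c_-\leftrightarrow c_+$ swap (hence $M_-\leftrightarrow M_+$, $K_-\leftrightarrow K_+$, $I_-^\alpha\leftrightarrow I_+^\alpha$) introduced by duality so that the bookkeeping is consistent. Everything else is substitution into the definitions and elementary manipulation of $\Gamma$-factors.
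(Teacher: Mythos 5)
Your proof is correct and follows essentially the same route as the paper: the identities $f_\pm^{\lambda}=\Gamma(\lambda+1)I_\mp^{\lambda+1}\delta$ are established via the duality in Definition \ref{def:Duality} (the paper simply cites \cite[Ch.~2\S 8, p.~153]{MR1347689} for this computation, which you instead carry out directly), and the conclusion $\LL^{-1}\delta=F$ is obtained by applying the Inversion Theorem \ref{theorem: Inverse IG} transposed to $\Phi'$ and matching $\Gamma(\alpha)\kappa_\pm=K_\pm$. Your extra care with the $c_-\leftrightarrow c_+$ swap under transposition, showing that $K_-I_-^{\alpha}+K_+I_+^{\alpha}$ remains a two-sided inverse of $\LL$ on $\Phi'$, is a welcome precision that the paper leaves implicit.
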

\begin{proof}
The computation of $I^\alpha_\pm \delta$ is found in  \cite[Ch. 2\S 8,p. 153]{MR1347689}. 
It follows from Definition \ref{def:Duality}, 
the Inversion Theorem \ref{theorem: Inverse IG}
and the previous Lemma \ref{lemma: FracIntDelta} that:
\bline
\begin{eqnarray*}
 \LL^{-1}(\delta) &=& K_- I_-^{\alpha} \delta\left(x\right)  + K_+ I_+^{\alpha} \delta\left(x\right)  \\
  &=&   \frac{K_-}{\Gamma(\alpha)}f^{\alpha-1}_+(x) + \frac{K_+}{\Gamma(\alpha)}f^{\alpha-1}_-(x).
\end{eqnarray*}
\eline
If we substitute the values of $K_-$ and $K_+$ in terms of $\alpha,c_-$ and $c_+$ we will get that $\LL^{-1}(\delta) =F^{\alpha,c_-,c_+}$ in the sense of $\Phipr$ distributions. 
\end{proof}
Thus, Theorem \ref{theorem: Inverse IG} provides an insight to the function that satisfies the Tanaka formula.

The class of convolutions $f=F^{\alpha,c_-,c_+}*\mu$ in Definition \ref{def: ClassC}, is defined in such a way that the distribution induced by the measure $\mu$ coincides with $\LL f$, in the sense of $\Phipr$ distributions. As a consequence, $\mu$ can be considered as the extension of $\LL f$ from the Lizorkin space to 
the class $C_c$ of continuous functions with compact support. 
A precise version of this is contained in the following lemma. 
It is here that the \emph{completely balanced averages} of Lizorkin play a fundamental r\^ole: 
they constitute a way to approximate $\delta$ and other distributions from within Lizorkin space. 

\begin{lemma}\label{lemma: fracderdist}
Let $f \in \mathcal{C}^{\alpha,c_-,c_+}$ be given by $f=F*\mu$. 
Then, $\LL f = \mu$ in the $\Phipr$ sense;  
that is, for every $\phi\in \Phi$:
\bline
\begin{equation*}
    \left( \LL f, \phi\right) = \left( \mu, \phi\right).
\end{equation*}
\eline   
Finally, if $\mu$ is a finite measure with compact support, then 
$\phi\mapsto (\LL f,\phi)$ extends by continuity to  $\phi\mapsto (\mu,\phi)$ from $\Phi$ to $C_c$ with the topology of uniform convergence. 
\end{lemma}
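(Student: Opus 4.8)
The plan is to prove the two assertions in turn, the first by a direct distributional computation and the second by a soft density argument. For the identity $\LL f = \mu$ in $\Phipr$, fix $\phi \in \Phi$ and apply the adjoint relation \eqref{eq:LLduality}, so that $(\LL f,\phi) = (f,\tilde{\LL}\phi)$, where $\tilde{\LL}$ is the generator of $\tilde{X}\sim S_\alpha(c_+,c_-)$. Since $f=F*\mu$ has at most polynomial growth (by the estimates underlying the definition of $\ClassC$) while $\tilde{\LL}\phi \in \Phi \subset \SS$ is rapidly decreasing, the pairing is the absolutely convergent integral $\int (F*\mu)(x)\,\tilde{\LL}\phi(x)\,dx$. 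I would then invoke Fubini's theorem to rewrite this as $\int \big( \int F(x-y)\,\tilde{\LL}\phi(x)\,dx \big)\,\mu(dy)$; the interchange is licit because $|F(z)| \le C|z|^{\alpha-1}$ with $\alpha-1 \in (0,1)$, so by subadditivity $|F(x-y)| \le C(|x|^{\alpha-1}+|y|^{\alpha-1})$, and the resulting majorant is controlled by $\int |x|^{\alpha-1}|\tilde{\LL}\phi(x)|\,dx$, $\int |\tilde{\LL}\phi(x)|\,dx$ and the weight $\int |y|^{\alpha-1}\,|\mu|(dy) < \infty$ built into $\ClassC$. The identity then reduces to showing that the inner integral equals $\phi(y)$ for every $y$, for then $(\LL f,\phi) = \int \phi(y)\,\mu(dy) = (\mu,\phi)$.

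After substituting $z = x-y$, the inner integral reads $\int F(z)\,\tilde{\LL}\phi(z+y)\,dz$. The two structural facts I would use are that $\Phi$ is invariant under translations — translating $\phi$ multiplies its Fourier transform by the unimodular factor $e^{-iuy}$, which affects neither the infinite-order vanishing at $0$ nor the rapid decay characterizing $\Psi$ — and that $\tilde{\LL}$ commutes with translations; writing $\theta_y := \phi(\cdot+y) \in \Phi$, this gives $\tilde{\LL}\phi(z+y) = (\tilde{\LL}\theta_y)(z)$ with $\tilde{\LL}\theta_y \in \Phi$ (since fractional derivatives preserve $\Phi$). Viewing the locally integrable, polynomially growing function $F$ as an element of $\Phipr$, the inner integral is the pairing $(F,\tilde{\LL}\theta_y)$, which by Lemma \ref{lemma: FracIntDelta} equals $(\LL^{-1}\delta,\,\tilde{\LL}\theta_y) = (K_- I_-^\alpha\delta + K_+ I_+^\alpha\delta,\, \tilde{\LL}\theta_y)$. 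Transferring the fractional integrals onto the test function via Definition \ref{def:Duality} (the adjoint of $W_\pm^\alpha$ is $W_\mp^\alpha$), this becomes $\big(\delta,\,(K_- I_+^\alpha + K_+ I_-^\alpha)\tilde{\LL}\theta_y\big) = \big[(K_- I_+^\alpha + K_+ I_-^\alpha)\tilde{\LL}\theta_y\big](0)$. Finally, $K_- I_+^\alpha + K_+ I_-^\alpha$ is precisely $\tilde{\LL}^{-1}$, the inverse generator of $\tilde{X}$ furnished by the Inversion Theorem \ref{theorem: Inverse IG} applied to $S_\alpha(c_+,c_-)$ (the roles of $M_\pm$, hence of $K_\pm$, simply interchange), so the expression collapses to $(\tilde{\LL}^{-1}\tilde{\LL}\theta_y)(0) = \theta_y(0) = \phi(y)$, which closes the first part.

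For the last statement, assume $\mu$ is finite and compactly supported. By the first part, the functional $\phi \mapsto (\LL f,\phi)$ agrees on $\Phi$ with $\phi \mapsto (\mu,\phi) = \int \phi(y)\,\mu(dy)$, and this functional is Lipschitz for the uniform norm because $\left|\int \phi(y)\,\mu(dy)\right| \le |\mu|(\RR)\,\|\phi\|_\infty$. Since Lizorkin space is dense in $C_0(\RR)$ for the uniform norm — this is exactly the point at which the completely balanced averages of Lizorkin enter, furnishing for any $g \in C_c$ a sequence in $\Phi$ converging to $g$ uniformly — the functional extends uniquely and continuously to the uniform closure of $\Phi$, which contains $C_c$; and on $C_c$ the extension is $g \mapsto \int g(y)\,\mu(dy) = (\mu,g)$, again because $\mu$ is finite, so that $\int \phi_n(y)\,\mu(dy) \to \int g(y)\,\mu(dy)$ whenever $\phi_n \to g$ uniformly. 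This is the asserted continuous extension from $\Phi$ to $C_c$.

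I expect the only real obstacles to be bookkeeping, not ideas. The point requiring the most care is the Fubini interchange: this is precisely where the weight $\int |x|^{\alpha-1}|\mu|(dx) < \infty$ in the definition of $\ClassC$ together with the Schwartz decay of $\tilde{\LL}\phi$ are used, and one should also confirm that the general (first) part needs no stronger hypothesis on $\mu$ than the one defining $\ClassC$. The secondary recurring check is that every function produced along the way — translates of $\phi$, $\tilde{\LL}\phi$, and $I_\pm^\alpha \tilde{\LL}\theta_y$ — genuinely belongs to $\Phi$, so that Definition \ref{def:Duality} and the Inversion Theorem \ref{theorem: Inverse IG} may be applied; all of this rests on the Fourier-side characterization of $\Phi$, under which the singular multipliers $(\pm iu)^{\pm\alpha}$ and the unimodular factor $e^{-iuy}$ preserve the infinite-order vanishing at the origin that defines $\Psi$.
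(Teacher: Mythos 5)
Your proposal is correct and follows essentially the same route as the paper: the adjoint relation \eqref{eq:LLduality}, Fubini justified by the subadditivity of $x\mapsto x^{\alpha-1}$ and the moment condition on $\mu$, the identification of the inner integral with $(\LL^{-1}\delta_a,\tilde{\LL}\phi)=\phi(a)$ via Lemma \ref{lemma: FracIntDelta} and the Inversion Theorem, and the density of $\Phi$ in $C_c$ via Lizorkin's completely balanced averages for the extension. Your more explicit handling of the inner integral through translates $\theta_y$ is just an unwound version of the paper's one-line duality step and checks out, including the identification of $K_-I_+^\alpha+K_+I_-^\alpha$ with $\tilde{\LL}^{-1}$.
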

\begin{proof}
Let $\phi\in\Phi$. 
Since $\alpha-1\in (0,1)$, then $x\mapsto x^{\alpha-1}$ is subadditive on $[0,\infty)$. 
Hence, 
\begin{linenomath}
\begin{align*}
	\int |f(x) \phi(x)|\, dx
	&\leq \int\int   [ |x|^{\alpha-1}+|a|^{\alpha-1}] |\phi (x)|\, |\mu|(da) \, dx
	\\&\leq | \mu|(\mathbb{R})\int |x|^{\alpha-1}|\phi (x)|\, dx + \|\phi\|_1 \int |a|^{\alpha-1} |\mu|(da)<\infty. 
\end{align*}\end{linenomath}From equation \eqref{eq:LLduality} and Fubini's theorem (justified from the previous display applied to $\tilde \LL\phi$): 
\bline
\begin{align*}
    \left( \LL f, \phi\right) 
    &=\int_{-\infty}^{\infty}  f(x) \tilde{\LL} \phi(x)dx
    = \int_{-\infty}^{\infty} \int_{-\infty}^{\infty} F^{\alpha,c_-,c_+}(x-a) \mu(da) \tilde{\LL}\phi(x)dx\\
    &= \int_{-\infty}^{\infty} \int_{-\infty}^{\infty} F^{\alpha,c_-,c_+}(x-a)  \tilde{\LL}\phi(x)dx \mu(da)
    = \int_{-\infty}^{\infty} \left( \LL^{-1}\delta_a , \tilde{\LL}\phi\right) \mu(da)\\
    &= \int_{-\infty}^{\infty} \left( \delta_a , \tilde{\LL^{-1}}\tilde{\LL}\phi\right) \mu(da)
    = \int_{-\infty}^{\infty} \left( \delta_a , \phi\right) \mu(da)
    = \int_{-\infty}^{\infty} \phi(a) \mu(da),
\end{align*}
\eline
yielding that $\LL f=\mu$ on $\Phipr$. 

Lizorkin, in \cite{LizorkinTranslation} (cf. after Definition \ref{def:Lizorkin}), gives an approximation of $\delta$ in $\Phipr$ by means of a collection of functions $\kappa_\beta\in \Phi$ with the following property. 
If $\phi\in C_c$, then $\phi_\beta:=\kappa_\beta*\phi\to \phi$ uniformly on compact sets; note that $\phi_\beta\in \Phi$.
Indeed, Lizorkin writes $\kappa_\beta=\kappa^1_\beta-\kappa^2_\beta$ where $\kappa^1_\beta$ is a centered Gaussian density of variance $2\beta^2$. 
Hence $\kappa^1_\beta*\phi\to \phi$  uniformly if $\phi\in C_c$. 
On the other hand, the proof of Theorem 1 \cite[Ch. II\S 4]{LizorkinTranslation} tells us that $\kappa^2_\beta*\phi\to 0$ uniformly on compact sets since $\phi$ is integrable. 
Hence,  $\Phi$ is dense in $C_c$
If $\mu$ is finite and of compact  support then it also has a finite moment of order $\alpha-1$ and so, by the previous paragraph, $L(F*\mu)=\mu$ in $\Phi'$. 
The bounded linear functional $\phi\mapsto (\mu,\phi)$ on $C_c$ coincides with $\phi\mapsto (\LL f, \phi)$ on $\LL$, 
so that, by denseness,  the latter extends uniquely by continuity to $C_c$. 
\end{proof}

The result in Lemma \ref{lemma: fracderdist} with the Brownian motion case, where $\LL f(x) =\frac{1}{2}\Delta f(x)$, $\mathcal{C}^{2,c,c}$ corresponds to the class of differences of convex functions, whose second derivative are signed measures.

The following results are inspired by the work of Tsukada \cite{MR3964382}, which will be generalized  by a well-known procedure to construct approximations of a function, smoothing it with mollifiers (cf. \cite{MR1121940}, Theorem 6.22), allowing us to use It\^o formula \eqref{prop:ItoFormula}.

A positive real function $\rho \in C^{\infty}_c$, with support in $[-1,1]$ and integral equal to one, is said to be a mollifier. 
Then, if we consider a sequence of functions given by $\rho_n(x) = n\rho(nx)$ for all $n\in \NN$, this sequence converges weakly to the Dirac $\delta$ distribution in the sense of Schwartz distributions, that is
\bline
\begin{equation*}
\left| \int_{-\infty}^{\infty} \rho_n(x)\phi(x)dx - \phi(0) \right| \longrightarrow 0, \quad \text{as } n\to \infty,
\end{equation*}
\eline
for all $\phi \in \SS$.

Let $C^{\infty}_{1+,b}$ be the family of continuous functions with bounded derivatives of any order greater than or equal to one. We are going to use some bounds for the function $F^{\alpha,c_-,c_+}$ as well as of its increments, for a proof of the following results we refer to \cite{MR3964382}(cf. equation (3.9) of the proof of Theorem 3.1 and the proof of Lemma 3.1 in that reference). For fixed $\alpha, c_-$ and $c_+$, to ease the notation, we are going to write $F$ instead of $F^{\alpha,c_-,c_+}$ when there is no confusion with the parameters. 
Also, recall the constants $\kappa_\pm$ in the definition of $F$ and write $\kappa=\kappa_-\vee \kappa_+$, so that $0\leq F(x)\leq \kappa|x|^{\alpha-1}$. 

\begin{lemma}\label{lemma: Tsukada_results}
Let  $\alpha \in (1,2)$, $c_-, c_+ \geq 0$, not both zero, and consider a strictly stable process $X\sim \stabc$. Consider the function $F^{\alpha,c_-,c_+}$ in equation \eqref{TanakaFunction}, then the following results are satisfied:
\begin{enumerate}
\item  Let $(\rho_n)_{n \geq 1}$ as above, then $F_n := F^{\alpha,c_-,c_+}*\rho_n \in C^{\infty}_{1+,b}$ for all $n \in \NN$ and $F_n \to F$, uniformly on compact sets as $n\to \infty$.\\
\item  Let $|h|\leq 1$, $a \in \RR$, $s> 0$ and $\epsilon_0 \leq (\alpha-1)\wedge(2-\alpha)$, then we have:
\bline
\begin{eqnarray*}
    \EE \left[\left|F(X_{s_-} -a  +h) - F(X_{s_- }- a) \right|^2 \right]&\leq& c_1  S(\alpha,2+\epsilon_0 - \alpha) s^{(\alpha-2-\epsilon_0)/\alpha} |h|^{\alpha + \epsilon_0},
\end{eqnarray*}
\eline
where $c_1 = 20\kappa^2$  and the constant $S(\cdot,\cdot)$ as in Proposition \ref{prop: StableMoments}, and the same bound holds if we replace $F$ by $F_n$. 
Moreover, this bound satisfies:
\bline
\begin{eqnarray*}
    \int_0^t \int_{|h|\leq 1} s^{(\alpha-2-\epsilon_0)/\alpha} |h|^{\alpha + \epsilon_0} \nu(dh)ds &=& \left( \frac{c_+ + c_-}{\epsilon_0} \right) \left( \frac{\alpha}{2\alpha - \epsilon_0 - 2} \right) t^{(2\alpha - \epsilon_0 - 2)/\alpha}
    < \infty.
\end{eqnarray*}
\eline
\item  Let $|h|> 1$, $a \in \RR$ and $s> 0$, then we have:
\bline
\begin{eqnarray*}
    \EE \left[ \left|F(X_s - a +h) - F(X_s -a) \right|\right] &\leq& c_2 |h|^{\alpha -1}.
\end{eqnarray*}
\eline
where $c_2 = 4\kappa$ and the same bound holds if we replace $F$ by $F_n$. 
Moreover, this bound satisfies:
\bline
\begin{equation*}
    \int_0^t \int_{|h| > 1} |h|^{\alpha -1 } \nu(dh) ds = \left( c_+ + c_- \right)t < \infty.
\end{equation*}
\eline
\end{enumerate}
\end{lemma}

The following result is a corollary of Lemma \ref{lemma: Tsukada_results} and it will be useful in several steps of the Meyer-It\^o theorem's proof.
\begin{corollary}\label{lemma: mgl_bounds}
Under the assumptions of Lemma \ref{lemma: Tsukada_results}, let $f \in \ClassC$, such that $f = F* \mu\in C^{\alpha,c_-,c_+}$ 
with $\mu$ a finite Radon measure and consider $f_n = f* \rho_n$ for $n\in \NN$. Then we have:
\bline
\begin{eqnarray*}
    &&\EE \left[ \left|f(X_{s_-} +h) - f(X_{s_-}) \right|^2\right] \leq (\mu(\RR))^2  c_1 S(\alpha,2+\epsilon_0 - \alpha) s^{(\alpha-2-\epsilon_0)/\alpha} |h|^{\alpha + \epsilon_0}, \quad |h|\leq 1,\\
    &&\EE \left[ \left|f(X_{s_-} +h) - f(X_{s_-})\right| \right] \leq \mu(\RR)c_2 |h|^{\alpha - 1} , \quad |h|> 1,
\end{eqnarray*}
\eline
and the same bounds are satisfied if we replace $f$ with $f_n$. 
These bounds are an elements of $L^1\left((0,t)\times A, \BB((0,t)\times A), \leb \otimes \nu)\right)$, with $A = [-1,1]\setminus\{0\}$ and $A = [-1,1]^c$ respectively. 
\end{corollary}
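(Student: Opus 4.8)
The plan is to reduce everything to the three bounds in Lemma \ref{lemma: Tsukada_results} by exploiting that $f=F*\mu$ is an average of translates of $F$, and then to translate the two moment estimates into $L^1$ membership via the explicit integrability statements already recorded in that lemma.

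First I would write out $f(X_{s-}+h)-f(X_{s-}) = \int_{\RR} [F(X_{s-}+h-a)-F(X_{s-}-a)]\, \mu(da)$, which is legitimate because $\mu$ is finite and the convolution defining $f$ is pointwise finite (as noted right after Definition \ref{def: ClassC}). For the case $|h|\le 1$, I would apply the triangle inequality in $L^2(\PP)$ (Minkowski's integral inequality) to pull the $\mu$-integral outside the norm: $\|f(X_{s-}+h)-f(X_{s-})\|_{L^2(\PP)} \le \int_{\RR} \|F(X_{s-}-a+h)-F(X_{s-}-a)\|_{L^2(\PP)}\, |\mu|(da)$, and then bound each inner norm by the square root of the estimate in part (2) of Lemma \ref{lemma: Tsukada_results}, which is uniform in $a$. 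Squaring gives the claimed bound with the factor $(\mu(\RR))^2$ (using $|\mu|(\RR)=\mu(\RR)$ since the stated hypothesis treats $\mu$ as a finite positive measure; if $\mu$ is signed one simply writes $|\mu|(\RR)$). For $|h|>1$ the argument is the same but easier: apply the triangle inequality in $L^1(\PP)$ together with Fubini/Tonelli, pull out $|\mu|$, and use part (3) of Lemma \ref{lemma: Tsukada_results}, giving the factor $\mu(\RR)$ and the bound $c_2|h|^{\alpha-1}$. The assertion that the same bounds hold with $f_n=f*\rho_n$ in place of $f$ follows verbatim, since $f_n = F*(\mu*\rho_n)$ and $\mu*\rho_n$ is again a finite measure with total mass $\mu(\RR)$; alternatively, one invokes directly that the bounds in Lemma \ref{lemma: Tsukada_results} already hold with $F_n$ replacing $F$, so the same convolution-with-$\mu$ argument applies.

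For the $L^1$ integrability claims, I would simply integrate the derived pointwise-in-$(s,h)$ bounds against $\leb\otimes\nu$ over the indicated domains. On $(0,t)\times([-1,1]\setminus\{0\})$ the relevant quantity is $(\mu(\RR))^2 c_1 S(\alpha,2+\epsilon_0-\alpha)\int_0^t\int_{|h|\le 1} s^{(\alpha-2-\epsilon_0)/\alpha}|h|^{\alpha+\epsilon_0}\,\nu(dh)\,ds$, which is finite by the explicit computation in part (2) of Lemma \ref{lemma: Tsukada_results} (the exponent $(2\alpha-\epsilon_0-2)/\alpha>0$ since $\epsilon_0\le 2-\alpha<\alpha$, so the time integral converges near $0$). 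On $(0,t)\times[-1,1]^c$ the quantity is $\mu(\RR)c_2\int_0^t\int_{|h|>1}|h|^{\alpha-1}\,\nu(dh)\,ds = \mu(\RR)c_2(c_++c_-)t<\infty$, again by part (3). Since these dominating functions do not depend on $n$, they simultaneously witness the $L^1$ membership of the bounds for all the $f_n$.

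The main obstacle, such as it is, is purely bookkeeping: making sure the interchange of the $\mu$-integral with the $L^2(\PP)$ (resp. $L^1(\PP)$) norm is justified, which is exactly Minkowski's integral inequality and needs finiteness of $|\mu|$ together with the uniform-in-$a$ bounds of Lemma \ref{lemma: Tsukada_results}; and keeping straight that the constants in Lemma \ref{lemma: Tsukada_results} are indeed independent of the shift $a$, which is stated there. There is no analytic difficulty beyond this, so the proof is short.
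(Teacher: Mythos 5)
Your argument is correct and is essentially the paper's own: the paper disposes of this corollary in one sentence, invoking Lemma \ref{lemma: Tsukada_results} together with ``a Jensen-like inequality for finite measures,'' i.e.\ $\bigl(\int g\,d\mu\bigr)^2\le \mu(\RR)\int g^2\,d\mu$, which yields exactly the same $(\mu(\RR))^2$ constant as your Minkowski-integral-inequality route. Your handling of the $f_n$ case via $f_n=F*(\mu*\rho_n)$ and of the $L^1(\leb\otimes\nu)$ membership via the explicit integrals in parts (2) and (3) of that lemma matches the intended reasoning.
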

These results follow from the Lemma \ref{lemma: Tsukada_results} and an application of a Jensen-like inequality for finite measures. 

\begin{proof}[Proof of the Occupational Meyer-It\^o Formula (Theorem \ref{theorem: MeyerIto})]
Without loss of generality, we asume that $\mu$ is actually a positive measure, 
which was assumed to be finite with compact support and, therefore, with moments of order $\alpha$ and $2(\alpha-1)$. 
 Then, we have the representation:
 \bline
 \begin{eqnarray*}
 f\left(x\right) &=& \int_{-\infty}^{\infty} F^{\alpha,c_-,c_+}\left(x - a\right) \mu \left(da\right).
\end{eqnarray*}
\eline

Consider the sequences $F_n = F*\rho_n$ and $f_n = f*\rho_n = F*\rho_n*\mu$ as the infinitely differentiable approximations of $F$ and $f$ by the sequence $\{\rho_n\}_{n\geq 0}$, with $n\in \NN$, and we have that $f_n \to f$ uniformly on compact sets (\cite{MR3409135} Theorem 4.1: Properties of mollifiers).

Since $f_n \in C_{1+,b}^{\infty}\subset C_2$, using It\^o's formula (Proposition \ref{prop:ItoFormula}) we have:
\bline
\begin{equation} \label{eq: Itoaprox}
f_n(X_t) = f_n(X_0) + M_t^{n} + V_t^{n},
\end{equation}
\eline
where the last two terms are
\bline
\begin{align*}
M_t^{n} &=  \int_{0}^t \int_{\RR_0} \left[ f_n \left(X_{s-} + h \right) - f_n \left(X_{s-} \right)\right]\tilde{N}(ds,dh)\intertext{and}
V_t^{n} &= \int_{0}^t  \LL f_n (X_s) ds.
\end{align*}
\eline
Moreover, since the behavior of $M^n_t$ 
is different depending on the size of the jumps, we will consider $M^n_t =  M^{1,n}_t + M^{2,n}_t$, where
\bline
\begin{eqnarray*}
M_t^{1,n} &=&  \int_{0}^t \int_{h \leq 1} \left[ f_n\left(X_{s-} + h \right) - f_n \left(X_{s-} \right)\right]\tilde{N}(ds,dh),  \\
M_t^{2,n} &=&  \int_{0}^t \int_{h > 1} \left[ f_n\left(X_{s-} + h \right) - f_n \left(X_{s-} \right)\right]\tilde{N}(ds,dh).
\end{eqnarray*}
\eline
In a similar fashion, we define $M_t =  M_t^1 + M_t^2$, by replacing $f_n$ with $f$.

The proof consists in establishing the following steps: 
\begin{description}
\item[Step 1] $f(X_t)$ and $f_n(X_t)$ are in $L^1(\PP)$ and $f_n(X_t)\to f(X_t)$ in $L_1$. 
\item[Step 2] $M^{1}$ and $M^{1,n}$ are square integrable martingales  and $M^{1,n}_t\to M^1_t$ in $L_2$. 
\item[Step 3]  $M^{2}$ and $M^{2,n}$ are integrable martingales  and $M^{2,n}_t\to M^{2}_t$ in $L_1$. 
\item[Step 4]  $V^n_t\to \int L^a_t\, \mu(da)$ in $L_1$. 
\end{description}

Let's begin with \textbf{Step 1}. First, we provide a bound for $f(x)$ and $f_n(x)$ in terms of $x$ and which does not depend on $n$. 
Using that $\alpha-1\in (0,1)$, we have that $x\mapsto x^{\alpha-1}$ is subadditive on $[0,\infty)$, so that
\bline
\begin{eqnarray*}
0&\leq& f_n(x) = \int_{-\infty}^{\infty} f(x-y)\rho_n(y)\, dy\\
&=&  \int_{-\infty}^{\infty} \int_{-\infty}^{\infty} F(x-a-y)\rho_n(y)\, dy \,\mu(da)\\
&\leq&  \int_{-\infty}^{\infty} \int_{-1/n}^{1/n} 2\kappa (|x|^{\alpha -1} + |a|^{\alpha -1} +|y|^{\alpha -1})\rho_n(y)\, dy \, \mu(da)\\
&\leq&  2\kappa \int_{-\infty}^{\infty} (|x|^{\alpha -1} + |a|^{\alpha -1} + 1) \mu(da),
\end{eqnarray*}
\eline
which is finite for any $x\in \RR$ by the assumptions on $\mu$ and does not depend on $n$.

By similar arguments we have that 
\bline
\begin{equation}
    0\leq f(x) \leq 2\kappa \int_{-\infty}^{\infty} (|x|^{\alpha -1} + |a|^{\alpha -1} ) \mu(da). \label{eq:bound_fn}
\end{equation}
\eline
For the squared difference, using a Jensen-like inequality for finite measures, we have, 
\bline
\begin{eqnarray}
&&|f_n(x)-f(x)|^2 \leq 2|f_n(x)|^2 + 2|f(x)|^2 \nonumber \\
&\leq& 16 \kappa^2  \left(\int_{-\infty}^{\infty} (|x|^{\alpha -1} + |a|^{\alpha -1} + 1) \, \mu(da)\right)^2 \nonumber  \\
&\leq& 16 \kappa^2\mu(\RR) \int_{-\infty}^{\infty} \left((|x|^{\alpha -1} + |a|^{\alpha -1} + 1)\right)^2 \, \mu(da) \nonumber \\
&\leq& 48 \kappa^2\mu(\RR) \int_{-\infty}^{\infty} (|x|^{2\alpha -2} + |a|^{2\alpha -2} + 1) \, \mu(da)  \label{eq:flin_bound}
\end{eqnarray}
\eline
Then, similar arguments give
\bline
\begin{eqnarray*}
|f_n(X_t)|^2  &\leq&  12 \kappa^2\mu(\RR)  \int_{-\infty}^{\infty} (|X_t|^{2\alpha -2} + |a|^{2\alpha -2} + 1) \mu(da)\quad \text{and}\\
|f(X_t)|^2 &\leq&  8 \kappa^2 \mu(\RR) \int_{-\infty}^{\infty} (|X_t|^{2\alpha -2} + |a|^{2\alpha -2} ) \mu(da),\\
\end{eqnarray*}
\eline
and these bounds are independent of $n$ and belong to $L^1(\PP)$ since $0<2\alpha -2<\alpha$ and $\mu$ is a finite measure with a moment of order $2\alpha-2$. 
We can conclude that $f_n(X_t)$ and $f(X_t)$ are elements of $L^2(\PP)$. 
Moreover, 
by dominated convergence, we get
\bline
\begin{equation}
\lim_{n\to\infty}\EE\left[|f_n(X_t)-f(X_t)|^2\right]= \EE\left[\lim_{n\to\infty}|f_n(X_t)-f(X_t)|^2\right] = 0, \label{eq:L2fnf}
\end{equation}
\eline 
so that $f_n(X_t) \to f(X_t)$ in $L^2(\PP)$, which implies \textbf{Step 1}'s assertions.

Let's move to \textbf{Step 2}. In this case we are considering the jumps smaller than one, i.e. $h \leq 1$. To prove that $M^{1,n}$ is a square integrable martingale, according to Ikeda and Watanabe (\cite{MR1011252} section II.3),  we need to show that:
\bline
\begin{equation*}
    m^{1,n}_t:= \EE \left[  \int_{0}^t \int_{|h|\leq 1}  \left| f_n \left(X_{s-} + h \right) - f_n \left(X_{s-} \right) \right|^2 \nu(dh)ds  \right]  < \infty.
\end{equation*}
\eline

Since the integrand is positive and $(\mathcal{X},\BB(\mathcal{X}))$-measurable with $\mathcal{X}=(\Omega \times [-1,1]\setminus\{0\} \times [0,t])$, by the Fubini theorem (cf. \cite{MR1876169} Theorem 1.27), it suffices to prove the finiteness in any order of integration. Then, using the bound in Corollary \ref{lemma: mgl_bounds} for $|h|\leq 1$ and Lemma \ref{lemma: Tsukada_results}, we have:
\bline
\begin{eqnarray*}
    m^{1,n}_t &=&  \int_{0}^t \int_{|h|\leq 1}   \EE \left[\left| f_n \left(X_{s-} + h \right) - f_n \left(X_{s-} \right) \right|^2 \right] \nu(dh)ds \\
    &\leq&  \int_{0}^t \int_{|h|\leq 1}  (\mu(\RR))^2  c_1 S(\alpha,2+\epsilon_0 - \alpha) s^{(\alpha-2-\epsilon_0)/\alpha} |h|^{\alpha + \epsilon_0} \nu(dh)ds \\
    &\leq& (\mu(\RR))^2  c_1 S(\alpha,2+\epsilon_0 - \alpha)  \int_{0}^t \int_{|h|\leq 1}  s^{(\alpha-2-\epsilon_0)/\alpha} |h|^{\alpha + \epsilon_0} \nu(dh)ds \\
    &<& \infty.
\end{eqnarray*}
\eline
The result for $m^1_t$ follows from Corollary \ref{lemma: mgl_bounds} in a similar fashion. Hence, $M^1$ is also a square integrable martingale.  

In order to prove the convergence of $M^{1,n}_t \to M^1_t$ in $L^2(\PP)$, first note that according to Corollary \ref{lemma: mgl_bounds} we have 
\bline
\begin{eqnarray*}
\mathcal{M}^1_n &:=& \EE \left[  \left| f_n \left(X_{s-} + h \right) - f_n \left(X_{s-} \right) -  \left(f \left(X_{s-} + h \right) + f \left(X_{s-} \right) \right)\right|^2\right] \\
&\leq&  2\EE \left[  \left| f_n \left(X_{s-} + h \right) - f_n \left(X_{s-} \right) \right|^2\right] +   2\EE \left[ \left|f \left(X_{s-} + h \right) - f \left(X_{s-} \right)\right|^2\right]\\
&\leq& 4(\mu(\RR))^2  c_1 S(\alpha,2+\epsilon_0 - \alpha) s^{(\alpha-2-\epsilon_0)/\alpha} |h|^{\alpha + \epsilon_0},
\end{eqnarray*}
\eline
Thus, $(\mathcal{M}^1_n)_{n\geq 1}$ is dominated in $L^1\left((0,t)\times [-1,1]\setminus\{0\}, \BB((0,t)\times [-1,1]\setminus\{0\}), \leb \otimes \nu)\right)$.

We know that $(M^{1,n}_t-M^1_t)$ is a square integrable martingale for any $n\in \NN$, then using It\^o's isometry (\cite{MR2512800} p. 223)
and dominated convergence theorem for the sequence $(\mathcal{M}^1_n)_{n\geq 1}$ we have:
\bline
\begin{eqnarray*}
&&\lim_{n\to \infty}\EE \left[  \left| M^{1,n}_t-M^1_t \right|^2 \right]  \\
&&=\lim_{n\to \infty}  \int_{0}^t \int_{|h|\leq 1} \EE \left[  \left| f_n \left(X_{s-} + h \right) - f_n \left(X_{s-} \right) -  \left(f \left(X_{s-} + h \right) - f \left(X_{s-} \right) \right)\right|^2\right]  \nu(dh)ds  \\
&&=\int_{0}^t \int_{|h|\leq 1} \lim_{n\to \infty} \EE \left[  \left| f_n \left(X_{s-} + h \right) - f_n \left(X_{s-} \right) -  \left(f \left(X_{s-} + h \right) - f \left(X_{s-} \right) \right)\right|^2\right] \nu(dh)ds\\
&&=0.
\end{eqnarray*}
\eline
The convergence to zero of the last equation is a consequence of equation \eqref{eq:L2fnf} in \textbf{Step 1}. So that $M^{1,n}_t \to M^1_t$ in $L^2(\PP)$, ending with \textbf{Step 2}.

For \textbf{Step 3}, we are considering the jumps greater than one, i.e. $h > 1$. To prove that $M^{2,n}_t$ is a martingale, following Ikeda and Watanabe (\cite{MR1011252} section II.3) we must show:
\bline
\begin{equation*}
    m^{2,n}_t:= \EE \left[  \int_{0}^t \int_{|h|> 1}  \left| f_n \left(X_{s-} + h \right) - f_n \left(X_{s-} \right) \right| \nu(dh)ds   \right] < \infty.
\end{equation*}
\eline
Since the integrand is positive and $(\mathcal{X},\BB(\mathcal{X}))$-measurable with $\mathcal{X}=(\Omega \times [-1,1]^c \times [0,t])$, by the Fubini theorem 
it suffices to prove the finiteness in any order of integration. Then, using the bound in Corollary \ref{lemma: mgl_bounds} for $|h|> 1$ and Lemma \ref{lemma: Tsukada_results}, we have:
\bline
\begin{eqnarray*}
    m^{2,n}_t &=&  \int_{0}^t \int_{|h|> 1}   \EE \left[\left| f_n \left(X_{s-} + h \right) - f_n \left(X_{s-} \right) \right| \right] \nu(dh)ds \\
    &\leq&  \mu(\RR)\int_{0}^t \int_{|h|> 1} c_2 |h|^{\alpha - 1}   \nu(dh)ds \\
    &<& \infty.
\end{eqnarray*}
\eline
The result for $m^2_t$ follows by the same bounds in Corollary \ref{lemma: mgl_bounds}, so that $M^2_t$ is also a martingale. 
As in the previous step, to prove the convergence of $M^{2,n} \to M^2$ in $L^1(\PP)$, first note that according to Corollary \ref{lemma: mgl_bounds} we have 
\bline
\begin{eqnarray*}
\mathcal{M}^2_n &:=& \EE \left[  \left| f_n \left(X_{s-} + h \right) - f_n \left(X_{s-} \right) -  \left(f \left(X_{s-} + h \right) + f \left(X_{s-} \right) \right)\right|\right] \\
&\leq&  \EE \left[  \left| f_n \left(X_{s-} + h \right) - f_n \left(X_{s-} \right) \right|\right] +   \EE \left[ \left|f \left(X_{s-} + h \right) - f \left(X_{s-} \right)\right|\right]\\
&\leq& 2\mu(\RR) c_2 |h|^{\alpha - 1}.
\end{eqnarray*}
\eline
Thus, $(\mathcal{M}^2_n)_{n\geq 1}$ is dominated in $L^1\left((0,t)\times [-1,1]^c, \BB((0,t)\times [-1,1]^c), \leb \otimes \nu)\right)$.

We know that $(M^{2,n}_t-M^2_t)$ is a stochastic integral with respect to a Poisson random measure for any $n\in \NN$, then using Campbell's theorem (\cite{MR1207584} section 3.2)  
and dominated convergence theorem for the sequence $(\mathcal{M}^2_n)_{n\geq 1}$ we have:
\bline
\begin{eqnarray*}
&&\lim_{n\to \infty}\EE \left[  \left| M^{2,n}_t-M^2_t \right| \right] \leq   \\
&&\int_{0}^t \int_{|h| > 1} \lim_{n\to \infty} \EE \left[  \left| f_n \left(X_{s-} + h \right) - f_n \left(X_{s-} \right) -  \left(f \left(X_{s-} + h \right) - f \left(X_{s-} \right) \right)\right|\right] \nu(dh)ds.\\
&& = 0.
\end{eqnarray*}
\eline
The convergence to zero of the last equation is a consequence of equation \eqref{eq:L2fnf} in \textbf{Step 1}. So that $M^{2,n}_t \to M^2_t$ in $L^1(\PP)$, ending with \textbf{Step 3}.

By \textbf{Step 2} and \textbf{Step 3} we conclude that $M^{n}$ and $M$ in equation \eqref{eq: Itoaprox} are martingales and 
$M^n_t\to M_t$ in $L_1$. 

Finally, for \textbf{Step 4}, 
we have from equation \eqref{eq: Itoaprox} that:
\bline
\begin{eqnarray*}
 V_t^{n} &=& f_n(X_t) - f_n(X_0) - M_t^{n}\\
 &\stackrel{L^1(\PP)}{\to}& f(X_t) - f(X_0) - M_t,
\end{eqnarray*}
\eline
as $n \to \infty$, so that the limit $\lim_{n\to \infty} V_t^n(X_t) \in L^1(\PP)$. We just need to verify that this limit coincides with the one stated in the theorem.

We know that $f_n =F*(\rho_n*\mu)\in C_{1+,b}^{\infty} \cap C^{\alpha,c_-,c_+}$  is positive and measurable and that $\rho_n*\mu$ is a finite measure with compact support. 
Then $\LL f_n$ is well defined, positive and measurable as well. So, by the occupation formula we have:
\bline
\begin{equation*}
V_t^{n} = \int_{0}^t  \LL f_n (X_s) ds = \int_{-\infty}^{\infty} L_t^a \LL f_n (a) da.
\end{equation*}
\eline
Since $L_t^a(\omega) \in C_c$ for almost all $\omega \in \Omega$, Lemma \ref{lemma: fracderdist} tells us that
\bline
\begin{equation*}
V_t^{n} = \int_{-\infty}^{\infty} L_t^a \,  (\mu * \rho_n)(da),
\end{equation*}
\eline
and since $\rho_n \to \delta$ weakly as $n\to \infty$, then $(\mu * \rho_n) \to \mu$ weakly as $n\to \infty$ as well. 
Hence, 
\bline
\begin{equation*}
\left| \int_{-\infty}^{\infty} L_t^a (\mu * \rho_n)(da) -\int_{-\infty}^{\infty} L_t^a \mu(da) \right| \to 0, \quad \text{as $n\to \infty$}.
\end{equation*}
\eline

Steps 1-4 finish the proof of Theorem \ref{theorem: MeyerIto}. 
\end{proof}

For a first application, we have the Tanaka formula for asymmetric strictly stable processes.

\begin{corollary}[Tanaka formula]\label{corollary: TanakaFormula}
Let  $\alpha \in (1,2)$, $c_-, c_+ \geq 0$, not both zero, and consider a strictly stable process $X\sim \stabc$. Then, the Tanaka formula is satisfied:
\bline
\begin{equation}
F^{\alpha,c_-,c_+}\left(X_{t}-a \right) = F^{\alpha,c_-,c_+}\left(X_{0}-a\right) + M_{t}^a(X) + L_t^a(X), \label{TanakaFormula}
\end{equation}
\eline
where  $L_t^a(X)$ is the occupational local time at $a$ up to time $t$ of $X$ and  $M_t^a(X)$ is a square integrable martingale given by
\bline
\begin{equation*}
M_{t}^a(X) = \int_0^t \int_{\RR_0} \left[ F^{\alpha,c_-,c_+}\left(X_{s-}-a+h\right) - F^{\alpha,c_-,c_+}\left(X_{s-}-a\right) \right] \tilde{N}(ds,dh).
\end{equation*}
\eline
\end{corollary}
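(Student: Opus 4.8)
The plan is to obtain the Tanaka formula as the special case of the Occupational Meyer-It\^o Theorem \ref{theorem: MeyerIto} in which the representing measure is the Dirac mass $\delta_a$; throughout I write $F=F^{\alpha,c_-,c_+}$. First I would check that the translate $f:=F(\,\cdot\,-a)$ belongs to $\mathcal{C}^{\alpha,c_-,c_+}$ with associated measure $\mu=\delta_a$: indeed $F*\delta_a(x)=F(x-a)$ and $\int|x|^{\alpha-1}\,\delta_a(dx)=|a|^{\alpha-1}<\infty$, while $\delta_a$ is finite with compact support $\{a\}$, so every hypothesis of Theorem \ref{theorem: MeyerIto} is met. (Conceptually this is the correct measure because $\mathcal{L}^{-1}\delta=F$ by Lemma \ref{lemma: FracIntDelta} and $\mathcal{L}$ is translation invariant, whence $\mathcal{L} f=\delta_a$.) Applying Theorem \ref{theorem: MeyerIto} to this $f$ yields $f(X_t)=f(X_0)+M_t+\int_{-\infty}^{\infty}L_t^b(X)\,\delta_a(db)$; the local time integral collapses to $L_t^a(X)$ because $b\mapsto L_t^b(X)$ is a.s.\ continuous and compactly supported (Boylan--Barlow), the jump increment rewrites as $f(X_{s-}+h)-f(X_{s-})=F(X_{s-}-a+h)-F(X_{s-}-a)$ so that $M_t=M_t^a(X)$, and $f(X_t)=F(X_t-a)$, $f(X_0)=F(X_0-a)$; this is precisely \eqref{TanakaFormula}.

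What is not already contained in Theorem \ref{theorem: MeyerIto} is the square integrability of $M^a$: the proof of that theorem only produced $M^a\in L^2$ for the part carried by jumps of size $\le 1$ (Step 2), controlling the large-jump part merely in $L^1$ (Step 3). For $\mu=\delta_a$ I would instead invoke It\^o's isometry and verify
\[
\EE\!\left[\int_0^t\!\!\int_{\RR_0}\bigl|F(X_{s-}-a+h)-F(X_{s-}-a)\bigr|^2\,\nu(dh)\,ds\right]<\infty.
\]
On $\{|h|\le 1\}$ this is the bound in Lemma \ref{lemma: Tsukada_results}(2); on $\{|h|>1\}$, using $0\le F(x)\le\kappa|x|^{\alpha-1}$, $(u+v)^2\le 2u^2+2v^2$ and $|y+h|^{2\alpha-2}\le c_\alpha(|y|^{2\alpha-2}+|h|^{2\alpha-2})$, the integrand is at most $C_\alpha\kappa^2\bigl(|X_{s-}-a|^{2\alpha-2}+|h|^{2\alpha-2}\bigr)$; since $0<2\alpha-2<\alpha$, Proposition \ref{prop: StableMoments}(2) bounds $\EE[|X_{s-}-a|^{2\alpha-2}]$ and $\int_{|h|>1}(1+|h|^{2\alpha-2})\,\nu(dh)=(c_-+c_+)\int_1^\infty(h^{-\alpha-1}+h^{\alpha-3})\,dh<\infty$ because $\alpha<2$; integrating in $s\in[0,t]$ finishes. (Alternatively one may simply quote Tsukada \cite{MR3964382} for the square integrability.)

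I do not anticipate a serious obstacle: once the reduction to the $\mu=\delta_a$ case is made, everything follows directly from Theorem \ref{theorem: MeyerIto} except the $L^2$ estimate above, which is routine and closely parallels Lemma \ref{lemma: Tsukada_results}. The one step that deserves care is the collapse $\int_{-\infty}^{\infty}L_t^b(X)\,\delta_a(db)=L_t^a(X)$, which relies on the joint continuity --- and hence local boundedness, making the integral well defined --- of the occupational local time quoted after its definition.
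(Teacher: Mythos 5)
Your proposal is correct and follows essentially the same route as the paper: the paper's proof of Corollary \ref{corollary: TanakaFormula} is exactly the specialization of Theorem \ref{theorem: MeyerIto} to $f=F*\delta_a$, with the local time integral collapsing to $L_t^a(X)$. Your additional $L^2$ estimate for the large-jump part is a welcome extra (the paper's proof leaves the square integrability of $M^a$ implicit, since Theorem \ref{theorem: MeyerIto} only asserts the martingale property), and your bound via $0\le F(x)\le\kappa|x|^{\alpha-1}$ and $\int_{|h|>1}|h|^{2\alpha-2}\,\nu(dh)<\infty$ is sound.
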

\begin{proof}
Consider the unitary measure concentrated in $a$, that is $\delta_a(E)=1$ if $a\in E$ and zero otherwise, with $f(x) = \left(F^{\alpha,c_-,c_+} * \delta_a\right)(x) = F^{\alpha,c_-,c_+}(x-a)$, using the occupational Meyer-It\^o theorem we have:
\bline
\begin{eqnarray*}
F^{\alpha,c_-,c_+}(X_t-a) &=& F^{\alpha,c_-,c_+}\left(X_0-a\right) \\ &+&\int_{0}^{t}\int_{\mathbb{R}_{0}}\left[F^{\alpha,c_-,c_+}\left(X_{s-}-a+h\right)-F^{\alpha,c_-,c_+}\left(X_{s-}-a\right)\right]\tilde{N}\left(ds,dh\right)\\
&+& \int_{-\infty}^{\infty} L_t^{x}\left(X\right) \delta_a \left(dx\right),\\
&=& F^{\alpha,c_-,c_+}\left(X_0-a\right) +  M^a_t + L_t^a\left(X\right). 
\end{eqnarray*}
\eline
\end{proof}
We turn our attention to the power decomposition of Theorem \ref{EngKur_generalization}. Our first step will be to explicitly compute the infinitesimal generator of the power functions in Lemma \ref{lemma: FracIntDelta}.=
Let $\alpha \in (1,2)$, $c_-, c_+ \geq 0$ not both zero and $\alpha -1< \gamma <\alpha$. 
From Lemma \ref{lemma: FracIntDelta} we know that $f_\pm^{\gamma}$ belong to $\Phi^{\prime}$ and can be identified with the following fractional integrals:
\bline
\begin{eqnarray*}
f_+^{\gamma}(x) &=& \Gamma(\gamma + 1)I_-^{\gamma + 1} \delta\left(x\right),\\
f_-^{\gamma}(x) &=& \Gamma(\gamma + 1)I_+^{\gamma + 1} \delta\left(x\right).
\end{eqnarray*}
\eline
Consider the infinitesimal generator evaluated at $f^\gamma_+(x)$: 
with the constants $M_\pm$ as defined in Proposition \ref{prop: InfGen_FC}, we have
\bline
\begin{eqnarray*}
\LL f_+^{\gamma}\left(x\right) &=&   M_-D_-^{\alpha}f_+^{\gamma}\left(x\right) + M_+D_+^{\alpha}f_+^{\gamma}\left(x\right) \\
&=&   \Gamma(\gamma + 1) M_-D_-^{\alpha} I_-^{\gamma + 1} \delta\left(x\right)+  \Gamma(\gamma + 1) M_+D_+^{\alpha} I_-^{\gamma + 1} \delta\left(x\right)
\end{eqnarray*}
\eline
Using the fractional composition formulas in Lemma \ref{lemma:frac_comp}, we get
\bline
\begin{eqnarray*}
\LL f_+^{\gamma}\left(x\right) &=& \Gamma(\gamma + 1) M_- I_-^{\gamma- \alpha + 1} \delta\left(x\right) +  \Gamma(\gamma + 1) M_+ \frac{\sin\left((\gamma+1)\pi\right)}{\sin\left((\gamma-\alpha+1)\pi\right)} I_-^{\gamma- \alpha + 1} \delta\left(x\right) \\
&&+ \Gamma(\gamma + 1) M_+  \frac{\sin\left(-\alpha \pi\right)}{\sin\left((\gamma-\alpha+1)\pi\right)} I_+^{\gamma- \alpha + 1}\delta\left(x\right)    \\
&=&  \frac{\Gamma(\gamma + 1) M_-}{\Gamma(\gamma- \alpha+1)}f_+^{\gamma- \alpha}(x) +  \frac{\Gamma(\gamma + 1) M_+}{\Gamma(\gamma- \alpha+1)} \frac{\sin\left((\gamma+1)\pi\right)}{\sin\left((\gamma-\alpha+1)\pi\right)}  f_+^{\gamma- \alpha}(x)\\
&&+ \frac{\Gamma(\gamma + 1) M_+ }{\Gamma(\gamma- \alpha+1)}  \frac{\sin\left(-\alpha \pi\right)}{\sin\left((\gamma-\alpha+1)\pi\right)} f_-^{\gamma- \alpha}(x).
\end{eqnarray*}
\eline

For the function $f_-^{\gamma}(x)$, we can proceed similarly
to get
\bline
\begin{eqnarray*}
\LL f_-^{\gamma}\left(x\right) &=& \Gamma(\gamma + 1) M_-  \frac{\sin\left(-\alpha \pi\right)}{\sin\left((\gamma-\alpha+1)\pi\right)} I_-^{\gamma- \alpha + 1} \delta\left(x\right)\\
&&+ \Gamma(\gamma + 1) M_- \frac{\sin\left((\gamma+1)\pi\right)}{\sin\left((\gamma-\alpha+1)\pi\right)} I_+^{\gamma- \alpha + 1}\delta\left(x\right)  +  \Gamma(\gamma + 1) M_+ I_+^{\gamma- \alpha + 1} \delta\left(x\right)\\
&=&  \frac{\Gamma(\gamma + 1) M_-}{\Gamma(\gamma- \alpha+1)} \frac{\sin\left(-\alpha \pi\right)}{\sin\left((\gamma-\alpha+1)\pi\right)}  f_+^{\gamma- \alpha}(x)\\
&&+ \frac{\Gamma(\gamma + 1) M_- }{\Gamma(\gamma- \alpha+1)}  \frac{\sin\left((\gamma+1)\pi\right)}{\sin\left((\gamma-\alpha+1)\pi\right)} f_-^{\gamma- \alpha}(x) + \frac{\Gamma(\gamma + 1) M_+}{\Gamma(\gamma- \alpha+1)} f_-^{\gamma- \alpha}(x).
\end{eqnarray*}
\eline
Before we prove Theorem \ref{EngKur_generalization}, we need to undestand the constants $k_{\pm}\left( \alpha, \gamma, c_-, c_+ \right)$ that are used there. They play an important role in the bounded variation part of  the power decomposition \eqref{eq:EKgen}, because in order to be an increasing process, both need to be positive. The following lemma states the critical exponent $\gamma$ from which both $k_{\pm}\left( \alpha, \gamma, c_-, c_+ \right)$ are positive. Recall the definition of $c$ in Corollary \ref{cor:Semi-DM}. 

\begin{lemma}\label{lemma: const_Fournier}
Let $\alpha \in (1,2)$, $\gamma \in (\alpha-1,\alpha)$ and $k_{\pm}\left( \alpha, \gamma, c_-, c_+ \right)$ as in Theorem \ref{EngKur_generalization}. 
Define
\bline
\begin{equation*}
    \beta(a,c) := \frac{1}{\pi} \arccos\left( \frac{c^2(1-a^2)-(1+ac)^2}{c^2(1-a^2)+(1+ac)^2} \right) \in (\alpha-1,1),
\end{equation*}
\eline
where $a=\cos(\alpha \pi)$ and $c=\frac{\min(c_-,c_+)}{\max(c_-,c_+)}$. 
Then, if $c_-<c_+$ we have that $k_-\left( \alpha, \gamma, c_-, c_+ \right)$ is positive for all $\gamma \in (\alpha-1,\alpha)$ while $k_+\left( \alpha, \gamma, c_-, c_+ \right)$ is negative if $\gamma \in (\alpha-1, \beta(a,c))$ and positive if $\gamma \in ( \beta(a,c),1)$. The same conclusion follows for $c_+<c_-$ after switching the roles of the $k_{\pm}\left( \alpha, \gamma, c_-, c_+ \right)$.
\end{lemma}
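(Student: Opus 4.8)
The plan is to reduce the sign of each of the constants $k_\pm(\alpha,\gamma,c_-,c_+)$ to the sign of an explicit trigonometric function of the single variable $u:=\gamma-\alpha+1\in(0,1)$, and then to locate the unique zero of that function. First I would do the sign bookkeeping: for $\alpha\in(1,2)$ one has $\Gamma(-\alpha)>0$, and since $\gamma+1>0$ and $\gamma-\alpha+1\in(0,1)$ also $\Gamma(\gamma+1)/\Gamma(\gamma-\alpha+1)>0$ and $\sin((\gamma-\alpha+1)\pi)>0$. Multiplying the bracket defining $k_-$ by $\sin((\gamma-\alpha+1)\pi)$ and inserting $M_\pm=c_\pm\Gamma(-\alpha)$ shows that $\sgn k_-$ equals the sign of $c_+\sin(-\alpha\pi)+c_-\sin((\gamma+1)\pi)+c_+\sin((\gamma-\alpha+1)\pi)$, and similarly for $k_+$ with $c_-,c_+$ interchanged. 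By the stated symmetry --- the substitution $c_-\leftrightarrow c_+$ exchanges $k_-$ and $k_+$ and fixes $c=\min(c_-,c_+)/\max(c_-,c_+)$ --- I would assume $0<c_-<c_+$, write $c=c_-/c_+\in(0,1)$, put $a=\cos(\alpha\pi)$ so that $\sin(\alpha\pi)=-\sqrt{1-a^2}$ (as $\alpha\pi\in(\pi,2\pi)$), and expand $\sin((\gamma+1)\pi)=\sin((u+\alpha)\pi)$ with the addition formula. A routine computation then gives $\sgn k_-=\sgn\big(\sin(u\pi)(1+ca)+\sqrt{1-a^2}\,(1-c\cos(u\pi))\big)$ and $\sgn k_+=\sgn h(u)$ with $h(u):=\sin(u\pi)(a+c)+\sqrt{1-a^2}\,(c-\cos(u\pi))$. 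The $k_-$ assertion is then immediate, since for $u\in(0,1)$ all of $\sin(u\pi)$, $\sqrt{1-a^2}$, $1+ca$ (because $|ca|<1$) and $1-c\cos(u\pi)$ (because $c|\cos(u\pi)|<1$) are strictly positive.

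For $k_+$ I would write $h(u)=R\sin(u\pi+\phi)+D$ with $R=\sqrt{(a+c)^2+(1-a^2)}=\sqrt{1+c^2+2ac}$ and $D=c\sqrt{1-a^2}\ge0$. The identity $R^2-D^2=(1+ca)^2>0$ gives $0\le D<R$, in particular $R>0$. At the endpoints $h(0^+)=\sqrt{1-a^2}\,(c-1)<0$ and $h(1^-)=\sqrt{1-a^2}\,(c+1)>0$, so $h$ changes sign an odd number of times on $(0,1)$. Every zero of $h$ is simple, because there $\cos^2(u\pi+\phi)=1-D^2/R^2>0$; and $h$ can have at most two zeros on $(0,1)$, since a shifted sine is unimodal on any interval of length $\pi$. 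Hence $h$ has exactly one zero $u^\ast\in(0,1)$, with $h<0$ on $(0,u^\ast)$ and $h>0$ on $(u^\ast,1)$; equivalently, with $\gamma^\ast:=u^\ast+\alpha-1\in(\alpha-1,\alpha)$, one has $k_+<0$ on $(\alpha-1,\gamma^\ast)$ and $k_+>0$ on $(\gamma^\ast,\alpha)$.

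It remains to check that $\gamma^\ast=\beta(a,c)$. For this I would return to the bracket governing $\sgn k_+$ and apply the sum-to-product identity to rewrite it as $2\cos\!\big(\tfrac{(\gamma+1)\pi}{2}\big)\big[c\sin\!\big(\tfrac{(\gamma+1)\pi}{2}-\alpha\pi\big)+\sin\!\big(\tfrac{(\gamma+1)\pi}{2}\big)\big]$. On $(\alpha-1,\alpha)$ one has $\tfrac{(\gamma+1)\pi}{2}\in(\tfrac{\pi}{2},\tfrac{3\pi}{2})$, where the cosine factor does not vanish, so $k_+=0$ is equivalent, with $\psi:=\tfrac{(\gamma+1)\pi}{2}$, to $\tan\psi=\tfrac{c\sin(\alpha\pi)}{1+c\cos(\alpha\pi)}=-t$, where $t:=\tfrac{c\sqrt{1-a^2}}{1+ca}>0$. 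The only solution of $\tan\psi=-t$ with $\tan\psi<0$ in that window is $\psi^\ast=\pi-\arctan t\in(\tfrac{\pi}{2},\pi)$, whence $\gamma^\ast=\tfrac{2}{\pi}\psi^\ast-1=1-\tfrac{2}{\pi}\arctan t$. Finally, the double-angle formula $\cos(2\arctan t)=\tfrac{1-t^2}{1+t^2}$ together with $t^2=\tfrac{c^2(1-a^2)}{(1+ca)^2}$ gives $\pi-2\arctan t=\arccos\!\big(\tfrac{c^2(1-a^2)-(1+ca)^2}{c^2(1-a^2)+(1+ca)^2}\big)$ (the principal branch being correct since $\pi-2\arctan t\in(0,\pi)$), so $\gamma^\ast=\tfrac{1}{\pi}(\pi-2\arctan t)=\beta(a,c)$. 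Since $t>0$ this gives $\beta(a,c)=1-\tfrac{2}{\pi}\arctan t<1$, and $\beta(a,c)\in(\alpha-1,\alpha)$ from $u^\ast\in(0,1)$, so $\beta(a,c)\in(\alpha-1,1)$; the case $c_+<c_-$ follows from the symmetry noted at the start.

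The step I expect to be the main obstacle is establishing that $k_+$ has exactly one sign change on $(\alpha-1,\alpha)$ --- that is, that $h$ has a single zero there --- which rests on the elementary but slightly delicate combination of the fact that $D<R$ forces every zero to be simple with the fact that a shifted sine has at most two zeros on a window of length $\pi$; and, alongside it, the branch-sensitive trigonometry that converts the $\arctan$-form of this zero into the $\arccos$-form in the definition of $\beta(a,c)$. The rest is routine verification.
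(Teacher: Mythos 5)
Your proof is correct and follows the same overall strategy as the paper's --- reduce the sign of $k_\pm$ to the sign of an explicit trigonometric bracket and locate its unique zero --- but it executes the two delicate steps differently and more completely. For $k_-$, the paper evaluates its auxiliary function $h_-$ at $\gamma=0$ and $\gamma=\alpha-1$ and appeals to the periodicity Lemma \ref{Lemma_periodicitySumSines} to conclude, somewhat loosely, that the only zero of $h_-$ in $(0,2)$ lies before $\alpha-1$; your term-by-term positivity of $\sin(u\pi)(1+ca)+\sqrt{1-a^2}\,(1-c\cos(u\pi))$ for $u=\gamma-\alpha+1\in(0,1)$ is shorter and airtight. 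For $k_+$, the paper again uses $h_+(0)=0$ plus periodicity to assert a single further zero and then cites Fournier's Lemma 9 to identify that zero with $\beta(a,c)$; you instead prove uniqueness directly (writing $h=R\sin(u\pi+\phi)+D$ with $0\le D<R$ since $R^2-D^2=(1+ac)^2$, so every zero is simple and a shifted sine takes each value at most twice on a window of length $\pi$, which together with the endpoint signs forces exactly one sign change) and then re-derive the $\arccos$ expression for $\beta(a,c)$ from the equation $\tan\psi=-t$ with the correct branch, making the argument self-contained where the paper outsources it. The one shared blind spot is the boundary case $c_-=0$, where $c=0$, $t=0$ and $\beta(a,c)=1$, so the interval $(\beta(a,c),1)$ degenerates; both your argument and the paper's implicitly assume $0<c_-<c_+$, as does Fournier.
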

\begin{proof}
Assume that $c_- < c_+$.
First, we prove $k_-\left( \alpha, \gamma, c_-, c_+ \right)>0$ for all $\gamma \in (\alpha-1,\alpha)$. Note that:
\bline
\begin{eqnarray*}
&& k_-\left( \alpha, \gamma, c_-, c_+ \right) = 
\frac{\Gamma(\gamma + 1)}{\Gamma(\gamma- \alpha+1)}\left[ M_+\frac{\sin\left(-\alpha \pi\right)}{\sin\left((\gamma-\alpha+1)\pi\right)} + M_- \frac{\sin\left((\gamma+1)\pi\right)}{\sin\left((\gamma-\alpha+1)\pi\right)} + M_+\right]\\
&&= 
\frac{\Gamma(\gamma + 1)M_+}{\Gamma(\gamma- \alpha+1)\sin\left((\gamma-\alpha+1)\pi\right)}\left[\sin\left(-\alpha \pi\right) + c \sin\left((\gamma+1)\pi\right) + \sin\left((\gamma-\alpha+1)\pi\right)\right]\\
&&= 
\frac{\Gamma(\gamma + 1)c_+\Gamma(-\alpha)}{\Gamma(\gamma- \alpha+1)\sin\left((\gamma-\alpha+1)\pi\right)}\left[\sin\left(-\alpha \pi\right) - c \sin\left(\gamma\pi\right) - \sin\left((\gamma-\alpha)\pi\right)\right].
\end{eqnarray*}
\eline
Since we have
\bline
\begin{equation*}
    \frac{\Gamma(\gamma + 1)c_+\Gamma(-\alpha)}{\Gamma(\gamma- \alpha+1)\sin\left((\gamma-\alpha+1)\pi\right)} > 0,
\end{equation*}
\eline
for all $\alpha \in (1,2)$ and $\gamma \in (\alpha-1,\alpha)$, then $k_-\left( \alpha, \gamma, c_-, c_+ \right)>0$ is equivalent to:
\bline
\begin{equation*}
    h_-(\gamma):=\sin\left(-\alpha \pi\right) - c \sin\left(\gamma\pi\right) - \sin\left((\gamma-\alpha)\pi\right) > 0,
\end{equation*}
\eline
for all $\gamma \in (\alpha-1,\alpha)$. 
Lemma \ref{Lemma_periodicitySumSines} tells us that $h_\pm$ are $2$-periodic. 
Moreover, we have that:
\bline
\begin{eqnarray*}
h_-(0) &=&\sin\left(-\alpha \pi\right) - \sin\left(-\alpha\pi\right) = 0,\\
h_-(\alpha-1) &=& \sin\left(-\alpha \pi\right) - c \sin\left((\alpha-1)\pi\right)\\
&=& \sin\left(-\alpha \pi\right)(1-c)\\
&>&0,
\end{eqnarray*}
\eline
because $c<1$ and $\alpha \in (1,2)$. This means that $h_-(\gamma)$ has just one zero in $(0,2)$  and it is before $\alpha-1$, so that $h(\gamma)>0$ for all $\gamma \in (\alpha-1,\alpha)$, as well as $k_-\left( \alpha, \gamma, c_-, c_+ \right)>0$ in the same interval. 

We will prove in a similar way the change of signs of $k_+\left( \alpha, \gamma, c_-, c_+ \right)$. Note that, as in the previous case, we just need to analyze the change of signs of the function:
\bline
\begin{equation*}
    h_+(\gamma):=c\sin\left(-\alpha \pi\right) - \sin\left(\gamma\pi\right) - c\sin\left((\gamma-\alpha)\pi\right).
\end{equation*}
\eline
Since \bline
\begin{equation*}
    h_+(0):=c\sin\left(-\alpha \pi\right) - c\sin\left(-\alpha\pi\right) = 0,
\end{equation*}
\eline
there must be just one zero in $(0,2\pi)$, this zero is precisely $\gamma = \beta(a,c)$ (\cite{MR3060151} Lemma 9). But, by definition $\beta(a,c) \in (\alpha-1,1)$, this means that:
\bline
\begin{eqnarray*}
    h_+(\gamma) &<& 0,\quad \text{if $\gamma \in (\alpha-1,\beta(a,c))$ and}\\
    h_+(\gamma) &\geq& 0,\quad \text{if $\gamma \in [\beta(a,c),1)$}.
\end{eqnarray*}
\eline
Finally, when $c_+ < c_-$, just note that since $k_+\left( \alpha, \gamma, c_-, c_+ \right) = k_-\left( \alpha, \gamma, c_+, c_- \right)$ we can use the same proof.
\end{proof}

We are ready to prove the power decomposition theorem. These results are a generalization of the works of Salminen and Yor \cite{MR2409011} and of Engelbert and Kurenok \cite{MR3943123}. The proof of the decomposition uses the Tanaka formula for asymmetric stable processes \eqref{TanakaFormula} and relies on the representation of the infinitesimal generator of a power function given in Lemma \ref{lemma: FracIntDelta}. Note that in \cite{MR2409011} it was easy to find the measure which could recover the power decomposition in the symmetric case and for the generalization we made direct use of fractional calculus to find the relevant measure needed for the asymmetric case.

\begin{proof}( of Theorem \ref{EngKur_generalization})
Recall that from Lemma \ref{lemma: FracIntDelta} we know that for $f(y)=|y|^{\gamma}$, the infinitesimal generator associated to $f$ is given by:
\bline
\begin{equation*}
    \mu(dy) = \left(k_-  \left|y\right|^{\gamma-\alpha}  \ii_{\{y>0\}} + k_+\left| y\right|^{\gamma-\alpha}  \ii_{\{0<y\}}\right)dy.
\end{equation*}
\eline
Taking the Tanaka formula \eqref{TanakaFormula} at the level $a$ and integrating both sides by $\mu^x(da)$ (the measure $\mu$ translated by $x$) we have:
\begin{equation*}
\int_{\infty}^{\infty}F\left(X_{t}-a \right) \mu^x(da) = \int_{\infty}^{\infty}F\left(X_{0}-a\right)\mu^x(da) + \int_{\infty}^{\infty}M_{t}^a(X) \mu^x(da)+ \int_{\infty}^{\infty}L_t^a(X)\mu^x(da).
\end{equation*}

Note that the representation of $f$ as a member of the Class $\ClassC$ is precisely $F*\mu$. 
We will now use a version of Fubini's theorem  for compensated Poisson random measures and apply it to the small jumps of $M^a(X)$ above. 
See \cite[Lemma A.1.2]{MR3243582}. 
We need to verify some integrability assumptions to apply it, which are 
\eqref{eq_FubiniEK} and \eqref{eq_FubiniEK2} below. 
Applying the Fubini theorem, we get
\bline
\begin{eqnarray*}
\left| X_t - x\right|^{\gamma} &=& \left| X_0 - x\right|^{\gamma} + \int_0^t \int_{\RR_0} \left[ \left| X_{s-} - x + h\right|^{\gamma} - \left| X_{s-} - x\right|^{\gamma}\right] \tilde{N}(ds,dh) \nonumber\\
&+&  \int_{-\infty}^{\infty} \left| a - x\right|^{\gamma-\alpha} \left[ k_-\ii_{\{a>x\}}  + k_+\ii_{\{a<x\}} \right] L_t^a da.
\end{eqnarray*}
\eline

Using the occupational formula for the local time, the last integral is equivalent to
\bline
\begin{equation*}
    \int_0^t \left| X_s - x\right|^{\gamma-\alpha} \left[ k_-\ii_{\{X_s>x\}}  + k_+\ii_{\{X_s<x\}} \right] ds.
\end{equation*}
\eline
This finishes the proof modulo showing that the first integral is a martingale and the applicability of Fubini's theorem. 
The proof of the martingale character will follow the ideas of \cite[Section 3]{MR3943123}. 
Incidentally, the same argument will justify the application of Fubini's theorem above. 
We can identify two cases depending on the size of the jump:
\bline
\begin{eqnarray*}
M_t^{\gamma} &=&\int_0^{t} \int_{\RR_0} \left[ \left| X_{s-} - x + h\right|^{\gamma} - \left| X_{s-} - x\right|^{\gamma}\right] \tilde{N}(ds,dh) \nonumber \\
&=& M^{\gamma,1}_t + M^{\gamma,2}_t\\
&:=&\int_0^{t} \int_{|h|\leq |X_{s-} - x |} \left[ \left| X_{s-} - x + h\right|^{\gamma} - \left| X_{s-} - x\right|^{\gamma}\right] \tilde{N}(ds,dh)\\
&&+\int_0^{t} \int_{|h|> |X_{s-} - x |} \left[ \left| X_{s-} - x + h\right|^{\gamma} - \left| X_{s-} - x\right|^{\gamma}\right] \tilde{N}(ds,dh). 
\end{eqnarray*}
\eline

In order to prove that $M^{1,\gamma}$ is a square integrable martingale, according to Ikeda and Watanabe (\cite{MR1011252} section II.3) we need to show that:
\bline
\begin{equation}
    m^{1,\gamma}_t:= \EE \left[  \int_{0}^t \int_{|h|\leq |X_{s-} - x |}  \left| \left| X_{s-} - x + h\right|^{\gamma} - \left| X_{s-} - x\right|^{\gamma} \right|^2 \nu(dh)ds  \right]  < \infty.
    \label{eq_FubiniEK}
\end{equation}
\eline
Take $\overline c=c_- \vee c_+$, then the intensity measure $\nu_{\overline c}(dh)=\overline c|h|^{-\alpha-1}dh$ is greater than the intensity measure $\nu(dh)$, corresponding to $X_t$, and if we consider the change of variable $h=(X_{s-} - x)u$ we have:
\bline
\begin{eqnarray*}
    m^{1,\gamma}_t &\leq& \EE \left[  \int_{0}^t \int_{|(X_{s-} - x)u|\leq |X_{s-} - x |} \frac{\overline c \left| \left| X_{s-} - x + (X_{s-} - x)u\right|^{\gamma} - \left| X_{s-} - x\right|^{\gamma} \right|^2 }{|X_{s-}-x|^{\alpha}|u|^{\alpha+1}} \quad du ds  \right] \\
    &=& \EE \left[  \int_{0}^t \int_{|u|\leq 1} |X_{s-}-x|^{2\gamma} \left( \left|1+u\right|^{\gamma} - 1 \right)^2  \frac{\overline c}{|X_{s-}-x|^{\alpha}|u|^{\alpha+1}}du ds  \right] \\
    &=& \EE \left[  \int_{0}^t |X_{s-}-x|^{2\gamma-\alpha}ds \right] \int_{|u|\leq 1} \left( \left|1+u\right|^{\gamma} - 1 \right)^2  \frac{\overline c}{|u|^{\alpha+1}}du.
\end{eqnarray*}
\eline
Since $-1<\alpha-2<2\gamma-\alpha<\alpha$, the integral $\displaystyle \EE \left[  \int_{0}^t |X_{s-}-x|^{2\gamma-\alpha}ds \right]$ is finite for all $t\geq 0$. It remains to check that the second integral is finite. Consider the auxiliary function $g(u) = |1+u|^{\gamma}$, and note that for any $u\in (-1,1)$ we have that $g(u)=(1+u)^{\gamma}$, which is differentiable. By the mean value theorem we can choose $u_*\in(-1,0)$ and $u^*\in(0,1)$ such that:
\bline
\begin{equation*}
    f(u)-f(0) = \begin{cases}
    f^{\prime}(u_*)u & -1<u<0,\\
    f^{\prime}(u^*)u & 0<u<1,
    \end{cases}
\end{equation*}
\eline
This corresponds to:
\bline
\begin{equation*}
    (1+u)^{\gamma}-1 = \begin{cases}
    \gamma (1+u_*)^{\gamma-1}u & -1<u<0,\\
    \gamma (1+u^*)^{\gamma-1}u & 0<u<1.
    \end{cases}
\end{equation*}
\eline
We get the following bound for any $u\in (-1,1)$:
\bline
\begin{equation*}
    |(1+u)^{\gamma}-1| \leq \gamma c_1(\gamma)|u|,
\end{equation*}
\eline
where $c_1(\gamma) = \max((1+u_*)^{\gamma-1},(1+u^*)^{\gamma-1})$. Then, we have that
\bline
\begin{eqnarray*}
\int_{|u|\leq 1} \left( \left|1+u\right|^{\gamma} - 1 \right)^2  \frac{\overline c}{|u|^{\alpha+1}}du &\leq& \gamma^2 c_1^2(\gamma) \int_{|u|\leq 1}  \overline c |u|^{1-\alpha} du\\
&\leq& \gamma^2 c_1^2(\gamma) \frac{2\overline c}{2-\alpha}\\
&<& \infty.
\end{eqnarray*}
\eline
So that $m^{1,\gamma}_t$ for any $t\geq 0$ and $M^{1,\gamma}$ is a square integrable martingale.

Now, to prove that $M^{2,\gamma}$ is a martingale, according to Ikeda and Watanabe (\cite{MR1011252} section II.3) we need to show that:
\bline
\begin{equation}
    m^{2,\gamma}_t:= \EE \left[  \int_{0}^t \int_{|h|> |X_{s-} - x |}  \left| \left| X_{s-} - x + h\right|^{\gamma} - \left| X_{s-} - x\right|^{\gamma} \right| \nu(dh)ds  \right]  < \infty.
    \label{eq_FubiniEK2}
\end{equation}
\eline
Similarly,  we have:
\bline
\begin{eqnarray*}
    m^{2,\gamma}_t &\leq& \EE \left[  \int_{0}^t \int_{|(X_{s-} - x)u|> |X_{s-} - x |} \frac{c \left| \left| X_{s-} - x + (X_{s-} - x)u\right|^{\gamma} - \left| X_{s-} - x\right|^{\gamma} \right|}{|X_{s-}-x|^{\alpha}|u|^{\alpha+1}} \quad du ds  \right] \\
    &=& \EE \left[  \int_{0}^t \int_{|u|> 1} |X_{s-}-x|^{\gamma} \left| \left|1+u\right|^{\gamma} - 1 \right|  \frac{c}{|X_{s-}-x|^{\alpha}|u|^{\alpha+1}}du ds  \right] \\
    &=& \EE \left[  \int_{0}^t |X_{s-}-x|^{\gamma-\alpha}ds \right] \int_{|u|> 1} \left| \left|1+u\right|^{\gamma} - 1 \right|  \frac{c}{|u|^{\alpha+1}}du\\
    &<&\infty,
\end{eqnarray*}
\eline
since $\gamma-\alpha \in (-1,0)$ and this moment of $X_t$ is finite for any $t\geq 0$, the expectation is finite. To see the last integral is finite, just note that $\left| \left|1+u\right|^{\gamma} - 1 \right| $ behaves like $|u|^{\gamma}$ as $|u|\to \infty$. Then, we have that $m^{2,\gamma}_t$ is finite for any $t\geq 0$ and we can conclude that $M^{2,\gamma}$ is a martingale. This allow us to conclude that $M^{\gamma} = M^{1,\gamma}+M^{2,\gamma}$ is a martingale.
\end{proof}

Finally, we state when this power decomposition is a submartingale or just a semimartingale.
\begin{proof}(of Corollary \ref{cor:Semi-DM})
From the Lemma \ref{lemma: const_Fournier} and Theorem \ref{EngKur_generalization} the last integral is a non decreasing process if and only if $\gamma \in [\beta(a,c),\alpha)$, by Lemma \ref{lemma: const_Fournier}, so that we get a Doob-Meyer decomposition for $|X_t -x|^{\gamma}$. In the other case, $\gamma \in (\alpha-1,\beta(a,c))$, this results in a semimartingale instead of a submartingale.
\end{proof}

\appendix
\section{Trigonometric results}\label{App:trig_results}
The following trigonometric result is used in the proof of the composition of crossed fractional operators.
\begin{lemma}\label{lemma:trig_id1}
Let $\lambda, \mu \in \RR$, then the following identity holds
\bline
\begin{equation*}
\cos \left( \left(\lambda - \mu \right) \frac{\pi}{2}\right) \sin \left( \left(\lambda + \mu \right) \frac{\pi}{2}\right) =  \sin \left( \mu \frac{\pi}{2} \right) \cos \left( \mu \frac{\pi}{2} \right)  + \sin \left( \lambda \frac{\pi}{2} \right)  \cos \left( \lambda \frac{\pi}{2} \right).
\end{equation*}
\eline
\end{lemma}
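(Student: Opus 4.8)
The plan is to reduce the identity to a single application of the product-to-sum (equivalently, the sine addition) formula. First I would abbreviate $a:=\lambda\pi/2$ and $b:=\mu\pi/2$, so that the claimed identity becomes
\bline
\begin{equation*}
\cos(a-b)\sin(a+b)=\sin b\cos b+\sin a\cos a.
\end{equation*}
\eline
The right-hand side is immediately recognizable, via the double-angle formula $\sin(2\theta)=2\sin\theta\cos\theta$, as $\tfrac12\sin(2a)+\tfrac12\sin(2b)$.

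For the left-hand side I would apply the product-to-sum identity $\cos X\sin Y=\tfrac12\big(\sin(X+Y)-\sin(X-Y)\big)$ with $X=a-b$ and $Y=a+b$. Since $X+Y=2a$ and $X-Y=-2b$, this gives $\cos(a-b)\sin(a+b)=\tfrac12\big(\sin(2a)-\sin(-2b)\big)=\tfrac12\sin(2a)+\tfrac12\sin(2b)$, which matches the right-hand side. Substituting back $a=\lambda\pi/2$, $b=\mu\pi/2$ completes the proof.

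There is no real obstacle here: the statement is an elementary trigonometric identity valid for all real $\lambda,\mu$, and the only ``choice'' is which standard identity to invoke (product-to-sum, or expanding $\cos(a-b)$ and $\sin(a+b)$ by the addition formulas and simplifying the cross terms — both routes take two lines). I would present the product-to-sum route as the cleanest.
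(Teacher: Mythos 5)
Your proof is correct. You take a slightly different route from the paper: you invoke the product-to-sum identity $\cos X\sin Y=\tfrac12\bigl(\sin(X+Y)-\sin(X-Y)\bigr)$ with $X=a-b$, $Y=a+b$ to collapse the left-hand side to $\tfrac12\sin(2a)+\tfrac12\sin(2b)$, and then recognize the right-hand side via the double-angle formula. The paper instead expands $\cos(a-b)$ and $\sin(a+b)$ by the addition formulas, multiplies out the four resulting terms, and regroups them using $\cos^2\theta+\sin^2\theta=1$. Both arguments are complete and elementary; yours is shorter on the page because the product-to-sum formula packages the cancellation of cross terms that the paper carries out by hand, while the paper's version has the minor advantage of using only the two addition formulas and the Pythagorean identity. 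Nothing is missing in either case.
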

\begin{proof}
Using the trigonometric identities for the sum of angles we start from the LHS:
\bline
\begin{eqnarray*}
&&\left[ \cos \left( \lambda \frac{\pi}{2} \right) \cos \left( \mu \frac{\pi}{2} \right)+ \sin \left( \lambda \frac{\pi}{2} \right) \sin \left( \mu \frac{\pi}{2} \right) \right] \left[ \sin \left( \lambda \frac{\pi}{2} \right) \cos \left( \mu \frac{\pi}{2} \right)+ \cos \left( \lambda \frac{\pi}{2} \right) \sin \left( \mu \frac{\pi}{2} \right) \right]\\
&&= \cos \left( \lambda \frac{\pi}{2} \right) \sin \left( \lambda \frac{\pi}{2} \right) \cos^2 \left( \mu \frac{\pi}{2} \right) + \cos \left( \mu \frac{\pi}{2} \right) \sin \left( \mu \frac{\pi}{2} \right) \cos^2 \left( \lambda \frac{\pi}{2} \right)\\
&&+ \sin \left( \mu \frac{\pi}{2} \right) \cos \left( \mu \frac{\pi}{2} \right) \sin^2 \left( \lambda \frac{\pi}{2} \right) + \sin \left( \lambda \frac{\pi}{2} \right) \cos \left( \lambda \frac{\pi}{2} \right) \sin^2 \left( \mu \frac{\pi}{2} \right)\\
&&= \sin \left( \mu \frac{\pi}{2} \right) \cos \left( \mu \frac{\pi}{2} \right) \left[ \cos^2 \left( \lambda \frac{\pi}{2} \right) + \sin^2 \left( \lambda \frac{\pi}{2} \right) \right] \\
&&+ \sin \left( \lambda \frac{\pi}{2} \right) \cos \left( \lambda \frac{\pi}{2} \right) \left[ \cos^2 \left( \mu \frac{\pi}{2} \right) + \sin^2 \left( \mu \frac{\pi}{2} \right) \right] \\
&& = \sin \left( \mu \frac{\pi}{2} \right) \cos \left( \mu \frac{\pi}{2} \right) + \sin \left( \lambda \frac{\pi}{2} \right) \cos \left( \lambda \frac{\pi}{2} \right).
\end{eqnarray*}
\eline
\end{proof}
The following lemma is used to analyze the constant $\beta(\alpha, c)$ in Theorem \ref{cor:Semi-DM}. 
\begin{lemma}
The functions $h_{\pm}$ of Lemma \ref{lemma: const_Fournier} have minimum period $2$. 
\label{Lemma_periodicitySumSines}
\end{lemma}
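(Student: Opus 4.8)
The plan is to put each of $h_+$ and $h_-$ from the proof of Lemma \ref{lemma: const_Fournier} into the normal form ``constant plus a sinusoid of angular frequency $\pi$'', and then read off the fundamental period from that form. Recall $a=\cos(\pi\alpha)$ and $c=\min(c_-,c_+)/\max(c_-,c_+)\in[0,1]$.

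First I would apply the addition formula $\sin((\gamma-\alpha)\pi)=\sin(\pi\gamma)\cos(\pi\alpha)-\cos(\pi\gamma)\sin(\pi\alpha)$ together with $\sin(-\pi\alpha)=-\sin(\pi\alpha)$ to the definitions of $h_\pm$, obtaining
\begin{align*}
h_-(\gamma)&=-\sin(\pi\alpha)-(c+\cos(\pi\alpha))\sin(\pi\gamma)+\sin(\pi\alpha)\cos(\pi\gamma),\\
h_+(\gamma)&=-c\sin(\pi\alpha)-(1+c\cos(\pi\alpha))\sin(\pi\gamma)+c\sin(\pi\alpha)\cos(\pi\gamma).
\end{align*}
Thus $h_\pm(\gamma)=D_\pm+A_\pm\sin(\pi\gamma)+B_\pm\cos(\pi\gamma)$ for constants $D_\pm,A_\pm,B_\pm$ depending only on $\alpha$ and $c$. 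A short computation gives in both cases $A_\pm^2+B_\pm^2=1+2c\cos(\pi\alpha)+c^2=|1+ce^{i\pi\alpha}|^2$, and this is strictly positive: it would vanish only if $c=1$ and $e^{i\pi\alpha}=-1$, i.e. if $\alpha$ were an odd integer, which is excluded since $\alpha\in(1,2)$. Hence there are amplitudes $R_\pm>0$ and phases $\varphi_\pm$ with $h_\pm(\gamma)=D_\pm+R_\pm\sin(\pi\gamma+\varphi_\pm)$.

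Finally I would establish minimality. That $2$ is a period is immediate from the $2$-periodicity of $\gamma\mapsto\sin(\pi\gamma+\varphi_\pm)$. Conversely, if $T>0$ is any period of $h_\pm$, then subtracting $D_\pm$ and dividing by the nonzero factor $R_\pm$ yields $\sin(\pi\gamma+\pi T+\varphi_\pm)=\sin(\pi\gamma+\varphi_\pm)$ for every $\gamma\in\RR$; since $\sin$ has fundamental period $2\pi$ this forces $\pi T\in2\pi\ZZ$, hence $T\in2\ZZ$. So the smallest positive period of each $h_\pm$ is exactly $2$. There is essentially no obstacle in this argument; the only point needing care — and the reason to record the identity $A_\pm^2+B_\pm^2=|1+ce^{i\pi\alpha}|^2>0$ — is to ensure the amplitude $R_\pm$ does not degenerate to $0$, which would collapse $h_\pm$ to a constant and destroy the statement.
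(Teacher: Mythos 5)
Your proof is correct and follows essentially the same route as the paper's: both reduce the claim to the fact that a nontrivial combination $A\sin(\pi\gamma)+B\cos(\pi\gamma)$ (plus a constant) has minimal period $2$, the paper via the ODE $f''+f=0$ and matching of initial conditions, you via the amplitude--phase form. If anything you are more careful on the one point that matters, namely verifying that the amplitude $A_\pm^2+B_\pm^2=1+2c\cos(\pi\alpha)+c^2=|1+ce^{i\pi\alpha}|^2$ cannot vanish for $\alpha\in(1,2)$ and $c\in[0,1]$ --- a non-degeneracy check the paper's proof assumes (``as long as $a$ and $b$ are not both zero'') but never actually carries out for $h_\pm$.
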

\begin{proof}
Let $f_{\pm}(x)=h_{\pm}(x/2\pi)$, so that we now wish to prove that the minimum period of $f_\pm$ is $2\pi$. 
First, note that $f_{\pm}$ is a solution to $f''+f=0$. 
Second, all solutions to the above ODE are given by $a\cos+b\sin$. 
Finally, we assert that the minimum period of the above linear combination is $2\pi$ as long as  $a$ and $b$ are not both zero. 
Let us assume that $a\neq0$. 
If $\tilde f_{\pm}(x)=f_\pm(x+p)$ for some $p$, by equating initial conditions at zero, we obtain
\[
	a=a\cos p+b\sin p\quad\text{and}\quad b=-a\sin p+b\cos p	. 
\]By substituting the value for $b$ obtained in the second equation in the first and cancelling $a$, since it is non-zero, we get
\[
	1-\cos^2p=\sin^2(p)=(1-\cos p)^2. 
\]Expanding the square, we get
\[
	\cos p=\cos^2 p
\]from which $p=2k\pi$. 
The case when $b\neq 0$ is handled similarly. 
\end{proof}
\bibliography{general_math.bib}
\bibliographystyle{amsalpha}
\end{document}